\documentclass[english,reqno]{amsart}

\usepackage{microtype}
\usepackage{babel}
\usepackage[T1]{fontenc}
\usepackage[utf8]{inputenc}
\usepackage{lmodern}
\usepackage{csquotes}
\usepackage{mathtools}
    \numberwithin{equation}{section}
    \mathtoolsset{showonlyrefs}
\usepackage{amsthm}
\usepackage{amssymb}
\usepackage[mathscr]{euscript}
\usepackage[hidelinks]{hyperref}
\usepackage[giveninits=false,
    url=false,
    style=alphabetic,
    backref=true,
    isbn=false]{biblatex}
	\addbibresource{refs.bib}
\usepackage{zref-clever}
    \zcsetup{abbrev=true}
\usepackage{tikz}
\tikzset{
    cell/.style={
        anchor=south west,
        draw,
        minimum size=1cm,
    },
}
\usepackage[all]{xy}

\setcounter{tocdepth}{1}

\theoremstyle{plain}
\newtheorem{lemma}{Lemma}[section]
\newtheorem{theorem}[lemma]{Theorem}
\newtheorem{conjecture}[lemma]{Conjecture}
\newtheorem{corollary}[lemma]{Corollary}
\newtheorem{proposition}[lemma]{Proposition}

\theoremstyle{remark}
\newtheorem{remark}[lemma]{Remark}

\theoremstyle{definition}
\newtheorem{definition}[lemma]{Definition}

\AddToHook{env/theorem/begin}{%
    \zcsetup{countertype={lemma=theorem}}}
\AddToHook{env/conjecture/begin}{%
    \zcsetup{countertype={lemma=conjecture}}}
\AddToHook{env/corollary/begin}{%
    \zcsetup{countertype={lemma=corollary}}}
\AddToHook{env/proposition/begin}{%
    \zcsetup{countertype={lemma=proposition}}}
\AddToHook{env/remark/begin}{%
    \zcsetup{countertype={lemma=remark}}}
\AddToHook{env/definition/begin}{%
    \zcsetup{countertype={lemma=definition}}}
\zcRefTypeSetup{conjecture}{
Name-sg = Conjecture ,
name-sg = conjecture ,
Name-pl = Conjectures ,
name-pl = conjectures ,
}

\newcommand\coh{\mathrm{Coh}}
\newcommand\LL{\mathsf{L}} 
\newcommand\tl{\tilde{\lambda}}
\newcommand\ep{\epsilon}
\newcommand\al{\alpha}
\newcommand\q{T} 

\newcommand\tn{{\tilde{n}}}
\newcommand\tb{\tilde{b}}
\newcommand\tK{\tilde{K}}
\newcommand\anew{\tilde{b}'}

\newcommand\cO{\mathcal{O}}
\newcommand\cE{\mathcal{E}}

\newcommand\calN{{\mathcal N}}

\newcommand{\calE}{{\mathcal E}}
\newcommand{\calF}{{\mathcal F}}
\newcommand{\calO}{{\mathcal O}}

\newcommand{\calH}{{\mathcal H}}
\newcommand{\calL}{{\mathcal L}}
\newcommand{\calI}{{\mathcal I}}

\newcommand{\calQ}{{\mathcal Q}}
\newcommand{\calC}{{\mathcal C}}
\newcommand{\calG}{{\mathcal G}}
\newcommand{\calK}{{\mathcal K}}
\newcommand{\calB}{{\mathcal B}}
\newcommand{\calT}{{\mathcal T}}
\newcommand{\mred}{\mathcal P} 
\newcommand\mcou{\mathcal M} 

\newcommand\BC{\mathbb{C}}
\newcommand\BZ{\mathbb{Z}}
\newcommand\BP{\mathbb{P}}
\newcommand\BR{\mathbb{R}}
\newcommand{\IQ}{{\mathbb Q}}
\newcommand{\IC}{{\mathbb C}}
\newcommand{\IZ}{{\mathbb Z}}
\newcommand{\IP}{{\mathbb P}}
\newcommand{\IA}{{\mathbb A}}

\newcommand{\scrF}{{\mathscr F}}

\newcommand{\PT}{\mathrm{PT}}

\newcommand\eu{\mathrm{e}} 
\newcommand{\grp}[1]{\mathrm{#1}} 
\newcommand\br[1]{[{#1}]}

\newcommand\qq{\mathfrak{q}} 

\newcommand\QQ{\mathsf{Q}} 

\DeclareMathOperator\ch{ch}
\DeclareMathOperator\td{td}
\DeclareMathOperator\rk{rk}
\DeclareMathOperator\tr{tr}
\DeclareMathOperator\vol{vol}
\DeclareMathOperator\End{End}
\DeclareMathOperator\Hom{Hom}

\DeclareMathOperator\cok{cok}

\newcommand{\be}{\begin{equation}}
\newcommand{\ee}{\end{equation}}
\newcommand\T{\rule{0pt}{2.6ex}}

\title[Quasimaps of surfaces]
{Coulomb branch localization, quasimaps,
and surface counting in Calabi--Yau fourfolds}
\author{D.-E. Diaconescu \and N. Piazzalunga}

\begin{document}

\begin{abstract}
\noindent
We present a string theoretic approach to surface counting in local
Calabi--Yau fourfolds via supersymmetric localization in topologically
twisted four-dimensional gauge theories. This approach is based on a
spectral correspondence between ${\rm PT}_1$-stable pairs on local
fourfolds and twisted quasimaps with fixed two-dimensional domain
associated to the ADHM quiver, or, equivalently,  ADHM sheaves.
For local toric fourfolds, we derive a conjectural residue formula for
the $K$-theoretic quasimap partition function via Coulomb branch localization.
As a result, in this case, we obtain a conjectural prescription fixing all
usual sign ambiguities in the equivariant computation of such invariants.
We present some explicit computations for local $\BP^2$, extending
the results available in the literature, and describe the formalism in
general. This is the first instance of Coulomb branch localization for
a quasimap theory in the context of four-dimensional gauge theories.

\end{abstract}

\maketitle
\tableofcontents

\section{Introduction}

The study of Donaldson--Uhlenbeck--Yau (DUY) equations on four-manifolds
and their higher-dimensional generalizations has been a fruitful avenue
of research both in geometry and physics.  Physically, they control
the non-perturbative dynamics of supersymmetric gauge theories, while
mathematically the moduli spaces of solutions can be used to define
enumerative invariants in various dimensions.  However, it is the
interface of the two disciplines where many striking results have been
achieved, often within the framework of string theory.

We will be concerned here with a generalization of the DUY equations
occurring naturally in the Donaldson--Thomas (DT) theory of Calabi--Yau
fourfolds \cite{Corrigan:1982th, Donaldson:1996kp}. More precisely we will
study the Pandharipande--Thomas (PT) stable pair theory of Calabi--Yau
fourfolds \cite{St_pairs_fourfolds,Counting_surfaces_I}.  This amounts to
counting stable pairs $(\calF,s)$ consisting of a compactly supported
purely two-dimensional sheaf $\calF$ and a generically surjective
section. This is called $\PT_1$-theory \cite{Counting_surfaces_I}
since the cokernel of $s$ is required to be at most one-dimensional.
From a physical viewpoint, we are counting D8-D4-D2-D0 configurations
\cite{Nekrasov:2017cih, Nekrasov:2018xsb} via
supersymmetric indices in the associated eight-dimensional supersymmetric
gauge theory.  Although this is not the focus of the present paper,
it should be noted that the relation between DT and $\PT_1$ theory is
not presently understood in detail for Calabi--Yau fourfolds.

The present paper showcases a novel Coulomb branch localization approach
to the $\PT_1$ theory of local Calabi--Yau fourfolds fibered over compact
complex surfaces. In a nutshell, the relation between fourfold stable
pair invariants and Coulomb branch partition functions is encoded in
\zcref{fig:flow-chart}.  The main outcome of the displayed logical tree
is a conjectural expression for the $K$-theoretic $\PT_1$ partition
function of local toric Calabi--Yau fourfolds in terms of meromorphic
integrals on the Coulomb branch of an ADHM gauge theory. We will refer
to this relation as the $\PT_1$/Coulomb correspondence.  We emphasize
that the Coulomb branch partition function is not obtained by direct
equivariant localization computations on moduli spaces; it provides
instead an alternative residue formula derived through a gauge theory
construction. An important feature of this correspondence is that it
also provides an intrinsic prescription fixing the usual sign ambiguities
present in localization computations for Calabi--Yau fourfolds.  Below we
will explain the main steps in \zcref{fig:flow-chart} in more detail.

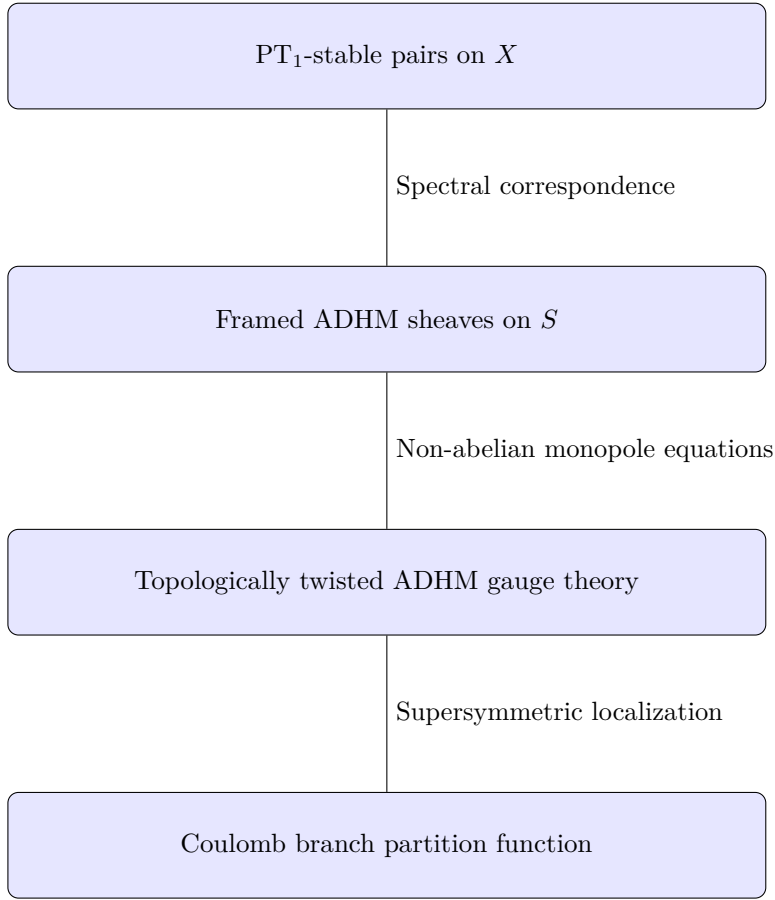
\begin{figure}[htb]
\centering
\tikzstyle{block} = [rectangle, draw, fill=blue!10,
 text width=28em, text centered, rounded corners, minimum height=4em]
\tikzstyle{line} = [draw]
\begin{tikzpicture}[node distance = 3.5cm, auto]
    \node [block] (init) {$\PT_1$-stable pairs on $X$};
    \node [block, below of=init] (identify) {Framed ADHM sheaves on $S$};
    \node [block, below of=identify] (evaluate) {Topologically twisted ADHM gauge theory};
    \node [block, below of=evaluate] (decide) {Coulomb branch partition function};
    \path [line] (init) -- node[right] {Spectral correspondence}node [left] {}(identify);
    \path [line] (identify) -- node {Non-abelian monopole equations } (evaluate);
    \path [line] (evaluate) --node {Supersymmetric localization } (decide) ;
\end{tikzpicture}
\caption{ $\PT_1$/Coulomb correspondence.}
\label{fig:flow-chart}
\end{figure}

\subsection{The setup}\label{sec:setup}
For us, a local Calabi--Yau fourfold $X$ will be the total space of a
direct sum of holomorphic line bundles $\LL_1 \oplus \LL_2$ over a smooth
projective surface $S$ so that $\LL_1 \otimes \LL_2 \simeq \omega_S$.
Here $\omega_S = \calO_S(K_S)$ denotes the dualizing sheaf of $S$.

It will be often convenient to regard compactly supported coherent
sheaves on $X$ as coherent sheaves on its natural projective bundle
completion, $Y = \BP (\calO_S \oplus \LL_1 \oplus \LL_2)$.  Note that
coherent sheaves will be sometimes referred to simply as sheaves, since
all sheaves considered in this paper will be coherent.

Unless otherwise stated, we will assume the vanishing condition
$H^1(S,\IQ)=0$.  As shown in \zcref{sec:topinv}, if this is the case,
the Chern classes of any nonzero two-dimensional sheaf $\calF$ on $Y$
with compact support contained in $X$ are given by 
\[
\ch_2(\calF)= r \sigma_*[S] \qquad 
\ch_3(\calF) = \sigma_*(\beta) 
\]
for a unique  pair $(r,\beta) \in \IZ \times H_2(S;Q)$ with $r \geq
1$. Here $[S]\in H_4(S)$ is the fundamental cycle, and $\sigma: S \to Y$
is the unique section of $Y$ over $S$ that factors through the zero
section of $X$. Moreover Chern classes of sheaves on $Y$ are regarded
as homology classes.  We will use the notation
\[
v(\calF) \coloneq (r, \beta, n)\in \IZ_{\geq 1} \oplus H_2(S) \oplus \IZ.
\]

For fixed topological invariants, one can construct
\cite{St_pairs_fourfolds, Counting_surfaces_I} a fine quasi-projective
moduli space $SP_1(X;v)$ parametrizing $\PT_1$-stable pairs $(\calF,
s)$ on $X$ with compact support.
Moreover, as shown in loc.~cit.\ this moduli space admits an isotropic
perfect obstruction theory \cite{OT-isotropic}. In particular, the
moduli space admits a virtual cycle, as well as a virtual structure
sheaf ${\calO}_{SP_1(X,v)}^{vir}$. We will denote by
\[ 
SP_1(X) = \sqcup_{v} SP_1(X;v)
\]
the disjoint union over all topological invariants.  We also note
that the moduli space $SP_1(X)$ admits a natural derived enhancement
$\mathbf{SP}_1(X)$ induced by the derived structure on the moduli space
of objects in $D^b(Y)$.

In order to address compactness issues, we will work equivariantly with
respect to a torus action $T \times X \to X$ acting linearly on the
fibers of the projection $\pi:X\to S$. This yields a $T$-action on the
moduli space $SP_1(X; v)$ with compact fixed locus. Therefore one can
define virtual $K$-theoretic invariants
\be \label{eq:PTinvA}
\PT_1^K(X;v) = \chi(\widehat \calO_{SP_1(X,v)}^{vir}
 \otimes {\hat \Lambda}_M^\bullet(Rp_*\scrF))
\ee
by equivariant localization, where $\scrF$ is the universal sheaf over
$SP_1(X)\times S$ and $p: SP_1(X)\times S \to SP_1(X)$ the canonical
projection.  The hat symbol means the usual twist by the square root of the
determinant bundle, converting the index of Dolbeault operator to Dirac.
For fixed $r\geq 1$, we will denote by
\be\label{eq:PTpartfctA} 
Z_r^{\PT_1} (X;\QQ,\qq)=
\sum_{(\beta, n)\in H_2(S) \times \IZ}
\PT_1^K(X;r,\beta,n) \QQ^\beta \qq^n 
\ee
the associated partition function.

Note that the fiberwise anti-diagonal action $\IC^\times \times X \to
X$ provides an example of a Calabi--Yau torus action for any surface
$S$. In this case, the fixed locus coincides with the zero section.
In specific examples, one can construct higher-rank Calabi--Yau torus
actions on $X$ provided that the surface $S$ admits a nontrivial torus
action $T_S \times S \to S$ so that the line bundles $\LL_1$, $\LL_2$
have equivariant structure.  In particular, this is the case if
$S$ is a toric surface as in \zcref{sec:4d}.

The main goal of this paper is to gain insight in the structure of
the partition function \eqref{eq:PTpartfctA} exploiting its physical
interpretation as a counting function for supersymmetric D8-D4-D2-D0
bound states.  On general grounds, we expect the above index to be
encoded in the low-energy effective action of D4-branes supported on
the zero section, which is a topologically twisted ADHM gauge theory.
From a mathematical perspective, the connection between stable pair
theory and gauge theory is made rigorous through ADHM sheaves on $S$,
by analogy with the local threefold case \cite{ADHMsheaves}.

Note that a similar approach to local curves in Calabi--Yau fourfolds
was developed \cite{Qmaps_potential} using a quasimap theory associated
to quivers with potential, which corresponds to certain 3d gauge
theories \cite{One_leg_vertex}.  Furthermore, the relation between Coulomb
branch partition functions and enumerative quasimap invariants was
studied \cite{3d_qmaps} in the framework of three-dimensional gauge
theories. As explained in more detail in \zcref{sec:PTCcorresp}, the
main novelty in four-dimensional gauge theories is that the Coulomb
branch partition function includes instanton effects.

\subsection{ADHM sheaves and spectral correspondence}

In this paper, a (rank one) framed ADHM sheaf on $S$ will be defined by
the data $(\calE, \Phi_1, \Phi_2, \phi)$ where
\begin{itemize}
\item $\calE$ is a torsion-free coherent sheaf on $S$, and
\item $\Phi_i: \calE \to \calE \otimes \LL_i$, $1\leq i\leq 2$, and $\phi:
\calO_S\to \calE$ are morphisms of coherent sheaves on $S$ satisfying
the relation
\[
(\Phi_1 \otimes \mathbf{1}_{\LL_2}) \circ \Phi_2
- (\Phi_2\otimes \mathbf{1}_{\LL_1}) \circ \Phi_1 =0.
\]
\end{itemize}
In addition, we will impose a stability condition that requires the data
$(\calE, \Phi_1, \Phi_2, \phi)$ to be generically cyclic.  A framed ADHM
sheaf will be called locally free if $\calE$ is locally free.

\begin{remark}
Note that the generic stability condition exhibits ADHM sheaves as
generalization of twisted quasimaps with fixed two-dimensional domain.
In comparison to one-dimensional domain, a distinguishing feature of the
present construction is the presence of points on $S$ where the underlying
torsion-free sheaf $\calE$ is not locally free.  This is expected to lead
to new geometric phenomena, especially for quasimaps with moving domain,
which will be discussed elsewhere.
\end{remark}

Using a relative variant of Beilinson's theorem (\zcref{thm:tilting})
\zcref{lem:stpairsX} provides a one-to-one correspondence between
$\PT_1$ stable pairs on $S$ with compact support and framed ADHM
sheaf on $S$. This is a generalization of the well-known spectral
correspondence for Higgs sheaves on curves, as well as the variant proven
\cite{TT_VW} for local surfaces in threefolds.  Then the main result
of \zcref{sect:SP_ADHM} (cf.\ \zcref{thm:triplesADHM,cor:pairsADHM}) reads
\begin{theorem} \label{thm:mainA}
The derived algebraic space parametrizing $\PT_1$ stable pairs on
$X$ is naturally identified with a derived moduli space of framed ADHM
sheaves on $S$ via spectral correspondence.
\end{theorem}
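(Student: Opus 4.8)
The plan is to upgrade the set-theoretic spectral correspondence of \zcref{lem:stpairsX} to an equivalence of derived moduli stacks, by identifying both sides with the same derived mapping stack. Concretely, a $\PT_1$-stable pair $(\calF,s)$ on $X$ is the same as a pair on the projective completion $Y=\BP(\calO_S\oplus\LL_1\oplus\LL_2)$ with compact support contained in $X$, hence an object of $D^b(Y)$; the derived enhancement $\mathbf{SP}_1(X)$ is by definition the restriction of the derived stack $\mathbf{Perf}(Y)$ (or the stack of objects in $D^b(Y)$ in the sense of Toën--Vaquié) to the open substack of such stable pairs. The first step is therefore to set up, in parallel, the derived moduli stack $\mathbf{ADHM}(S)$ of framed ADHM sheaves: a torsion-free $\calE$ on $S$ together with $\Phi_i\colon\calE\to\calE\otimes\LL_i$, $\phi\colon\calO_S\to\calE$ satisfying the commutator relation and the generic cyclicity condition. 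This is the derived zero locus, inside the smooth derived stack of quadruples $(\calE,\Phi_1,\Phi_2,\phi)$, of the section given by the commutator $[\Phi_1,\Phi_2]\in\Hom(\calE,\calE\otimes\omega_S)$; the open generic-cyclicity condition cuts out an honest (derived) algebraic space rather than a stack, matching the fineness of $SP_1(X;v)$.

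The heart of the argument is to promote \zcref{thm:tilting} — the relative Beilinson/tilting statement — to a derived statement. The relative Beilinson resolution exhibits an equivalence of $D^b$ of the relevant ruled geometry with the derived category of modules over a sheaf of algebras $\calA$ on $S$ built from the ADHM quiver (with the relation imposed via $\omega_S$-twisted arrows). I would phrase this as an $S$-linear equivalence $D^b_{\mathrm{cpt}/X}(Y)\simeq D^b(\mathrm{mod}\text{-}\calA)$ that is moreover \emph{functorial in test schemes}, i.e.\ compatible with derived base change $T\to S$ for arbitrary (animated) $S$-schemes $T$. Given such a family version, the induced map on derived moduli stacks of objects is automatically an equivalence: it is an equivalence on $T$-points for every $T$ because it is so fiberwise and compatibly, and it identifies cotangent complexes because the equivalence of categories identifies $\mathrm{Ext}$-groups (and the full $\mathrm{RHom}$-complexes) of the corresponding objects. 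Restricting to the open locus where the object on the $Y$-side is a $\PT_1$-stable pair, which on the $\calA$-side is exactly the locally-cyclic/framed ADHM condition by \zcref{lem:stpairsX}, one gets the claimed identification $\mathbf{SP}_1(X)\xrightarrow{\ \sim\}\mathbf{ADHM}(S)$, refining \zcref{thm:triplesADHM,cor:pairsADHM}.

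A few points need care. First, the tilting object must be chosen so that its relative $\mathrm{RHom}$-complex is a \emph{perfect} complex of $\calA$-modules flat in the appropriate sense, so that the equivalence is literally given by a Fourier--Mukai kernel on $Y\times_S(\text{something})$ and hence commutes with arbitrary derived base change; this is where one uses that the fibers of $Y\to S$ are $\BP^2$ and Beilinson's resolution is ``universal''. Second, one must check that the equivalence is $t$-exact enough (or at least that it is an isomorphism on the relevant truncations) so that a compactly-supported purely two-dimensional sheaf with a section corresponds precisely to a quadruple $(\calE,\Phi_1,\Phi_2,\phi)$ with $\calE$ an honest torsion-free sheaf in degree $0$, rather than to a genuinely derived complex; this heart-comparison is exactly the content of the underived \zcref{lem:stpairsX}, but one should verify it is stable under taking families, which amounts to flatness of $\calE$ over the base together with torsion-freeness on fibers. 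Third, the commutator relation on the quiver side must be matched with the condition ``$\calF$ is a sheaf on $X$'' (as opposed to on the formal/ambient space), so that the derived zero locus of $[\Phi_1,\Phi_2]$ reproduces the derived structure coming from $D^b(Y)$ — this is the derived refinement of the relation $\LL_1\otimes\LL_2\simeq\omega_S$ and is what makes the obstruction theories match.

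I expect the main obstacle to be the \textbf{base-change/functoriality of the tilting equivalence}: making precise that the relative Beilinson equivalence is not merely pointwise over $S$ but upgrades to an equivalence of $S$-linear stable $\infty$-categories that is preserved by arbitrary derived pullback, and then bootstrapping this to an equivalence of the \emph{derived} moduli functors (not just their classical truncations). The classical statement is essentially \zcref{thm:triplesADHM}; the genuinely new input here is the derived/stacky comparison, i.e.\ matching the two natural derived structures — one from $\mathbf{Perf}(Y)$, one from the derived critical/zero locus of the commutator — and this is where a Fourier--Mukai-kernel description, together with the Toën--Vaquié representability of the stack of perfect objects, does the work.
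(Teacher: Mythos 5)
Your strategy coincides with the paper's at its core: both rest on the relative Beilinson tilting equivalence $\tau\colon D^b(Y)\to D^b(\calB^{\sf op}\text{-}\mathrm{mod})$, its compatibility with families so that it induces a morphism of derived moduli stacks of objects, and the heart-level spectral correspondence identifying compactly supported purely two-dimensional sheaves on $X$ with torsion-free Higgs sheaves on $S$. The genuine divergence is in how the derived structure on the ADHM side is produced. You propose an intrinsic model --- the derived zero locus of the commutator section inside a smooth stack of quadruples --- and then flag, as the main remaining task, the comparison of this derived structure with the one inherited from $\mathbf{Perf}(Y)$. The paper sidesteps that comparison entirely: it takes the target to be the derived moduli stack $\mathbf{Perf}(\calB^{\sf op}\text{-}\mathrm{mod})$ of complexes of modules over the tilting algebra, shows the tilting functor maps $\mathbf{ST}_1(Y)$ isomorphically onto a derived substack $\mathbf{ADHM}(S,\LL_1,\LL_2)$ whose closed points are stable ADHM sheaves, and then \emph{defines} the framed derived moduli space as the fiber of the forgetful map $\varphi\colon\mathbf{ADHM}(S,\LL_1,\LL_2)\to\mathbf{Pic}_0(S)$ over $[\calO_S]$, matching the determinant map on the stable-pairs side via $\rho_*$. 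So the derived structure is transported rather than compared, and your hardest step is not needed for the statement as formulated (though carrying it out would yield a more explicit presentation of the obstruction theory). Two smaller remarks: (i) the bulk of the paper's actual labor is the abelian-category-level input you take as given --- showing that the monad $\mathbf{M}(\calH)$ of a torsion-free Higgs sheaf has cohomology concentrated in degree zero, and conversely that $\rho_*\calF$ is torsion-free for $\calF$ purely two-dimensional with compact support, which requires proving flatness of the reflexive hull $\calF^{**}$ over $S$; this is where the fourfold case is genuinely harder than the threefold one, since $\calE$ need not be locally free. (ii) Your identification of the commutator's target with $\calE\otimes\omega_S$ implicitly uses the Calabi--Yau condition, which the paper deliberately does not assume in this section; in general the relation lives in $\Hom(\calE,\calE\otimes\LL_1\otimes\LL_2)$.
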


\subsection{PT1/Coulomb correspondence} \label{sec:PTCcorresp} 

As a consequence of \zcref{thm:mainA}, the virtual counting problem for
$\PT_1$ stable pairs on $X$ admits an equivalent formulation in terms
of framed ADHM sheaves on $S$. More precisely, the partition function
\zcref{eq:PTpartfctA} is identified with an analogous partition function
$Z_r^\mathrm{ADHM}(\qq,\QQ)$ for $K$-theoretic virtual equivariant ADHM
sheaf invariants.

By the DUY principle, locally free ADHM sheaves are in one-to-one
correspondence to solutions of certain nonabelian monopole equations
derived from twisted ADHM gauge theory on $S$ \cite[section
8]{Nekrasov:2023nai}. Therefore the ADHM sheaf partition function
provides a rigorous construction for the Higgs branch localization of the
partition function of the topologically twisted ADHM gauge theory on $S$.
By the general principles of supersymmetric localization, we expect this
partition function to admit a Coulomb branch presentation in terms of
meromorphic integrals.

The novelty in the present context is that four-dimensional
Coulomb branch localization must include instanton effects,
as opposed to its lower-dimensional counterparts.  For pure
$\calN=2$ gauge theories on toric surfaces, this problem was
first addressed by \textcite{Nakajima:2003pg} in a special case,
by \textcite{Nekrasov:2006otu} for general non-compact manifolds,
and by \textcite{Bonelli:2020xps} in some compact cases, leading to a
novel Coulomb branch approach to Donaldson four-manifold invariants.
In the present paper we extend Coulomb branch localization to ADHM gauge
theories on toric surfaces and verify agreement with existing explicit
results for $\PT_1$ invariants \cite{Counting_surfaces_II}.

In the following $S$ will be a smooth compact toric surface, hence $X$
will also be a toric Calabi--Yau fourfold. As explained in detail in
\zcref{sec:toricdata} one has a natural rank-four torus action $T_X
\times X\to X$ with finite fixed locus, preserving the image of the zero
section $S\to X$. By definition, the Calabi--Yau torus $T\subset T_X$ is
stabilizer of the Calabi--Yau structure. Since the $T$-action preserves
the zero section, it induces a torus action on $S$ with constant rank-one
stabilizers.  Moreover, the line bundles $\LL_1$, $\LL_2$ have a
natural $T$-equivariant structure.

For the physically minded reader, a summary of the gauge theory
computation is presented below by comparison with the case of non-compact
manifolds.  Since we are concerned with $K$-theoretic invariants, we
consider the natural 5d lift of the 4d gauge theory.

Using the notation in \zcref{sec:4d}, by cluster decomposition, one first
computes the instanton partition function $z_{inst} (\tb)$ in a fixed
vacuum state ${\tb}$ as a sum over topological sectors $\ch_2(F)$ of
equivariant integrals (twisted Witten indices)
\begin{equation}
\tr_{\calH} (-1)^F \eu^{-\beta H} U_g =
\int_{\mcou_r} \hat A \ch \wedge^\bullet_M \calE
\end{equation}
for a fixed value of the equivariant fluxes. As usual, for non-compact
surfaces,  $\mcou_r$ is a moduli space of torsion-free sheaves with
framing along the divisor at infinity.  The instanton partition function
depends on $\tb$ (and other equivariant parameters) via $g$, an element
of the maximal torus, out of which one builds a unitary operator $U_g$
acting on the Hilbert space $\calH$.

We then remove the framing by integrating over $\tb$.  In other words,
a new field becomes dynamical if $S$ is compact, and we integrate over it.
Its contribution is the perturbative part $z_{pert}$, which is finite for
a compact manifold.   We find it useful to make a change of variables
in the integrand, so that it takes the same form for different values
of the equivariant fluxes in a given topological sector, and we get the
meromorphic contour integral
\begin{equation}
Z^C_r \coloneq \sum_\xi \oint d\anew \,
z_{cl} (\xi) \cdot z_{inst} (\anew \eu^{-\xi}) \cdot z_{pert}~,
\end{equation}
where the contour depends on which theory we consider.  Since the
construction is quite involved, we refer to \zcref{sec:4d} for more
details.  Here, it suffices to summarize some of its main features.

\begin{itemize}
\item As explained in \zcref{sec:coulombfixed},
the torus action $T \times X \to X$ translates into a torus action 
on the fields in gauge theory, hence the associated equivariant 
parameters are naturally present in the expression of $Z_r^C$. 
\item By construction, the Coulomb branch partition function admits a natural
formal series expansion in terms of two gauge theory counting parameters:
$\qq$, associated to the instanton number, and $\QQ$, associated to magnetic flux.
\item As explained at the beginning of \zcref{sec:4d},
we assume that $\LL_1$ is such that $\chi(S,\LL_1)$ has no constant terms,
where $\chi(S, \LL_1)$ is the $T$-equivariant Euler characteristic.
\end{itemize}

Then the main claim of  $\PT_1$/Coulomb correspondence is the
\begin{conjecture} \label{conj:main}
For any $r \geq 1$, the Coulomb branch partition function $Z^C_r (\QQ,
\qq)$ coincides with the generating function $Z_r^{\PT_1} (\QQ,\qq)$
of $T$-equivariant $\PT_1$-invariants, up to an overall numerical factor
depending only on $r$.
\end{conjecture}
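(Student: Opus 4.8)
The plan is to establish the statement in two stages: reduce it first, via the spectral correspondence, to an identity purely about the ADHM gauge theory on the toric surface $S$, and then prove that identity by a Jeffrey--Kirwan-type residue localization compatible with the toric structure. By \zcref{thm:mainA} and the resulting identification of generating functions one has $Z_r^{\PT_1}(\QQ,\qq)=Z_r^{\mathrm{ADHM}}(\qq,\QQ)$, so it suffices to prove $Z_r^C(\QQ,\qq)=c_r\,Z_r^{\mathrm{ADHM}}(\qq,\QQ)$ for a constant $c_r$ depending only on $r$. First I would compute $Z_r^{\mathrm{ADHM}}$ by $T_X$-equivariant (Higgs-branch) localization on the moduli space of framed ADHM sheaves on $S$: using the toric charts of $S$ one shows that the $T_X$-fixed locus is governed by combinatorial data glued along the one-dimensional toric skeleton, each fixed point of $S$ contributing a local ADHM instanton sum and each toric edge contributing transition data recording the equivariant fluxes $\ch_2(F)$. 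This realizes $Z_r^{\mathrm{ADHM}}$ as a sum of fixed-point contributions, once the square-root twist appearing in \eqref{eq:PTinvA} and the choice of square root of the determinant line bundle allowed by the isotropic obstruction theory have been pinned down consistently at each vertex.

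The second and harder stage is to show that the meromorphic contour integral defining $Z_r^C$ evaluates, after deforming the contour and summing residues, to exactly this fixed-point sum. The essential point is that abelianizing the rank-$r$ ADHM gauge theory to its maximal torus $U(1)^r$, together with the Jeffrey--Kirwan prescription dictated by the chosen stability chamber --- which is what fixes ``the contour we consider'' --- produces poles of the integrand $z_{cl}(\xi)\,z_{inst}(\anew\,\eu^{-\xi})\,z_{pert}$ labeled by precisely the combinatorial data enumerated in the first stage; a residue-by-residue comparison then yields the identity up to an overall factor. The constant $c_r$ should be traced to the perturbative factor $z_{pert}$ generated when the framing field $\tb$ is rendered dynamical and integrated out on the compact surface, i.e.\ to the universal contribution independent of $(\beta,n)$.

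The main obstacle is that four-dimensional Coulomb branch localization genuinely includes instanton corrections, in contrast with the three-dimensional situation \cite{3d_qmaps}: the integrand is not rational in the obvious variables but assembled out of the $K$-theoretic ADHM instanton series itself, so existing quasimap/Coulomb comparison theorems do not apply directly. One therefore needs a ``contour form of the ADHM vertex'' together with a proof that gluing these vertex contributions along the toric skeleton through the $\oint d\anew$ integral reproduces the global partition function --- a gluing statement presently verified only in examples (local $\BP^2$, cf.\ \cite{Counting_surfaces_II}). A second, more conceptual obstacle is that the gauge-theory side is for now defined through a physical localization argument rather than a theorem: a complete proof would require making the Higgs-branch localization of the twisted ADHM gauge theory on the compact surface $S$ rigorous via the non-abelian monopole equations and the DUY correspondence for locally free ADHM sheaves, and then checking that the sign assigned to each fixed point by the gauge-theory construction agrees with --- indeed canonically fixes --- the sign ambiguity present in $\PT_1^K(X;v)$.
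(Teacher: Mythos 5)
The statement you are addressing is \zcref{conj:main}, which the paper explicitly leaves as a conjecture: there is no proof in the paper to compare against. What the paper supplies is (i) the rigorous spectral correspondence \zcref{thm:mainA} identifying $\mathbf{SP}_1(X)$ with the derived moduli space of framed ADHM sheaves, (ii) a \emph{physical} derivation of $Z_r^C$ as a Coulomb branch integral, and (iii) explicit verification for local $\BP^2$ at $r=1,2$ in low degree against \cite{Counting_surfaces_II}. Your first stage (reduction to $Z_r^{\mathrm{ADHM}}$) is exactly the paper's \zcref{thm:mainA} and is sound. But your second stage is not a proof strategy so much as a restatement of the conjecture itself: the assertion that the Jeffrey--Kirwan residues of $\oint d\anew\, z_{cl}\cdot z_{pert}\cdot z_{inst}$ match the Higgs-branch fixed-point contributions term by term \emph{is} the content of \zcref{conj:main}, and you supply no mechanism for establishing it beyond the example checks the paper already performs.

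Three concrete gaps deserve emphasis. First, the paper states that the space $\mcou_k$ over which the Coulomb integral is heuristically an integral ``at present does not admit a rigorous mathematical construction,'' and that $Z_r^C$ ``is not obtained by direct equivariant localization computations on moduli spaces''; so the object whose residues you propose to enumerate does not yet exist as a scheme or stack, and a residue-by-residue comparison has no rigorous left-hand side. Second, on the Higgs side the $T$-fixed loci of $SP_1(X;v)$ are in general positive-dimensional, not isolated, so ``a local ADHM instanton sum at each vertex glued along edges'' is not yet a theorem about $Z_r^{\mathrm{ADHM}}$ either; making it one is a separate localization computation that the paper does not carry out in general. Third, your remark that the square-root twist can be ``pinned down consistently at each vertex'' inverts the logic of the paper: the sign ambiguity in $\PT_1^K(X;v)$ is genuinely unresolved on the Higgs side, and the paper's point is that the Coulomb formula is conjectured to \emph{provide} the sign prescription, not that the signs can be independently fixed and then checked. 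A route the paper itself suggests, and which differs from yours, is wallcrossing \cite{K_Donaldson} via a master space \cite{Donaldson_alg_surfaces}; if you want to pursue a proof rather than a verification program, that is probably the more promising direction.
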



\begin{remark}
As explained in \zcref{rem:zeromodes}, the partition function
admits a well-defined non-equivariant limit provided that
the line bundle $\LL_1$ satisfies the vanishing conditions
\be \label{eq:vancond}
H^k (\LL_1) = 0, \ k=0,2.
\ee
By Serre duality, this is equivalent to $H^0(\LL_i)=0$, $1\leq i\leq 2$,
in which case the moduli space $SP_1(X;v)$ is compact by \zcref{prop:compC}.
Hence \zcref{eq:PTpartfctA} admits a non-equivariant
limit as well, and \zcref{conj:main} implies that their
non-equivariant limits coincide, up to an overall numerical factor
depending only on $r$.
\end{remark}

In \zcref{sec:localp2}, we verify \zcref{conj:main} by explicit computations
(first in a few equivariant cases, and then more extensively in the non-equivariant limit)
for the case where $S$ is the projective plane $\BP^2$ and
$\LL_i \simeq \calO_{\BP^2}(-i)$ for $1\leq i\leq 2$.  Note that, up to
permutation, this is the only choice of line bundles that satisfies the
condition $\chi(\LL_1)=0$.  Furthermore, for $S= \BP^2$, this is also the
only case where the moduli space of stable pairs $SP_1(X;v)$ is compact
for any topological invariants $v$.  As shown in \zcref{sec:localp2},
upon taking the non-equivariant limit, we obtain the following results
for the Coulomb branch partition function:
\begin{multline}\label{eq:p2rk1}
Z^C_1 = \QQ^{\frac{3}{2}} \qq \br{M^{-1}}
+ \QQ^{5/2} \left( \qq^3 \br{M^{-3}} + \qq^2 \br{M^{-2}} + 2 \qq \br{M^{-1}} \right) \\
+ \QQ^{7/2} \left( \qq^6 \br{M^{-6}} + \qq^5 \br{M^{-5}} + 2 \qq^4 \br{M^{-4}} +3 \qq^3 \br{M^{-3}} + 5 \qq^2 \br{M^{-2}} + \dots \right) \\
+ \QQ^{9/2} \qq^{10} \br{M^{-10}} + \dots
\end{multline}
where dots mean higher orders, as well as
\begin{multline} \label{eq:p2rk2}
- Z^C_2 \cdot \br{M}^{-4} = \QQ^4 \qq^4
+ \QQ^5  \Big( 2 \qq^7 (M^{\frac12}+M^{\frac12}) (2M + 1 + 2M^{-1}) \\
+ 2 \qq^6 (M + M^{-1})
+ 2 \qq^5 (M^{\frac12} + M^{-\frac12})
+ 4 \qq^4 \Big) \\
+ \QQ^6 \Big( 2 \qq^{11} (M^{\frac12}+M^{-\frac12}) (4M^3+3M^2+6M+4+6M^{-1}+3M^{-2}+4M^{-3}) \\
+ \qq^{10} (7M^3+12M^2+15M+16+15M^{-1}+12M^{-2}+7M^{-3}) \\
+ 2 \qq^9 (5M+1+5M^{-1})(M+M^{-1})(M^{\frac12}+M^{-\frac12}) \\
+ \qq^8 (11M^2+20M+28+20M^{-1}+11M^{-2}) \\
+ 2 \qq^7 (11M + 3 + 11M^{-1}) (M^{\frac12} + M^{-\frac12})
+ \dots \Big) + \dots
\end{multline}
In order to compare the above expressions with the partition function
\eqref{eq:PTpartfctA}, note that the class $\beta \in H_2(\BP^2, \IQ)$
is written as $\beta = m [H]$ for some $m \in \frac12 \IQ$, where $[H]
\in H_2(\BP^2, \IZ)$ is the hyperplane class.  Then the above expressions
are in agreement with $\PT_1$-stable pair invariants for all cases
previously computed \cite{Counting_surfaces_II}, providing at the same
time new conjectural results for higher-degree invariants.

\subsection{Open questions}
We conclude this section with a brief discussion of some interesting
open problems.

Geometrically, this work provides a first encounter with a quasimap
theory defined on surfaces rather than curves. Motivated by the
enumerative geometry of surfaces in Calabi--Yau fourfolds, this
construction occurs naturally in topologically twisted ADHM gauge theory.
As a natural generalization, we expect similar constructions to be
present in more general topologically twisted quiver gauge theories as
compactifications of moduli spaces of solutions to non-abelian monopole
equations. Furthermore, we also expect supersymmetric localization to
provide an explicit conjectural formula for the associated partition
functions via meromorphic integrals.  For affine ADE quivers, we also
expect the resulting Coulomb branch partition function to be related to
the $\PT_1$-theory of local Calabi--Yau orbifolds.

Another natural open problem is whether one can prove \zcref{conj:main}
using the wallcrossing approach \cite{K_Donaldson},
as well as formulate similar conjectures for non-toric surfaces. In
principle, for surfaces of general type, one can proceed by analogy
with \textcite{M_Donaldson}, using the master space constructed by
\textcite{Donaldson_alg_surfaces}.

On the physics side, several interesting open questions on Coulomb branch
localization in four-dimensional gauge theory emerge immediately from
\zcref{sec:4d, sec:localp2}:

\begin{itemize}
\item The main difficulty in the computations carried out in
\zcref{sec:localp2} is the rapidly growing combinatorial complexity of
Young diagrams.  It would be useful, in order to speed up computations,
to have a recursion relation for the instanton partition function
of our theory, such as those available for more conventional 4d
\cite{Poghossian:2009mk} and 5d \cite{Yanagida:2010vz} theories.

\item If one is only interested in the non-equivariant limit, one would
benefit from knowing the Seiberg--Witten curve of our theory.  Nekrasov's
conjecture \cite{Nekrasov:2002qd} for the K-theoretic partition function
on $\BC^2 \times S^1_\beta$ is the general statement that, given the
genus expansion
\begin{equation} \label{eq:nek-conj}
\ep_1 \ep_2 \log Z =
F_0 + (\ep_1+\ep_2) H + (\ep_1+\ep_2)^2 G + \ep_1 \ep_2 F_1 + \dots
\end{equation}
$\log Z$ is regular in the $\ep_{1,2} \to 0$ limit. 
If $S$ is compact, this implies that we are allowed to turn off equivariant
parameters in \zcref{eq:zk}: for any given $\xi$, the integrand
has a well-defined limit $q_1,q_2 \to 1$,
\begin{multline}
\lim \log (z_{cl} \cdot z_{pert} \cdot z_{inst}) =
\int_S F_0 (\anew \eu^{-\xi}) + \int_S c_1 (S) H (\anew \eu^{-\xi}) \\
+ (2\chi(S)+3\sigma(S)) G(\anew) + \chi(S) F_1(\anew)
\end{multline}
which can be expressed in terms of four universal functions, and $Z_r^C$
can be written as a contour integral of that, the contour being the
limit of our equivariant contour.  (In this limit, some simple poles
may collide, resulting in higher-order poles.)
A related question is whether the Coulomb branch partition
function admits an alternative construction using only the infrared limit
of the gauge theory \cite{Monopoles_4_manifolds, Int_C_branch,Uplane,
Top_corr}.

\item The previous point is also amenable to generalization to non-toric
surfaces, where we only use equivariance with respect to the fiberwise
antidiagonal torus action on $X$.

\item The extension to higher-rank stable pair invariants of
local Calabi--Yau fourfolds is also possible in principle, albeit
computationally more challenging.


\item Including the fermionic surpermultiplets $(\Upsilon, \Psi)$ the
gauge theory studied in \zcref{sec:4d} is superconformal. Hence
one could expect some modular properties for its partition function.
\end{itemize}

\subsection{Organization}

The paper is structured as follows: \zcref{sect:SP_ADHM} consists of a
detailed treatment of the spectral correspondence between stable pairs
and ADHM sheaves, \zcref{sec:4d} provides a thorough account of Coulomb
branch localization, while \zcref{sec:localp2} is focused on the case
of a local projective plane.

\subsection{Acknowledgments}
We are very grateful to Mauro Porta and Francesco Sala for very helpful
discussions on the material in \zcref{sect:SP_ADHM},
to Wu-yen Chuang for collaboration on a related project, and to Martijn
Kool for many inspiring discussions, and for convincing us
that counting surfaces in fourfolds is an interesting problem.

We thank Francisco Morales, Mauricio Romo and Massimiliano Ronzani for
discussions on 4d gauge theories.

The research of NP was supported by the US Department of Energy
under grant DE-SC0010008.

\section{Stable pairs on fourfolds as ADHM sheaves} \label{sect:SP_ADHM} 

In this section $X$ is the total space of direct sum of line bundles
$\LL_1 \oplus \LL_2$ over a smooth projective surface $S$. We
will not assume that $H^1(S,\IQ)=0$ unless otherwise stated.  Moreover,
we also do not need to
assume the Calabi--Yau condition $\LL_1 \otimes \LL_2\simeq \omega_S$.
Proceeding by analogy with \cite{ADHMsheaves}, the goal of this section
is to establish a one-to-one correspondence between ${\PT}_1$-stable
pairs on $X$ with compact support and framed ADHM sheaves on $S$. The
main results are formulated as isomorphisms of derived stacks in
\zcref{thm:triplesADHM, cor:pairsADHM}.  While the general approach is
similar to \cite{ADHMsheaves}, the fourfold case is technically more
involved, hence this section will provide a detailed self-contained
account.

\subsection{ADHM sheaves}\label{sect:ADHM} 
Recall that the ADHM quiver is a quiver with relations defined by the diagram
\[
\xymatrix{ 
& \Box  \ar@<0.5ex>[dd]^-{i} & \\
& & \\
& \bullet \ar@(ul,dl)_{a_1} \ar@(ur,dr)^-{a_2} 
 \ar@<0.5ex>[uu]^-{j} \\}
\]
where the ideal of relations in the path algebra is generated by 
\[
a_1a_2-a_2 a_1+  ij.
\]
A rank one framed cyclic representation of the ADHM is defined by a
quadruple of linear maps $(A_1, A_2,I,J)$, with
\[
A_i \in {\rm End}(\IC^n),\ 1\leq i\leq 2,\qquad 
I \in {\rm Hom}(\IC, \IC^n), \qquad  J \in {\rm Hom}(\IC^n,\IC),
\]
satisfying the quadratic relation $[A_1,A_2]+IJ=0$.  The cyclicity
condition states that the image of $I$ is not contained in any subspace
$0\subsetneq V' \subsetneq V$ preserved by $A_i$, $1\leq i \leq
2$. Isomorphisms are defined by the natural action of $GL(n)$. As shown
in \cite[Theorem 2.1]{Lectures_Hilb}, the moduli space of rank one framed
cyclic ADHM quiver representation is isomorphic to the Hilbert scheme of
$n$ points on $\IA^2$.  Moreover, \cite[Proposition 2.8]{Lectures_Hilb}
also shows that $J=0$ for all rank one stable framed representation. Hence
$A_1$, $A_2$ are commuting endomorphism.

For us, an ADHM sheaf on $S$ will be defined by a quintuple
$(\calL, \calE, \Phi_1, \Phi_2, \phi)$, where:
\begin{itemize}
\item $\calL$ is a line bundle on $S$ with $\ch(\calL)=\ch(\calO_S)$,  
\item $\calE$ is a torsion free coherent sheaf on $S$, and 
\item $\Phi_i: \calE \to \calE \otimes \LL_{i}$, $1\leq i\leq 2$, and $\phi: \calL \to \calE$
are morphisms of coherent sheaves on $S$ satisfying the relation
\[
(\Phi_1 \otimes \mathbf{1}_{\LL_2}) \circ \Phi_2
- (\Phi_2 \otimes \mathbf{1}_{\LL_1}) \circ \Phi_1 =0.\]
\end{itemize}
In addition we will also impose a stability condition  which requires
the data $(\calE, \Phi_1, \Phi_2, \phi)$ to be generically cyclic. More
precisely, a torsion free ADHM sheaf $(\calL, \calE, \Phi_1, \Phi_2,
\phi)$ will be called stable, or generically cyclic, if the image of
$\phi$ is not contained in any proper saturated subsheaf $0 \subsetneq
\calE' \subsetneq \calE$ preserved by $\Phi_i$, $1\leq i \leq 2$. For
completeness, recall that a subsheaf $\calE'\subset \calE$ is said to
be saturated if the quotient $\calE/\calE'$ is a torsion free sheaf.

A framed ADHM sheaf on $S$ will be an ADHM sheaf as above where
$\calL=\calO_S$ is fixed. We will denote such objects by $( \calE,
\Phi_1, \Phi_2, \phi)$.

Proceeding by analogy with \cite{ADHMsheaves}, the goal of this section
is to establish a one-to-one correspondence between $\PT_1$-stable
pairs on $X$ with compact support and stable ADHM sheaves on $S$. A
precise statement is formulated in \zcref{thm:triplesADHM} in the form
of an isomorphism of derived moduli stacks.  While the general approach
is similar to \cite{ADHMsheaves}, some technical details will be more
involved since in the present context ADHM sheaves are not necessarily
locally free.

\subsection{Beilinson Theorem}\label{sect:Beilinson}
We first construct a projective completion 
\[
\xymatrix{ 
X \ar[rr] \ar[dr]_-{\pi} & & Y \ar[dl]^-{\rho} \\
& S & \\
}
\]
over $S$ as a projective bundle $Y \coloneq \operatorname{Proj}(\LL)$,
where $\LL \coloneq \LL_0^{-1} \oplus \LL_1^{-1} \oplus \LL_2^{-1}$,
and $\LL_0 \coloneq \calO_S$.

Let $\calO_Y(1)$ be the relative tautological line bundle, which satisfies
\[ 
\rho_*\calO_Y(1)=\LL_0^{-1} \oplus \LL_1^{-1} \oplus \LL_2^{-1}.
\]
Let $z_i \in H^0 (\rho^* \LL_i \otimes \calO_Y(1))$, $0\leq i \leq 2$,
be the tautological sections satisfying $\rho_*(z_i)=1$.  Note that $X$
coincides with the complement of the divisor $z_0=0$ in $Y$, which will
be denoted by $D_\infty$.

We next recall Beilinson theorem for the derived category of the
projective plane \cite{Beilinson}. Let
\[
T = \calO_{\IP^2}\oplus \calO_{\IP^2}(1) \oplus \calO_{\IP^2}(2),
\]
and let 
\[
B = {\rm RHom}_{\IP^2}(T, T),
\]
which has a natural algebra structure given by composition.  Then the
assignment
\[
E \mapsto {\rm RHom}_{\IP^2}(T, E)
\]
determines an equivalence of derived categories 
\[
D^b(\IP^2) \to D^b(B^{\sf op}-{\rm mod}) 
\]
where the target is the bounded derived category of $B^{\sf op}$-modules.

In the present framework, we will use the relative variant of Beilinson's
theorem for the projective bundle $\rho:Y \to S$. In the relative setting,
the above result was first stated with proof in \cite{Beilinson}. It
was proven in \cite[Theorem 4.1]{Perverse_CM} for non-commutative
$\IP^1$-bundles over curves, but their arguments clearly apply for more
general projective bundles.  One can also deduce the relative variant of
Beilinson's theorem from the decomposition theorem proven in \cite[Theorem
2.6]{Proj_bundles}, reasoning by analogy to \cite{Beilinson}.

Let
\[
\calT \coloneq \calO_Y \oplus \calO_Y(1) \oplus \calO_Y(2)
\]
and let 
\[
\calB \coloneq R\rho_*R{\mathcal Hom}_Y(\calT, \calT). 
\]
Then note that $\calB$ is a locally free complex concentrated in degree
zero,
\begin{equation}
\calB \simeq \calO_S \oplus \calO_S \oplus \calO_S 
\oplus \LL \oplus \LL \oplus S^2(\LL),
\end{equation} 
and has a natural sheaf of algebras structure given by composition.  Let
${\calB}^{\sf op}-{\rm mod}$ denote the abelian category of quasi-coherent
$\calB^{\sf op}$-modules, and let $D^b(\calB^{\sf op}-{\rm mod})$ be
the associated derived category of complexes with coherent cohomology.
Then we have:
\begin{theorem}\label{thm:tilting}
The assignment 
\be\label{eq:tiltingA}
\calF \mapsto R\rho_*R{\mathcal Hom}_Y(\calT, \calF) 
\ee
determines an equivalence of derived categories
\be\label{eq:dereq}
\tau: D^b(Y) \to D^b({\calB}^{\sf op}-{\rm mod}). 
\ee
\end{theorem}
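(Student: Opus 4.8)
\emph{Overall plan.} The functor $\tau$ is the relative incarnation of Beilinson's tilting functor for the $\IP^2$-bundle $\rho\colon Y\to S$, so the plan is to exhibit $\calT=\calO_Y\oplus\calO_Y(1)\oplus\calO_Y(2)$ as a \emph{relative tilting bundle} and then run the standard tilting argument. Two inputs are needed. First, that $\calB=R\rho_*R\mathcal{H}om_Y(\calT,\calT)$ is a sheaf of $\calO_S$-algebras concentrated in cohomological degree zero and locally free: this is the assertion recorded just above the theorem, and it follows from cohomology and base change applied to $\mathcal{H}om_Y(\calO_Y(i),\calO_Y(j))=\calO_Y(j-i)$ for $0\le i,j\le 2$, using that on a $\IP^2$-bundle $R\rho_*\calO_Y(k)=S^k(\LL)$ in degree $0$ for $k\ge 0$ while $R\rho_*\calO_Y(k)=0$ for $-2\le k\le -1$; one reads off $\calB\simeq\calO_S^{\oplus 3}\oplus\LL^{\oplus 2}\oplus S^2(\LL)$ in degree $0$. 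Second, that $\calT$ \emph{relatively generates} $D^b(Y)$, i.e.\ the smallest thick subcategory of $D^b(Y)$ containing $\rho^*\mathrm{Perf}(S)\otimes\calO_Y(i)$ for $0\le i\le 2$ is all of $D^b(Y)$.

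\emph{Relative generation.} For the second input I would argue via the resolution of the relative diagonal on $Y\times_S Y$. Writing $p_1,p_2$ for the projections and $\delta\colon Y\hookrightarrow Y\times_S Y$ for the diagonal, the relative Euler and Koszul sequences of $\rho$ produce a finite locally free resolution of $\delta_*\calO_Y$ whose terms have the form $p_1^*\calO_Y(-i)\otimes p_2^*\bigl(\Omega^i_{Y/S}(i)\bigr)$, $0\le i\le 2$. Since $\calF\simeq Rp_{2*}\bigl(p_1^*\calF\otimes^{\mathbf L}\delta_*\calO_Y\bigr)$ for $\calF\in D^b(Y)$, and $Rp_{2*}\bigl(p_1^*(\calF\otimes\calO_Y(-i))\bigr)\simeq L\rho^*R\rho_*(\calF\otimes\calO_Y(-i))$ by flat base change, substituting the resolution realizes every $\calF$ as an iterated cone of objects $\rho^*M_i\otimes\Omega^i_{Y/S}(i)$ with $M_i\in D^b(S)$. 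The same relative Euler and Koszul sequences identify the thick subcategory generated, up to $\rho^*\mathrm{Perf}(S)$-twists, by the $\Omega^i_{Y/S}(i)$ with the one generated by the $\calO_Y(i)$, which gives relative generation. Equivalently---and this is the route flagged in the text---one may simply invoke the decomposition theorem \cite[Theorem~2.6]{Proj_bundles}, which yields the semiorthogonal decomposition $D^b(Y)=\bigl\langle\rho^*D^b(S),\,\rho^*D^b(S)\otimes\calO_Y(1),\,\rho^*D^b(S)\otimes\calO_Y(2)\bigr\rangle$ and hence relative generation at once.

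\emph{From tilting to equivalence.} Granting the two inputs, $\tau$ is an equivalence by the usual mechanism. The functor $\tau$ commutes with small coproducts and admits a left adjoint $\lambda\colon D(\calB^{\sf op}\text{-}\mathrm{mod})\to D_{qc}(Y)$, morally $M\mapsto L\rho^*M\otimes^{\mathbf L}_{\rho^*\calB}\calT$; since $\calT$ is locally free and $\calB$ is module-finite over $\calO_S$, both $\tau$ and $\lambda$ preserve bounded complexes with coherent cohomology. The unit $\mathrm{id}\to\tau\lambda$ is an isomorphism on the free rank-one module $\calB=\tau(\calT)$ by the first input, hence on all of $D^b(\calB^{\sf op}\text{-}\mathrm{mod})$ because $\calB$ generates it; the counit $\lambda\tau\to\mathrm{id}$ is an isomorphism on each $\rho^*M\otimes\calO_Y(i)$---where, by the computation $R\rho_*R\mathcal{H}om_Y(\calO_Y(i),\rho^*N\otimes\calO_Y(j))\simeq N\otimes R\rho_*\calO_Y(j-i)$ and the parallel $\Hom$-computation over $\calB^{\sf op}$, it reduces to a tautology---hence on all of $D^b(Y)$ by relative generation. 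This is precisely the argument of \cite[Theorem~4.1]{Perverse_CM} for non-commutative $\IP^1$-bundles, which transcribes \emph{mutatis mutandis} to the present $\IP^2$-bundle; compare \cite{Beilinson}.

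\emph{Main obstacle.} The only genuinely delicate point is the relative generation step together with the accompanying derived bookkeeping: obtaining the relative Beilinson resolution of $\delta_*\calO_Y$ with the stated shape of the terms, tracking the $\rho^*\mathrm{Perf}(S)$-coefficients through the substitution, and checking that the adjoint $\lambda$ genuinely lands in $D^b(Y)$ and not merely in $D_{qc}(Y)$. All of this is standard for projective bundles and is subsumed either by \cite[Theorem~2.6]{Proj_bundles} or by the arguments of \cite[Theorem~4.1]{Perverse_CM}; once it is in hand, the remaining verifications are the elementary $R\rho_*$-computations noted in the first paragraph.
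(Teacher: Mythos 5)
Your proposal is correct and follows exactly the route the paper itself indicates: the paper gives no proof of \zcref{thm:tilting}, instead citing \cite{Beilinson}, \cite[Theorem~4.1]{Perverse_CM}, and \cite[Theorem~2.6]{Proj_bundles}, and your write-up is a faithful expansion of the standard relative tilting argument those references supply (degree-zero locally free $\calB$, relative generation via the Beilinson resolution of the diagonal or Orlov's semiorthogonal decomposition, and the unit/counit check on generators). No gaps.
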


By definition, a $\calB^{\sf op}$-module consists of a 
triple $(\calE_0, \calE_1, \calE_2)$ of quasi-coherent
$\calO_S$-modules together with morphisms 
\[
a: \calE_0 \to \calE_1 \otimes (\LL_0 \oplus \LL_1 \oplus \LL_2), 
 \qquad b: \calE_1 \to \calE_2 \otimes (\LL_0 \oplus \LL_2 \oplus \LL_1)
\]
satisfying the quadratic relations 
\[
 (b_i \otimes \mathrm{id}_{\LL_j}) \circ a_j =
 (b_j \otimes \mathrm{id}_{\LL_i}) \circ a_i
\]
for $0 \leq i, j \leq 2$, $i\neq j$.  A $\calB^{\sf op}$-module will
be called coherent if the $\calO_S$-modules $\calE_i$, $0\leq i\leq 2$,
are coherent.

Now let ${\rm Coh}_{\sf c}(X)$ be the abelian category of coherent
sheaves on $X$ with compact support, or, equivalently, coherent sheaves
on $Y$ with set theoretic support contained in $X$. For simplicity, the
objects of ${\rm Coh}_{\sf c}(X)$ will be called horizontal sheaves in
the following.

We also introduce a category of Higgs sheaves on $S$, consisting of
triples $(\calE, \Phi_1, \Phi_2)$ where $\calE$ is a coherent sheaf
on $S$ and $\Phi_i: \calE \to \calE \otimes \LL_i$, $1\leq i \leq 2$,
are morphisms of sheaves satisfying the relation 
\[
(\Phi_1 \otimes \mathbf{1}_{\LL_2}) \circ \Phi_2
- (\Phi_2 \otimes \mathbf{1}_{\LL_1}) \circ \Phi_1=0.
\]
Such triples form an abelian category ${\rm Higgs}(S;\LL_1, \LL_2)$.

Any Higgs sheaf $(\calE, \Phi_1, \Phi_2)$ determines canonically a
coherent ${\calB}^{\sf op}$-module with $\calE_i=\calE$, $0\leq i \leq
2$, and
\[
a_0=b_0 = \mathrm{id}_\calE, \qquad a_i = b_i = \Phi_i, \ 1\leq i \leq 2.
\]
It is straightforward to check that this construction determines a functor 
\be\label{eq:higgsfunct} 
{\rm Higgs}(S; \LL_1, \LL_2) \to {\mathcal B}^{\sf op}-{\rm mod}.
\ee
Moreover, note that the coherent $\calB^{\sf op}$-modules
$(\calE_i, a_i, b_i)$, $0\leq i\leq 2$, with
$a_0$ and $b_0$ isomorphisms form a full abelian subcategory $\calC \subset
\mathcal{B}^{\sf op}-{\rm mod}$.  Then it is also straightforward to
check the following:
\begin{proposition}\label{prop:higgscat} 
The functor \eqref{eq:higgsfunct} induces an equivalence of
Abelian categories ${\rm Higgs}(S; \LL_1, \LL_2) \to \calC$.
\end{proposition}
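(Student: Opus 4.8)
The plan is to prove that the functor \eqref{eq:higgsfunct} is fully faithful with essential image exactly $\calC$; since $\calC$ is a full abelian subcategory of $\calB^{\sf op}$-${\rm mod}$ and ${\rm Higgs}(S;\LL_1,\LL_2)$ is abelian, this gives the asserted equivalence of abelian categories. First one notes that the functor indeed lands in $\calC$: for a Higgs sheaf $(\calE,\Phi_1,\Phi_2)$ the associated $\calB^{\sf op}$-module has $a_0=b_0=\mathrm{id}_\calE$, which are isomorphisms; its quadratic relations indexed by a pair containing $0$ hold trivially (as $\LL_0=\calO_S$, $a_0=b_0=\mathrm{id}_\calE$, and $a_i=b_i$), while the single relation indexed by $\{1,2\}$ is exactly the Higgs relation.

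Fullness and faithfulness are immediate from the rigid shape of the objects of $\calC$. The functor sends a morphism $f\colon(\calE,\Phi_1,\Phi_2)\to(\calE',\Phi_1',\Phi_2')$ of Higgs sheaves to the triple $(f,f,f)$, so it is faithful. Conversely, a morphism $(f_0,f_1,f_2)$ of $\calB^{\sf op}$-modules between the images of two Higgs sheaves must commute with the structure maps $a_0=b_0=\mathrm{id}$, which forces $f_0=f_1=f_2$; commuting with $a_i=\Phi_i$ then says exactly that this common map intertwines $\Phi_i$ with $\Phi_i'$, that is, defines a morphism of Higgs sheaves. Hence the functor is full.

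For essential surjectivity, take $M=(\calE_0,\calE_1,\calE_2,a,b)\in\calC$. Since $\LL_0=\calO_S$, the maps $a_0\colon\calE_0\xrightarrow{\ \sim\ }\calE_1$ and $b_0\circ a_0\colon\calE_0\xrightarrow{\ \sim\ }\calE_2$ are isomorphisms, and transporting the $\calB^{\sf op}$-module structure of $M$ along them produces an isomorphic object $M'$ all of whose three underlying $\calO_S$-modules equal $\calE\coloneq\calE_0$ and for which $a_0'=b_0'=\mathrm{id}_\calE$. The relations of $M'$ indexed by a pair containing $0$ then force $a_i'=b_i'$; writing $\Phi_i\coloneq a_i'\colon\calE\to\calE\otimes\LL_i$ for $1\leq i\leq 2$, the relation of $M'$ indexed by $\{1,2\}$ reads $(\Phi_1\otimes\mathbf{1}_{\LL_2})\circ\Phi_2=(\Phi_2\otimes\mathbf{1}_{\LL_1})\circ\Phi_1$. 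Hence $(\calE,\Phi_1,\Phi_2)$ is a Higgs sheaf and its image under \eqref{eq:higgsfunct} is $M'\simeq M$, proving essential surjectivity.

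The only step needing genuine care is the transport in the last paragraph: one must keep track of the reordered summands $\LL_0\oplus\LL_2\oplus\LL_1$ in the target of $b$ and check that the transported structure maps take the asserted form, which reduces to the $j=0$ relations $b_i\circ a_0=(b_0\otimes\mathbf{1}_{\LL_i})\circ a_i$ of $M$. This is routine, so no real obstacle arises; combining the three points, \eqref{eq:higgsfunct} is an equivalence onto $\calC$.
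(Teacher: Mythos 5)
Your proof is correct; the paper itself offers no argument here (it simply declares the statement "straightforward to check"), and your verification — fully faithful via the rigidity imposed by $a_0=b_0=\mathrm{id}$, plus essential surjectivity by transporting the module structure along the isomorphisms $a_0$ and $b_0$ — is exactly the standard check the authors had in mind. The one subtlety you flag (the reordered summands in the target of $b$ and the implicit symmetry isomorphism $\LL_i\otimes\LL_j\simeq\LL_j\otimes\LL_i$ in the quadratic relations) is handled appropriately.
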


Now let ${\rm Coh}^{(2)}_{\sf c}(X)\subset {\rm Coh}_{\sf c}(X)$ be
the full subcategory consisting of purely two dimensional sheaves.
Let ${\mathcal Higgs}_{\sf t.f.}(S;\LL_1, \LL_2)\subset {\mathcal
Higgs}(S;\LL_1, \LL_2)$ be the full subcategory consisting of torsion
free Higgs sheaves.  Our next goal is to show that that the tilting
equivalence \eqref{eq:dereq} induces a natural categorical equivalence
${\rm Coh}^{(2)}_{\sf c}(X)\to {\mathcal Higg}_{\sf t.f.}(S;\LL_1, \LL_2)$
using \zcref{prop:higgscat}.  A similar result was proven in \cite{TT_VW}
for total spaces of line bundles on surfaces. We will provide a detailed
proof below because the present case is more involved.  Moreover, an
important fact for us is to establish that this categorical equivalence
is induced by a derived equivalence.

First note that the scheme theoretic support of any sheaf $\calF \in  {\rm
Coh}^{(2)}_{\sf c}(X)$ is by definition disjoint from $D_\infty\subset
Y$. Since the latter is relatively ample, we have:
\begin{lemma}\label{lem:supplemm}
Let $\calF\in {\rm Coh}_{\sf c}(X)$ and let $Z\subset X$ be its scheme-theoretic support. 
Then the induced projection $Z\to S$ has zero dimensional scheme-theoretic fibers. 
\end{lemma}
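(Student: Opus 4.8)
The plan is to exploit the fact that $Z$ is really a closed subscheme of $X$, not just of $Y$, together with the affineness of $\pi\colon X\to S$. First, since $\calF$ is a coherent sheaf on $Y$ whose support is, by hypothesis, a closed subset of $Y$ contained in $X=Y\setminus D_\infty$, the scheme-theoretic support $Z$ is a closed subscheme of $Y$ whose underlying set avoids $D_\infty$. Consequently the scheme-theoretic intersection $Z\cap D_\infty$ is a closed subscheme with empty underlying space, hence the empty scheme; equivalently, $Z$ is a closed subscheme of $X$. (This is the only place where one must pass from set-theoretic to scheme-theoretic disjointness, and it is harmless.)

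Next I would observe that $\rho|_Z\colon Z\to S$ is at the same time affine and proper. It is affine because $Z$ is a closed subscheme of $X$, and $\pi\colon X\to S$ is an affine morphism: $X$ is the total space of the rank-two bundle $\LL_1\oplus\LL_2$ over $S$, which is the complement in the $\BP^2$-bundle $Y$ of the relatively ample divisor $D_\infty=\{z_0=0\}$. It is proper because $Z$ is a closed subscheme of $Y$, and $\rho\colon Y\to S$ is projective. A morphism that is simultaneously affine and proper (and of finite type) is finite; therefore $\rho|_Z\colon Z\to S$ is finite.

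It then follows that for every $s\in S$ the scheme-theoretic fibre $Z\times_S\operatorname{Spec}\kappa(s)$ is finite over $\kappa(s)$, i.e.\ the spectrum of a finite-dimensional $\kappa(s)$-algebra, hence Artinian and in particular zero-dimensional, which is the assertion. Alternatively, one can argue fibrewise without the global finiteness statement: $\rho^{-1}(s)\cong\BP^2$, the intersection $D_\infty\cap\rho^{-1}(s)$ is a line, and a closed subscheme of $\BP^2$ disjoint from a line is both projective and affine over $\kappa(s)$, hence finite. I do not anticipate a real obstacle here; the proof is essentially a packaging of "the complement of a relatively ample divisor is relatively affine'' together with "proper $+$ affine $\Rightarrow$ finite'', the only subtlety being to phrase the fibre statement scheme-theoretically as above.
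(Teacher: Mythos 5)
Your proof is correct and follows the same idea the paper sketches (the paper offers only the one-line justification that the support is disjoint from the relatively ample divisor $D_\infty$): the fibers of $Z\to S$ are projective subschemes of $\BP^2$ avoiding a line, hence affine and proper, hence finite. Your packaging via "affine $+$ proper $\Rightarrow$ finite," including the careful upgrade from set-theoretic to scheme-theoretic disjointness from $D_\infty$, is a valid and complete write-up of exactly that argument.
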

Using \cite[Theorem 18.8.5]{FOAG}, this further yields:
\begin{corollary}\label{cor:dirimA}
Let $\calF\in {\rm Coh}_{\sf c}(X)$. Then all higher direct images
$R^i\pi_*\calF$ are identically zero. In particular, any exact sequence
\[
0\to \calF_1 \to \calF_2 \to \calF_3 \to 0
\]
in $\calF\in {\rm Coh}_{\sf c}(X)$ induces an exact sequence 
\[
0\to \pi_* \calF_1\to \pi_*\calF_2 \to \pi_*\calF_3 \to 0 
\]
in ${\rm Coh}(S)$. 

Moreover, $\pi_*\calF=0$ if and only if  $\calF=0$. 
\end{corollary}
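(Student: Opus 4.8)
The plan is to reduce all three assertions to the single statement that, after passing to the scheme-theoretic support of $\calF$, the projection to $S$ becomes a finite morphism, so that push-forward is exact, faithful, and preserves coherence.

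First I would set up this reduction. Let $Z\subset X$ be the scheme-theoretic support of $\calF$. Since every object of ${\rm Coh}_{\sf c}(X)$ is, by definition, a coherent sheaf on $Y$ whose set-theoretic support lies in $X$, the subscheme $Z$ is closed in $Y$, hence proper over $S$ because $\rho\colon Y\to S$ is projective. By \zcref{lem:supplemm} the induced morphism $g\colon Z\to S$ has zero-dimensional scheme-theoretic fibers, so it is quasi-finite; being also proper, $g$ is finite (see \cite[Theorem 18.8.5]{FOAG}), and in particular affine. Writing $\iota\colon Z\hookrightarrow X$ for the closed immersion and $\calG$ for $\calF$ viewed as a coherent $\calO_Z$-module (so that $\calF=\iota_*\calG$ and $g=\pi\circ\iota$), exactness of $\iota_*$ identifies $R^i\pi_*\calF$ with $R^i g_*\calG$ for every $i$; since $g$ is affine the latter vanishes for $i>0$, and since $g$ is finite $\pi_*\calF=g_*\calG$ is coherent. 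This gives the first assertion.

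The remaining two claims are then formal. For a short exact sequence $0\to\calF_1\to\calF_2\to\calF_3\to 0$ in ${\rm Coh}_{\sf c}(X)$, the sheaves $\calF_1$ and $\calF_3$ again lie in ${\rm Coh}_{\sf c}(X)$, their supports being contained in that of $\calF_2$, so $R^1\pi_*\calF_1=0$ by the first part; feeding this into the long exact sequence of higher direct images collapses it to the desired short exact sequence $0\to\pi_*\calF_1\to\pi_*\calF_2\to\pi_*\calF_3\to 0$ in ${\rm Coh}(S)$ (equivalently, one may simply invoke exactness of $g_*$ for the affine morphism $g$). For the last claim, $\calF=0$ trivially forces $\pi_*\calF=0$; conversely, if $\pi_*\calF=0$ then $g_*\calG=0$, and since $g$ is affine one has $\calG(g^{-1}(U))=(g_*\calG)(U)=0$ for every affine open $U\subseteq S$, with the affine opens $g^{-1}(U)$ covering $Z$, whence $\calG=0$ and $\calF=0$.

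I do not expect a genuine obstacle: the real work is contained in \zcref{lem:supplemm} and in the cited finiteness theorem. The only subtlety worth flagging is that $\pi\colon X\to S$ is itself not proper, so proper-pushforward results cannot be applied to $\pi$ directly; it is essential to first pass to $Z$, and to use the compact-support hypothesis (equivalently, the extension over the projective completion $Y$) precisely in order to know that $Z$ is proper over $S$, which is what upgrades the quasi-finiteness supplied by \zcref{lem:supplemm} to finiteness.
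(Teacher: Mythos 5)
Your proposal is correct and follows essentially the same route as the paper, whose entire proof is to combine \zcref{lem:supplemm} (zero-dimensional fibers of the support over $S$) with the cited vanishing theorem for higher direct images of projective morphisms with fibers of bounded dimension. The only cosmetic difference is the mechanism you use to extract the vanishing — you upgrade quasi-finite plus proper to finite, hence affine, and invoke vanishing of higher pushforwards for affine morphisms, rather than quoting dimensional vanishing directly — and your observation that one must work on the proper support $Z\subset Y$ rather than with the non-proper $\pi\colon X\to S$ is exactly the point of defining $\mathrm{Coh}_{\sf c}(X)$ via the completion $Y$.
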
 

Given a sheaf $\calF \in {\rm Coh}_{\sf c}(X)$, we denote by $z_i \otimes
{\rm id}_\calF: \calF \to \calF\otimes \pi^*\LL_i$ the multiplication map
by the tautological section $z_i \in H^0(\calO_Y(1)\otimes \pi^*\LL_i)$.
As a consequence of \zcref{cor:dirimA}, the image of $\calF$ via the
tilting functor \eqref{eq:tiltingA} will be the ${\calB}^{\sf op}$-module
$(\calE_i, a_i, b_i)$, where $\calE_i = \pi_*(\calF \otimes \calO_Y(-i))$
and
\[
a_i = \pi_*(z_i\otimes {\rm id}_{\calF(-2)}), \qquad 
b_i = \pi_*(z_i \otimes {\rm id}_{\calF(-1)} )
\]
for $0\leq i \leq 2$. In particular, since the support of $\calF$ is
contained in $X$, $a_0$ and $b_0$ are isomorphisms. Hence $\calF$ belongs
to the subcategory $\calC$.  In conclusion, using \zcref{prop:higgscat},
the above construction yields a functor
\be\label{eq:cohtohiggs} 
h:{\rm Coh}_{\sf c}(X)\to {\rm Higgs}(S; \LL_1, \LL_2).
\ee

Conversely, given Higgs sheaf  $\calH \coloneq (\calE, \Phi_1, \Phi_2)$ on $S$,
let ${\bf M}(\calH)$ be the  monad complex
\be\label{eq:monadA}
\pi^*(\calE\otimes \LL_1^{-1} \otimes \LL_2^{-1}) \otimes \calO_Y(-2)
 \xrightarrow{d_{-1}} \pi^*(\calE \otimes (\LL_1^{-1}\oplus \LL_2^{-1})) \otimes 
\calO_Y(-1) \xrightarrow{d_0} \pi^*\calE,
\ee
of amplitude $[-2, \ 0]$, where the maps are given by 
\be\label{eq:monadB} 
d_{-1} = \left(\begin{array}{c} -z_2 +z_0 \pi^*\Phi_2 \\ 
z_2-z_0\pi^*\Phi_1 \end{array}\right)
\qquad 
d_0 = \left(z_1 - z_0 \pi^*\Phi_1 \ \
z_2-z_0\pi^*\Phi_2  \right).
\ee
\begin{lemma}\label{lem:monadA} 
Assume that $\calE$ is a nonzero coherent locally free sheaf on $S$.
Then the following hold:
\begin{itemize}
\item[$(i)$] The cohomology of ${\bf M}(\calH)$ is concentrated in degree zero and the
$0$-th cohomology sheaf $\calF$ is a purely two dimensional reflexive 
sheaf in ${\rm Coh}_{\sf c}(X)$. 
\item[$(ii)$] One has a natural isomorphism of Higgs sheaves $\calH\to h(\calF)$.
\end{itemize}
\end{lemma}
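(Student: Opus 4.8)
The plan is to analyze the monad complex ${\bf M}(\calH)$ locally on $S$, where it splits into a family of Beilinson-type monads on the fibers, and then globalize. First I would check that the composition $d_0 \circ d_{-1} = 0$: this is a direct computation using the relations among the $z_i$ (namely $z_i z_j$ commute as sections of $\calO_Y(2) \otimes \pi^*(\LL_i \otimes \LL_j)$) together with the Higgs relation $(\Phi_1 \otimes \mathbf 1_{\LL_2})\circ\Phi_2 = (\Phi_2 \otimes \mathbf 1_{\LL_1})\circ\Phi_1$; so ${\bf M}(\calH)$ is genuinely a complex. Next, working over a trivializing open $U \subset S$ for $\calE$, $\LL_1$, $\LL_2$, the restriction of ${\bf M}(\calH)$ to $\rho^{-1}(U) \cong U \times \IP^2$ becomes, at each point $s \in U$, exactly the (fiberwise) Beilinson monad resolving the structure sheaf of the subscheme of $\IP^2$ cut out by the linearized data $(z_1 - \lambda_1 z_0, z_2 - \lambda_2 z_0)$ with $\lambda_i$ the eigenvalue-data of $\Phi_i(s)$ — more precisely the pushforward along $\IP^2 \to \mathrm{pt}$ recovers the cokernel presentation. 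The key point is that since the support lies in $X = Y \setminus D_\infty$ (the locus $z_0 \neq 0$), the maps $d_{-1}$ and $d_0$ are fiberwise injective (resp. surjective), so by semicontinuity and the local-freeness of $\calE$ the cohomology of ${\bf M}(\calH)$ is concentrated in degree $0$ and the $0$-th cohomology $\calF$ is flat over $S$ with fibers purely two-dimensional of the expected length.

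For part $(i)$, once concentration in degree zero is established, I would identify $\calF$ as $\cok(d_{-1})/\operatorname{im}(\text{from }d_0)$-type quotient; that $\calF$ is set-theoretically supported in $X$ follows because $d_0$ becomes surjective precisely on $\{z_0 \neq 0\}$, so $\calF$ vanishes on $D_\infty$. Purity in dimension two and reflexivity I would get from the monad presentation: $\calF$ is the cohomology of a short locally free complex, so it has no subsheaf supported in dimension $\leq 1$ (any such would force a nonzero map into a lower-dimensional piece, contradicting the fiberwise structure), and reflexivity follows because $\calF$ is, fiberwise over $S$, a pushforward of $\calO_Z$ for $Z$ a length-$r$ scheme finite over each point — alternatively one checks the monad is self-dual up to twist, using that $\LL_1 \otimes \LL_2$ and the $S^2(\LL)$ term make the complex Serre-dual to itself (this is where the precise form of the differentials matters). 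That $\calF \in {\rm Coh}_{\sf c}(X)$ is immediate from \zcref{lem:supplemm} and the support statement.

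For part $(ii)$, the natural isomorphism $\calH \to h(\calF)$ is obtained by computing $h(\calF) = (\pi_*\calF, \pi_*(z_1 \otimes \mathrm{id}), \pi_*(z_2 \otimes \mathrm{id}))$ directly from the monad. Using \zcref{cor:dirimA}, pushforward is exact on ${\rm Coh}_{\sf c}(X)$, so $\pi_*\calF = \cok(\pi_* d_0 \text{-style})$; unwinding \eqref{eq:monadB} and using $\pi_*(z_0 \otimes \mathrm{id}) = \mathrm{id}$ (since $z_0$ trivializes away from $D_\infty$) one reads off $\pi_*\calF \cong \calE$ canonically, and then $\pi_*(z_i \otimes \mathrm{id}_\calF) = \Phi_i$ under this identification, while the $\calB^{\sf op}$-module produced lands in the subcategory $\calC$ so corresponds to a genuine Higgs sheaf via \zcref{prop:higgscat}. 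The main obstacle I anticipate is the concentration-in-degree-zero argument globalized over $S$: one must ensure that the cohomology in degrees $-2$ and $-1$ vanishes not just fiberwise but as sheaves on $Y$, which requires a base-change/semicontinuity argument exploiting that the fiberwise cohomology is supported entirely in degree $0$ with locally constant Euler characteristic — the local-freeness hypothesis on $\calE$ is exactly what makes this run, and is the reason the lemma is stated under that assumption rather than for general torsion-free $\calE$.
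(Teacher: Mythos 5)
Your overall architecture (check ${\bf M}(\calH)$ is a complex, analyze it fiberwise as a Beilinson/Koszul-type monad, establish concentration in degree zero, read off purity and reflexivity from the three-term locally free resolution, and prove (ii) by pushing the monad forward and using $R\rho_*{\bf M}(\calH)\simeq\calE$) matches the paper's, and your part (ii) is essentially the paper's argument. However, part (i) has a genuine gap at its crux: the vanishing of the \emph{middle} cohomology sheaf $\ker(d_0)/\operatorname{im}(d_{-1})$. Your claim that "$d_{-1}$ and $d_0$ are fiberwise injective (resp.\ surjective)" is false precisely on the degeneracy divisor $\Delta=\{\det(z_1-z_0\pi^*\Phi_1)\det(z_2-z_0\pi^*\Phi_2)=0\}$ — which is nonempty and contains the entire support of $\calF$ — and pointwise exactness off a divisor plus "semicontinuity" does not force the middle cohomology sheaf to vanish on that divisor. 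You need an extra mechanism: the paper observes that the middle cohomology is a subsheaf of $\cok(d_{-1})$, which is torsion-free, yet is set-theoretically supported on $\Delta_{\sf red}$, hence must be zero. (An alternative would be an honest Koszul regular-sequence argument for the commuting pair $z_1-z_0\Phi_1$, $z_2-z_0\Phi_2$, but you give neither.) Note also that flatness of $\calF$ over $S$, which you assert in passing, is not immediate and is not needed here; the paper only establishes flatness of $\calF^{**}$ later, by a separate argument.

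A second, more local error: your justification that $\operatorname{supp}\calF\subset X$ is inverted. If $d_0$ were "surjective precisely on $\{z_0\neq 0\}$," then $\calF=\cok(d_0)$ would be supported on $D_\infty=\{z_0=0\}$ — the opposite of what you want. The correct statement is that $d_0$ fails to be surjective exactly on the spectral locus inside $X$, while the restriction ${\bf M}(\calH)|_{D_\infty}$ is exact because $z_1,z_2$ have no common zeros along $D_\infty$; this is what forces $\calF|_{D_\infty}=0$. Your purity/reflexivity step is fine once concentration in degree zero is secured (it is exactly \cite[Proposition 1.1.10]{HL_moduli} applied to the three-term resolution), and the self-duality remark is a reasonable alternative, but as written the two points above leave the core of part (i) unproved.
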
 

\begin{proof} 
For $1\leq i\leq 2$, let $\Delta \subset Y$ be the effective  divisor defined by 
\[
 \det (z_1- z_0 \pi^*\Phi_1) \det(z_2- z_0\pi^*\Phi_2)=0
\]
and let $U = Y \setminus \Delta_{\sf red} $.  Then the restriction
$(d_{-1})_y$ to any point $y \in U$ is injective. Since the target and
the domain of $d_{-1}$ are locally free, this implies that $d_{-1}$
is injective.  Moreover, $\cok(d_{-1})$ is a torsion free sheaf on $Y$.

The middle cohomology sheaf of ${\bf M}(\calH)$ is a subsheaf of
${\cok}(d_{-1})$, hence it has to be torsion-free. At the same time,
given the explicit expressions of the differentials, it is straightforward
to check that the middle cohomology sheaf has to be set theoretically
supported on $\Delta_{\sf red}$.  Therefore it must be identically zero.
In conclusion the cohomology of ${\bf M}(\calH)$ is concentrated in
degree zero.

Let $\calF$ denote the degree zero cohomology sheaf of ${\bf
M}(\calH)$. One immediately checks that $\ch_k(\calF)=0$ for $0\leq
k \leq 1$, hence $\calF$ has two dimensional support. Furthermore,
it is also straightforward to check that the restriction  ${\bf
M}(\calH)|_{D_\infty}$ is exact, hence the support of $\calF$ is contained
in $X$. Finally, by construction, $\calF$ has a three term locally free
resolution, which implies that it is purely two dimensional and reflexive
by \cite[Proposition 1.1.10]{HL_moduli}.

The second claim is tautological. First note that the canonical projection
${\bf M}(\calH)\to \calF$ is a quasi-isomorphism, and $R\pi_*{\bf
M}(\calH)\simeq \calE$.  Therefore, by pushforward, one obtains an
isomorphism $\calE \to \pi_*\calF$.  Moreover, the map $z_i \otimes
{\rm id}_\calF: \calF \to \calF \otimes \calO_Y(1) \otimes \pi^*\LL_i$,
fits in the commutative diagram
\[
\xymatrix{ 
{\bf M}(\calH) \ar[rr]
\ar[d]_-{ z_i\otimes {\rm id}_{{\bf M}(\calH)}} 
&&  \calF \ar[d]
\ar[d]^-{z_i \otimes {\rm id}_\calF} 
\\
{\bf M}(\calH)\otimes \calO_Y(1)\otimes \pi^*\LL_i \ar[rr]&& \calF \otimes \calO_Y(1)\otimes \pi^*\LL_i.
\\
}
\]
Given the expression of $d_0$, by pushforward, one obtains the following
commutative diagram
\[
\xymatrix{ 
\calE \ar[rr]\ar[d]_-{ \Phi_i} &&  \pi_*(\calF)
\ar[d]^-{\pi_*(z_i \otimes {\rm id}_\calF)} \\
\calE\otimes \LL_i \ar[rr]&& \pi_*(\calF)\otimes \LL_i,
}
\]
where the horizontal arrows are isomorphisms. 
\end{proof}

Next note that the monad construction is functorial i.e.\ any morphism
of Higgs sheaves on $S$ induces a morphism between the associated monad
complexes, which induces morphisms between the associated cohomology
sheaves.  Then we have:
\begin{lemma}\label{lem:monadB}
Let $\calH_1 \to \calH_2$ be an injective morphism of locally free
Higgs sheaves on $S$.  Then the induced morphism $f:\calF_1 \to \calF_2$
is also injective.
\end{lemma}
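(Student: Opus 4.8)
The plan is to realize $f$ as the map induced on degree‑zero cohomology sheaves by a short exact sequence of monad complexes, and then eliminate $\ker f$ using the purity of $\calF_1$ established in \zcref{lem:monadA}. First I would complete the injection $\calH_1 \hookrightarrow \calH_2$ to a short exact sequence $0 \to \calH_1 \to \calH_2 \to \calQ \to 0$ in the abelian category ${\rm Higgs}(S;\LL_1,\LL_2)$, with $\calQ$ the (not necessarily locally free) quotient Higgs sheaf. The key observation is that the monad construction $\calH \mapsto {\bf M}(\calH)$ of \eqref{eq:monadA}--\eqref{eq:monadB} is an exact additive functor from ${\rm Higgs}(S;\LL_1,\LL_2)$ to complexes of coherent sheaves on $Y$: each term of ${\bf M}(\calH)$ is obtained from the underlying sheaf $\calE$ by tensoring with a fixed locally free $\calO_S$-module, pulling back along the flat morphism $\pi$, and twisting by $\calO_Y(-j)$ --- all exact operations --- while the differentials $d_{-1}, d_0$ are manifestly natural in $\calH$. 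Applying ${\bf M}$ to the sequence above thus yields a short exact sequence of complexes $0 \to {\bf M}(\calH_1) \to {\bf M}(\calH_2) \to {\bf M}(\calQ) \to 0$.

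Next I would run the associated long exact sequence of cohomology sheaves. By \zcref{lem:monadA}, since $\calE_1$ and $\calE_2$ are locally free, the complexes ${\bf M}(\calH_1)$ and ${\bf M}(\calH_2)$ are concentrated in degree zero, with $\mathscr{H}^0({\bf M}(\calH_i)) = \calF_i$ purely two-dimensional. The long exact sequence therefore collapses to $\mathscr{H}^j({\bf M}(\calQ)) = 0$ for $j \leq -2$ together with a four-term exact sequence $0 \to \mathscr{H}^{-1}({\bf M}(\calQ)) \to \calF_1 \xrightarrow{f} \calF_2 \to \mathscr{H}^0({\bf M}(\calQ)) \to 0$. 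In particular $\ker f \cong \mathscr{H}^{-1}({\bf M}(\calQ))$, so it suffices to show that this cohomology sheaf vanishes.

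To finish, let $S^\circ \subseteq S$ be the dense open locus over which $\calQ$ is locally free; its complement is the degeneracy locus of the injection $\calE_1 \to \calE_2$, a proper closed subset of the surface $S$ and hence of dimension at most one. Since ${\bf M}$ commutes with restriction to open subsets, over $\pi^{-1}(S^\circ)$ the complex ${\bf M}(\calQ)$ is the monad complex of the locally free Higgs sheaf $\calQ|_{S^\circ}$, so \zcref{lem:monadA} again forces it to be concentrated in degree zero; thus $\mathscr{H}^{-1}({\bf M}(\calQ))$, equivalently $\ker f$, vanishes over $\pi^{-1}(S^\circ)$. Consequently $\ker f$ is a subsheaf of $\calF_1$ supported on $\operatorname{Supp}(\calF_1) \cap \pi^{-1}(S \setminus S^\circ)$. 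Since $\operatorname{Supp}(\calF_1) \to S$ has zero-dimensional fibers by \zcref{lem:supplemm}, this set has dimension at most $\dim(S \setminus S^\circ) \leq 1$; but $\calF_1$ is purely two-dimensional and therefore admits no nonzero subsheaf of dimension $\leq 1$. Hence $\ker f = 0$ and $f$ is injective.

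The only steps that go beyond routine bookkeeping are the exactness of the functor ${\bf M}$ --- which hinges on flatness of $\pi$ and on the differentials being natural transformations --- and the identification of the termwise cokernel of ${\bf M}(\calH_1) \to {\bf M}(\calH_2)$ with ${\bf M}(\calQ)$ equipped with its induced Higgs structure. I expect this identification, together with a glance at the degenerate case where $\calE_1 \to \calE_2$ is generically an isomorphism (so $\calQ$ is torsion, $S^\circ = S \setminus \operatorname{Supp}(\calQ)$ is still dense, and ${\bf M}(\calQ)|_{\pi^{-1}(S^\circ)} = 0$ trivially), to be the fiddliest points, though neither is genuinely difficult.
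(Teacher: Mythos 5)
Your proof is correct, but it takes a genuinely different route from the paper's. The paper's argument is a short pushforward computation: since $\calK=\ker f$ is a subsheaf of $\calF_1\in{\rm Coh}_{\sf c}(X)$, left-exactness of $\pi_*$ together with the canonical identifications $\calE_i\simeq\pi_*\calF_i$ from part (ii) of \zcref{lem:monadA} gives $\pi_*\calK=\ker(e)=0$, and \zcref{cor:dirimA} (the pushforward is conservative on ${\rm Coh}_{\sf c}(X)$) then forces $\calK=0$. You instead complete the injection to a short exact sequence of Higgs sheaves, use termwise exactness of the monad functor and the long exact sequence to identify $\ker f$ with $\mathscr{H}^{-1}({\bf M}(\calQ))$, and kill it by combining generic local freeness of $\calQ$ with the purity of $\calF_1$ and the fibre-dimension bound of \zcref{lem:supplemm}. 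Both arguments are sound. The paper's is shorter and sidesteps the two points you rightly flag as delicate (exactness of ${\bf M}$, and applying \zcref{lem:monadA} after restriction to the open locus $S^\circ$, which works because that lemma's degree-concentration argument is local); yours has the merit of staying entirely on $Y$ and of running on exactly the mechanism the paper deploys immediately afterwards in \zcref{lem:monadC, lem:monadD}, so it integrates naturally with the surrounding proofs. Two cosmetic remarks: the relevant open subsets of $Y$ are $\rho^{-1}(S^\circ)$ rather than $\pi^{-1}(S^\circ)$, and the locus where $\calQ$ fails to be locally free is its singular locus rather than literally the degeneracy locus of $\calE_1\to\calE_2$ (they differ when the ranks are unequal); neither affects the argument.
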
 

\begin{proof}
For $1\leq i \leq 2 $, let $\calE_i$ be the underlying locally free sheaf
of $\calH_i$.  Let $e:\calE_1 \to \calE_2$ be the associated morphism
of sheaves on $S$, which is compatible with the Higgs sheaf structure. 
Note the tautological commutative diagram
\[
\xymatrix{ 
\calE_1 \ar[r]^-{e} \ar[d] & \calE_2 \ar[d]\\
\pi_*\calF_1 \ar[r]^-{\pi_*(f)} & \pi_*\calF_2 ~\\
}
\]
where the vertical arrows are the canonical isomorphisms provided by part
ii of \zcref{lem:monadA}.  By pushforward, this yields an exact sequence
\[
0\to \pi_*(\calK) \to \calE_1 \xrightarrow{e} \calE_2
\]
where $\calK \coloneq \ker(\calF_1 \to \calF_2)$. Since $\calK$ belongs
to ${\rm Coh}_{\sf c}(X)$, it must be identically zero by \zcref{cor:dirimA}.
\end{proof} 

\begin{lemma}\label{lem:monadC} 
Let $\calH$ be a nonzero coherent torsion free Higgs sheaf on $S$. 
Then the cohomology of the associated monad complex ${\bf M}(\calH)$ 
is concentrated in degree zero. Moreover, the degree zero cohomology sheaf 
$\calF$ is purely two dimensional and belongs to ${\rm Coh}_{\sf c}(X)$. 
\end{lemma}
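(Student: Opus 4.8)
The plan is to reduce to the locally free case, \zcref{lem:monadA}, by comparing $\calH=(\calE,\Phi_1,\Phi_2)$ with its reflexive hull. Since $S$ is a smooth surface, $\calE^{\vee\vee}$ is locally free, the canonical map $\calE\hookrightarrow\calE^{\vee\vee}$ is injective, and the Higgs fields extend uniquely to morphisms $\Phi_i^{\vee\vee}\colon\calE^{\vee\vee}\to\calE^{\vee\vee}\otimes\LL_i$; these still commute, since their commutator is a morphism into the torsion free sheaf $\calE^{\vee\vee}\otimes\LL_1\otimes\LL_2$ which vanishes on the dense open locus where $\calE=\calE^{\vee\vee}$. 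Hence $\calH^{\vee\vee}\coloneq(\calE^{\vee\vee},\Phi_1^{\vee\vee},\Phi_2^{\vee\vee})$ is a locally free Higgs sheaf fitting into a short exact sequence of Higgs sheaves
\[
0\to\calH\to\calH^{\vee\vee}\to\calQ\to0,
\]
in which $\calQ$ has zero dimensional support on $S$, because $\calE\to\calE^{\vee\vee}$ is an isomorphism away from the finite set where $\calE$ fails to be locally free.

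Next I would observe that the monad construction $\calH\mapsto{\bf M}(\calH)$ is exact on the underlying complexes: each of the three terms of ${\bf M}(\calH)$ is obtained from the underlying $\calO_S$-module by tensoring with a sum of line bundles on $S$, pulling back along $\pi$, and tensoring with a fixed line bundle on $Y$, and all of these operations are exact. Applying ${\bf M}(-)$ to the sequence above therefore gives a short exact sequence of complexes on $Y$, and hence a long exact sequence in cohomology. By \zcref{lem:monadA} the cohomology of ${\bf M}(\calH^{\vee\vee})$ is concentrated in degree zero, where it is a nonzero, purely two dimensional reflexive sheaf $\calF_0\in{\rm Coh}_{\sf c}(X)$ whose support is two dimensional.

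The main input is the monad of the zero dimensional Higgs sheaf $\calQ$: I claim the cohomology of ${\bf M}(\calQ)$ is also concentrated in degree zero, with $H^0({\bf M}(\calQ))$ a finite length sheaf supported on $X$. I would prove this by dévissage. A finite length Higgs sheaf is an iterated extension of simple ones, namely the skyscrapers $\calO_p$ at closed points $p\in S$ on which $\Phi_1,\Phi_2$ act by scalars $c_1,c_2$; by the long exact sequences attached to such a filtration (and exactness of ${\bf M}$) it suffices to treat $\calQ=\calO_p$. In that case all three terms of ${\bf M}(\calO_p)$ are supported on the fibre $\pi^{-1}(p)\cong\IP^2$, and there, after trivialising the $\LL_i$ along $p$, ${\bf M}(\calO_p)$ is the twisted Koszul complex
\[
\calO_{\IP^2}(-2)\longrightarrow\calO_{\IP^2}(-1)^{\oplus2}\longrightarrow\calO_{\IP^2}
\]
associated with the regular sequence of linear forms $z_1-c_1z_0,\ z_2-c_2z_0$; it is exact in degrees $-2$ and $-1$, with $H^0=\calO_q$ where $q=[1\!:\!c_1\!:\!c_2]$ lies in $\pi^{-1}(p)\cap X$. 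The filtration then forces ${\bf M}(\calQ)$ to be concentrated in degree zero, with $H^0({\bf M}(\calQ))$ an iterated extension of length one skyscrapers at points of $X$. I expect this dévissage — identifying the monad of a length one Higgs sheaf with a Koszul complex on a $\IP^2$-fibre and propagating through extensions — to be the one genuinely computational point; the rest is formal.

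Granting this, the long exact sequence attached to $0\to{\bf M}(\calH)\to{\bf M}(\calH^{\vee\vee})\to{\bf M}(\calQ)\to0$ collapses, since the last two complexes have cohomology only in degree zero, to an exact sequence
\[
0\to H^0({\bf M}(\calH))\to\calF_0\to H^0({\bf M}(\calQ))\to0
\]
together with $H^{-2}({\bf M}(\calH))=H^{-1}({\bf M}(\calH))=0$. Thus the cohomology of ${\bf M}(\calH)$ is concentrated in degree zero, and $\calF\coloneq H^0({\bf M}(\calH))$ is the kernel of a morphism from the purely two dimensional sheaf $\calF_0$ to the zero dimensional sheaf $H^0({\bf M}(\calQ))$. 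Such a morphism cannot be injective — a sheaf with two dimensional support does not embed into one with zero dimensional support — so $\calF\neq0$; being a nonzero subsheaf of $\calF_0$, it is purely two dimensional with the same two dimensional support, and it belongs to ${\rm Coh}_{\sf c}(X)$ since this category is closed under subsheaves. This establishes the lemma.
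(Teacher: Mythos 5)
Your proof is correct, but it runs in the opposite direction from the paper's. The paper chooses a two-term locally free resolution $\calH_{-1}\to\calH_0\to\calH$ \emph{in the category of Higgs sheaves}, applies the exact triangle ${\bf M}(\calH_{-1})\to{\bf M}(\calH_0)\to{\bf M}(\calH)$, and uses \zcref{lem:monadA,lem:monadB} to present $\calF$ as the quotient $\calF_0/\calF_{-1}$ of two purely two-dimensional reflexive sheaves, from which purity follows via \cite[Prop.~1.1.10]{HL_moduli}. You instead embed $\calH$ into its reflexive hull $\calH^{\vee\vee}$ (locally free because $S$ is a smooth surface), so that $\calF$ comes out as a \emph{subsheaf} of the reflexive monad cohomology $\calF_0=H^0({\bf M}(\calH^{\vee\vee}))$ with zero-dimensional quotient $H^0({\bf M}(\calQ))$; purity and membership in ${\rm Coh}_{\sf c}(X)$ are then immediate for a nonzero subsheaf. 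The price you pay is the extra lemma on the monad of a finite-length Higgs sheaf, but your dévissage to skyscrapers and the identification of ${\bf M}(\calO_p)$ with the Koszul complex of the regular sequence $z_1-c_1z_0,\,z_2-c_2z_0$ on the fibre $\IP^2$ is sound (note the second entry of $d_{-1}$ in \eqref{eq:monadB} has a typo and should read $z_1-z_0\pi^*\Phi_1$, which is what makes the complex Koszul). What your route buys: it avoids having to produce a locally free resolution in the category of Higgs sheaves (a point the paper asserts without construction), it does not need the injectivity statement of \zcref{lem:monadB}, and it yields as a by-product the exact sequence $0\to\calF\to\calF_0\to(\text{0-dimensional})\to 0$, which anticipates the sequence \eqref{eq:FseqA} used later. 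What the paper's route buys: it feeds directly into \zcref{lem:monadD}, since the same left-resolution formalism handles arbitrary exact sequences of torsion-free Higgs sheaves, whereas the reflexive-hull trick is special to the single-object statement.
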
 

\begin{proof} 
Let ${\calH}_{-1}\to {\calH}_{0} \to \calH$ be a locally free resolution
in the category of Higgs sheaves on $S$. Then we have an exact triangle
\[
{\bf M}({\calH}_{-1})\to {\bf M}({\calH}_{0}) \to {\bf M}(\calH)
\]
in $D^b(Y)$. Given \zcref{lem:monadA, lem:monadB}, the associated
long exact sequence shows that the cohomology of ${\bf M}(\calH)$ is
concentrated in degree zero. Moreover, we have an exact sequence of
degree zero cohomology sheaves
\[
0\to \calF_{-1}\to \calF_{0} \to \calF\to 0. 
\]
Since $\calF_{-1}$ and $\calF_0$ are purely two-dimensional and reflexive,
this implies that $\calF$ is purely two dimensional via \cite[Proposition
1.10]{HL_moduli}.
\end{proof} 

Finally, by analogy with \zcref{lem:monadB}, we also have: 
\begin{lemma}\label{lem:monadD}
Let 
\be\label{eq:Hexseq}
0 \to \calH_1 \to \calH_2 \to \calH_3 \to 0 
\ee
be an exact sequence of torsion free Higgs sheaves on $S$.  Then the
degree zero cohomology sheaves of the associated monad complexes fit in
an exact sequence 
\be\label{eq:Fexseq}
0 \to \calF_1 \to \calF_2 \to \calF_3 \to 0 
\ee
on $Y$. Moreover, the image of the complex \eqref{eq:Fexseq} via
the functor \eqref{eq:cohtohiggs} is tautologically isomorphic to
\eqref{eq:Hexseq}.
\end{lemma}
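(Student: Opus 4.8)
The plan is to lift the exact sequence \eqref{eq:Hexseq} to a short exact sequence of monad complexes on $Y$, to read off \eqref{eq:Fexseq} from the associated long exact cohomology sequence, and then to recover \eqref{eq:Hexseq} by applying the functor $h$ of \eqref{eq:cohtohiggs}. First I would note that \eqref{eq:Hexseq} restricts to a short exact sequence $0\to\calE_1\to\calE_2\to\calE_3\to 0$ of the underlying coherent $\calO_S$-modules. Each term of the monad complex \eqref{eq:monadA} is obtained from its input sheaf by pulling back along the flat morphism $\pi\colon Y\to S$ and tensoring with a fixed line bundle on $Y$, so the maps induced by functoriality of the monad construction assemble into a sequence
\[
0 \to {\bf M}(\calH_1) \to {\bf M}(\calH_2) \to {\bf M}(\calH_3) \to 0
\]
of complexes on $Y$ which is exact in every degree, hence a distinguished triangle in $D^b(Y)$. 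By \zcref{lem:monadC} each ${\bf M}(\calH_i)$ has cohomology concentrated in degree zero, equal there to the purely two-dimensional sheaf $\calF_i\in{\rm Coh}_{\sf c}(X)$. The long exact cohomology sequence of the triangle therefore collapses to the short exact sequence $0\to\calF_1\to\calF_2\to\calF_3\to 0$ on $Y$, which is \eqref{eq:Fexseq}; by construction its maps are those induced from \eqref{eq:Hexseq}.

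For the second assertion I would apply $h$ to \eqref{eq:Fexseq}. Since $\calF_i\in{\rm Coh}_{\sf c}(X)$, \zcref{cor:dirimA} shows that $\pi_*$ — hence $h$ — is exact here, so one obtains a short exact sequence $0\to h(\calF_1)\to h(\calF_2)\to h(\calF_3)\to 0$ of Higgs sheaves on $S$. It remains to identify this with \eqref{eq:Hexseq}, and for that I would invoke a natural isomorphism $\calH_i\xrightarrow{\sim}h(\calF_i)$. Although part $(ii)$ of \zcref{lem:monadA} is stated only for locally free Higgs sheaves, its proof uses solely that ${\bf M}(\calH)\to\calF$ is a quasi-isomorphism (which \zcref{lem:monadC} supplies in the torsion-free case) and the vanishing $R\pi_*\calO_Y(-1)=R\pi_*\calO_Y(-2)=0$, whence $R\pi_*{\bf M}(\calH)\simeq\calE$; neither ingredient involves local freeness, so the identical argument produces a natural isomorphism $\calH\xrightarrow{\sim}h(\calF)$ for every coherent torsion-free Higgs sheaf $\calH$. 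Naturality then forces the squares comparing $\calH_i\to\calH_{i+1}$ with $h(\calF_i)\to h(\calF_{i+1})$ to commute, so the image of \eqref{eq:Fexseq} under $h$ is identified with \eqref{eq:Hexseq}.

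The flatness bookkeeping for the monad terms and the collapse of the long exact sequence are routine; the one point that deserves care is precisely this last identification $h(\calF)\simeq\calH$ in the torsion-free generality. If one prefers not to re-examine the proof of \zcref{lem:monadA}, an equivalent route is to choose a locally free resolution $0\to\calG_{-1}\to\calG_0\to\calH\to 0$ in the category of Higgs sheaves on $S$ — available since $S$ is a smooth surface — use the exact triangle ${\bf M}(\calG_{-1})\to{\bf M}(\calG_0)\to{\bf M}(\calH)$ together with \zcref{lem:monadC} and the already established locally free case of \zcref{lem:monadA}$(ii)$, and conclude by the exactness of $h$ and the five lemma; this is the same mechanism already used in the proof of \zcref{lem:monadC}.
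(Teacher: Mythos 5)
Your proof is correct. The paper in fact states \zcref{lem:monadD} without proof (``by analogy with \zcref{lem:monadB}''), and the argument you supply is exactly the intended one: termwise exactness of the monad construction under flat pullback, collapse of the long exact sequence via \zcref{lem:monadC}, and exactness of the pushforward from \zcref{cor:dirimA}. Your observation that \zcref{lem:monadA}$(ii)$ extends verbatim to torsion-free Higgs sheaves once \zcref{lem:monadC} supplies the quasi-isomorphism ${\bf M}(\calH)\to\calF$ is the one point genuinely needing care, and you handle it correctly (with a sound fallback via locally free resolutions and the five lemma).
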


Now let $\calF$ be a purely two dimensional coherent sheaf on $Y$ with
compact support contained in $X$.  We will prove below that the direct
image $\rho_*\calF$ is a torsion free $\calO_S$-module.

In order to fix notation, the dual of a coherent sheaf $\calF$ of
dimension $d$ on $Y$ will be defined by
\[
\calF^* \coloneq {\mathcal Ext}^{4-d}(\calF, \calO_Y),
\]
as in \cite[Definition 1.1.7]{HL_moduli}.  The double dual of $\calF$ will
be denoted by $\calF^{**}$.  As shown in \cite[Lemma 1.1.8]{HL_moduli},
one has a natural morphism $\theta: \calF \to \calF^{**}$. Moreover,
$\theta$ is injective if $\calF$ is purely $d$-dimensional by
\cite[Proposition 1.1.10]{HL_moduli}.  Therefore, for any sheaf $\calF$
of pure dimension, we have an exact sequence 
\be\label{eq:FseqA} 
 0 \to \calF \xrightarrow {\theta} \calF^{**} \to \calG \to 0.
\ee

\begin{lemma}\label{lem:FlemmA} 
Let $\calF$ be a purely two dimensional coherent sheaf on $Y$.  Then
$\calG$ is zero dimensional. Moreover, $\calF^{**}$ admits a three term
locally free resolution.
\end{lemma}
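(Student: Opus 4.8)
The plan is to establish the two assertions separately, starting from the exact sequence \eqref{eq:FseqA}. For the first claim, that $\calG$ is zero dimensional, I would argue by purity and dimension counting. Since $\calF$ is purely two dimensional, both $\calF$ and $\calF^{**}$ have support of dimension $2$, so $\calG$ has dimension at most $2$. To rule out dimension $1$ and $2$, I would use the general fact that for a coherent sheaf of dimension $d$ on a smooth projective variety of dimension $N$, the double dual $\calF^{**} = \calExt^{N-d}(\calExt^{N-d}(\calF,\calO_Y),\calO_Y)$ and the natural map $\theta$ is an isomorphism in codimension one on the support, i.e.\ the cokernel $\calG$ has support of dimension $\le d-2$ (this is part of the standard reflexivity theory in \cite[Chapter 1]{HL_moduli}; concretely, $\calF$ and $\calF^{**}$ agree on the locus where $\calF$ is locally free, which is dense and open in the support with complement of dimension $\le d-2$). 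Since $d=2$ here, this immediately forces $\dim \calG \le 0$.

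For the second claim, that $\calF^{**}$ admits a three-term locally free resolution on $Y$, I would invoke the homological characterization of reflexive sheaves via projective dimension. On the smooth fourfold $Y$, every coherent sheaf has a finite locally free resolution; the length of the minimal such resolution at a point is governed by depth via the Auslander--Buchsbaum formula. Since $\calF^{**}$ is reflexive of dimension $2$ on the $4$-dimensional $Y$, at every point of its support it has depth $\ge 2$, hence projective dimension $\le 2$; away from the support it is zero, hence free of rank zero. Therefore the global minimal locally free resolution has length at most $2$, i.e.\ is of the form $0 \to \calV_{-2} \to \calV_{-1} \to \calV_0 \to \calF^{**} \to 0$. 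The only slightly delicate point is to pass from the pointwise bound on projective dimension to a genuine global locally free resolution of that length; this follows because $Y$ is projective (so finite locally free resolutions exist globally) and the kernel of a surjection from a locally free sheaf onto a sheaf of projective dimension $\le 2$, after two steps, is locally free by the pointwise criterion.

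The main obstacle, and the step requiring the most care, is the reflexivity input for the first claim: one must be sure that $\theta:\calF\to\calF^{**}$ is an isomorphism in codimension one \emph{on the support}, not merely that $\theta$ is injective (which is all \cite[Proposition 1.1.10]{HL_moduli} is quoted for). I would handle this by restricting to the dense open locus $U\subset \operatorname{supp}(\calF)$ where $\calF$ is locally free: there $\calF|_U$ is already reflexive, so $\theta|_U$ is an isomorphism, and $\operatorname{supp}(\calF)\setminus U$ has dimension $\le 0$ by purity together with the fact that the non-locally-free locus of a torsion-free sheaf on its (two-dimensional, reduced) support is of codimension $\ge 2$. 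One subtlety is that $\operatorname{supp}(\calF)$ need not itself be smooth; but it suffices that it is pure of dimension $2$ and that $\calF$ restricted to it is torsion-free, which holds by purity, and then the standard argument that a torsion-free sheaf on a variety fails to be locally free only in codimension $\ge 1$ — improved to codimension $\ge 2$ when one additionally knows $\calF$ is reflexive, which is exactly the hypothesis (via the three-term resolution supplied by \zcref{lem:monadC} in the constructions feeding into this lemma, or simply as a property one is free to assume here since $\calF$ of pure dimension always has $\calF^{**}$ reflexive). Once this codimension statement is in hand, both conclusions follow formally.
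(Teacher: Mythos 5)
Your treatment of the second claim is fine: the Auslander--Buchsbaum argument (reflexive of codimension $2$ on the smooth fourfold $Y$ satisfies $S_{2,2}$, hence has projective dimension $\le 2$ at every point, hence the second syzygy of any global two-step locally free resolution is itself locally free) is a correct unpacking of what the paper simply quotes from \cite[Proposition 1.1.10]{HL_moduli}. The problem is in your argument for the first claim. You reduce ``$\calG$ is zero-dimensional'' to the assertion that $\calF$ is locally free on its support outside a subset of dimension $\le 0$, and you then concede that this needs $\calF$ to be reflexive --- which is not a hypothesis of the lemma, and which would make the lemma vacuous ($\calG=0$). Your two attempted escapes do not work: \zcref{lem:monadC} produces sheaves on the Higgs-to-pairs side of the correspondence, whereas \zcref{lem:FlemmA} is applied to an \emph{arbitrary} purely two-dimensional $\calF$ in the opposite direction (it feeds into \zcref{lem:dirimC} to show $\rho_*\calF$ is torsion-free); and reflexivity of $\calF^{**}$ says nothing about $\calF$. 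Moreover the intermediate statement you are trying to prove is simply false in the needed generality: for $\calF$ supported on a reducible or non-reduced surface (e.g.\ $\calO_{Z_1}\oplus\calO_{Z_2}$ with $Z_1,Z_2$ meeting along a curve), the non-locally-free locus of $\calF$ on its support has codimension one, even though $\theta$ is still an isomorphism there. So local freeness on the support is the wrong invariant to control $\calG$.

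The correct route --- and the one the paper takes --- bypasses the support entirely and works with the Ext sheaves on $Y$: purity of $\calF$ gives, by \cite[Propositions 1.1.6 and 1.1.10]{HL_moduli}, that ${\mathcal Ext}^q(\calF,\calO_Y)=0$ for $q\neq 2,3$ and that ${\mathcal Ext}^3(\calF,\calO_Y)$ is zero-dimensional; comparing Chern characters of $\calF$, $\calF^\vee$, $\calF^*$ and $\calF^{**}$ then forces $\ch_k(\calG)=0$ for $k\le 3$, i.e.\ $\calG$ is zero-dimensional. (Equivalently, one may quote directly from \cite[Proposition 1.1.10]{HL_moduli} that for a pure sheaf of codimension $c$ the cokernel of $\theta$ has codimension $\ge c+2$; that is the clean form of the fact you were reaching for, but it is proved via the ${\mathcal Ext}$ bounds, not via the locally free locus.) As written, your proof of the first claim has a genuine gap at exactly the step you flagged as delicate.
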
 
 
\begin{proof} 
Let $\calF^\vee \coloneq R{\mathcal RHom}_Y(\calF, \calO_Y)$ be the derived
dual of $\calF$.  Then one has
\[
\ch_k(\calF^\vee) = (-1)^k \ch_k(\calF)
\]
for all $0\leq k \leq 4$.  By \cite[Propositions 1.1.6 and 1.1.10]{HL_moduli}, one
has ${\mathcal Ext}^k(\calF, \calO_Y)=0$ for all $k \neq 2,3$. Moreover,
${\mathcal Ext}^3(\calF, \calO_Y)$ is zero-dimensional.  This implies that
\[
\ch_k(\calF^*) = (-1)^k \ch_k(\calF)
\]
for $0\leq k \leq 3$. By \cite[Proposition 1.1.10]{HL_moduli}, the
$\calO_Y$-module $\calF^*$ is reflexive, and ${\mathcal Ext}^k_Y(\calF^*,
\calO_Y) =0$ for all $k \neq 2$. Repeating the above argument, this
further implies that
\[
\ch_k(\calF^{**}) = \ch_k(\calF)
\]
for $0\leq k \leq 3$. Then $\calG$ is indeed zero-dimensional. 

The second part of \zcref{lem:FlemmA} is proven in \cite[Proposition
1.1.10]{HL_moduli}
\end{proof}

The next goal is to show that $\calF^{**}$ is flat over $S$. First, we have:

\begin{lemma}\label{lem:dirimA}
Let 
\[
0\to {\sf F}_{-2} \to  {\sf F}_{-1} \to {\sf F}_0 \to \calF^{**} \to 0 
\]
be a locally free resolution of $\calF^{**}$. Let $I \subset {\sf F}_0$
be the image of the morphism ${\sf F}_{-1} \to {\sf F}_0$.  Let $p\in S$
be an arbitrary point and let $Y_p$ be the scheme theoretic fiber of
$\rho$ at $p$.  Then $I$ is flat over $S$ and $I \otimes \calO_{Y_p}$
is a torsion free $\calO_{Y_p}$-module.
\end{lemma}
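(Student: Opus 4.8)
The plan is to establish flatness fiberwise, using the local criterion of flatness together with the two-term presentation $0 \to {\sf F}_{-2} \to {\sf F}_{-1} \to I \to 0$ of $I$ coming from the given locally free resolution of $\calF^{**}$. First I would fix a point $p \in S$ and reduce, by the local criterion, to showing that $\mathrm{Tor}_1^{\calO_S}(I, \kappa(p)) = 0$, equivalently that tensoring the exact sequence $0 \to {\sf F}_{-2} \to {\sf F}_{-1} \to I \to 0$ with $\calO_{Y_p}$ keeps the first map injective. Since ${\sf F}_{-2}$ and ${\sf F}_{-1}$ are locally free on $Y$, they are flat over $S$, so it suffices to prove $\mathrm{Tor}_1^{\calO_S}(I, \kappa(p)) = 0$; by the long exact sequence for $-\otimes^{\LL}_S \kappa(p)$ applied to the short exact sequence, this group is identified with $\ker\big({\sf F}_{-2} \otimes \calO_{Y_p} \to {\sf F}_{-1} \otimes \calO_{Y_p}\big)$, up to a contribution of $\mathrm{Tor}_1^{\calO_S}({\sf F}_0, \kappa(p))$ which vanishes by flatness of ${\sf F}_0$. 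So the whole problem reduces to showing that the restricted complex
\[
0 \to {\sf F}_{-2}\otimes\calO_{Y_p} \to {\sf F}_{-1}\otimes\calO_{Y_p} \to {\sf F}_0\otimes\calO_{Y_p}
\]
is still exact at ${\sf F}_{-2}\otimes\calO_{Y_p}$ and at ${\sf F}_{-1}\otimes\calO_{Y_p}$ in the sense that its degree $-2$ and degree $-1$ cohomology vanish; equivalently, that $\mathcal{T}or_1^{\calO_S}(\calF^{**}, \kappa(p)) = \mathcal{T}or_2^{\calO_S}(\calF^{**}, \kappa(p)) = 0$.

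Next I would prove exactly this Tor-vanishing for $\calF^{**}$. The key input is \zcref{lem:supplemm} and \zcref{cor:dirimA}: the support $Z$ of $\calF$ (hence of $\calF^{**}$, since $\calG$ in \zcref{eq:FseqA} is zero-dimensional and supported on $Z$) projects to $S$ with zero-dimensional fibers, so $\calF^{**}$ is, locally over $S$, a finitely generated module over a ring that is finite over $\calO_S$; in particular $\rho$ restricted to $\mathrm{Supp}(\calF^{**})$ is a finite morphism. I would then argue that $\calF^{**}$, having a three-term locally free resolution on the smooth fourfold $Y$ (by \zcref{lem:FlemmA}), has homological dimension $\le 1$ as a module over $\calO_S$ at each point: indeed, locally on $S$ we may write $Y$ near a fiber as $\mathrm{Spec}$ of a polynomial-type extension, and the finiteness of the support over $S$ forces the relative dimension to be absorbed; the upshot is that $\calF^{**}$ is an $\calO_S$-module of projective dimension $\le 1$, so $\mathcal{T}or_k^{\calO_S}(\calF^{**}, -) = 0$ for $k \ge 2$ automatically. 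The remaining point, vanishing of $\mathcal{T}or_1^{\calO_S}(\calF^{**}, \kappa(p))$, is exactly the statement that $\calF^{**}$ is flat over $S$, which I would deduce from the ``miracle flatness'' criterion: $\calF^{**}$ has a resolution of length one over $\calO_S$, and its fibers $\calF^{**}\otimes\calO_{Y_p}$ all have the same length/Hilbert polynomial (the rank invariant $r$ and the relevant Chern data are locally constant by \zcref{lem:FlemmA} and constancy of $\ch(\calF^{**}) = \ch(\calF)$), forcing flatness over the regular base $S$.

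Finally, for the torsion-freeness of $I \otimes \calO_{Y_p}$: once $I$ is flat over $S$, the sequence $0 \to {\sf F}_{-2}\otimes\calO_{Y_p} \to {\sf F}_{-1}\otimes\calO_{Y_p} \to I\otimes\calO_{Y_p} \to 0$ is exact, exhibiting $I\otimes\calO_{Y_p}$ as the cokernel of a map of locally free $\calO_{Y_p}$-modules; but more usefully it exhibits $I\otimes\calO_{Y_p}$ as a \emph{first syzygy} inside ${\sf F}_0\otimes\calO_{Y_p}$, since it is the image of ${\sf F}_{-1}\otimes\calO_{Y_p} \to {\sf F}_0\otimes\calO_{Y_p}$ by flatness of $\calF^{**}$ (the map $I\otimes\calO_{Y_p}\to {\sf F}_0\otimes\calO_{Y_p}$ is injective because $\mathcal{T}or_1^{\calO_S}(\calF^{**},\kappa(p))=0$). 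As a subsheaf of a locally free sheaf on the (Cohen--Macaulay, in fact smooth) fiber $Y_p$, it is torsion free. \textbf{The main obstacle} I anticipate is the homological-dimension bookkeeping in the second paragraph: one must carefully justify that the three-term resolution on $Y$ together with the finiteness of the support over $S$ really does cut the projective dimension over $\calO_S$ down to $\le 1$ uniformly, and that the fiber dimensions are genuinely constant in $p$ — this is where the hypotheses that $\calF$ is purely two-dimensional with support disjoint from $D_\infty$ (so that $\rho|_{\mathrm{Supp}}$ is finite, not merely quasi-finite) do the real work, and it is worth isolating as the crux of the argument rather than the routine Tor-chasing that surrounds it.
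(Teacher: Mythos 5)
Your architecture inverts the paper's: you propose to prove flatness of $\calF^{**}$ over $S$ \emph{first} and then deduce both conclusions of the lemma, whereas the paper proves this lemma without knowing $\calF^{**}$ is flat --- that is the content of the subsequent \zcref{lem:dirimB}, which is deduced \emph{from} this one. The inversion is legitimate in principle (if $\calF^{**}$ is flat then ${\mathcal Tor}_k^{\calO_S}(\calF^{**},\kappa(p))=0$ for all $k\geq 1$, and your Tor-chase and syzygy arguments close), but your proof of the pivotal claim has a genuine gap. The ``miracle flatness'' step is not valid as stated: constancy of the global Chern character $\ch(\calF^{**})=\ch(\calF)$ does not control the pointwise fiber lengths $\dim_{\kappa(p)}H^0(\calF^{**}\otimes\calO_{Y_p})$. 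For a sheaf finite over a reduced base, constancy of the fiber length is \emph{equivalent} to flatness (the length is merely upper semicontinuous in general), so invoking it here is circular; the Chern character only fixes the generic fiber length. The preceding claim that the three-term resolution on $Y$ plus finiteness of the support over $S$ forces projective dimension $\leq 1$ over $\calO_S$ is likewise asserted rather than proved. The clean way to salvage your route is Auslander--Buchsbaum: by \zcref{lem:FlemmA} the sheaf $\calF^{**}$ has projective dimension $\leq 2$ on the smooth fourfold $Y$, hence depth $\geq 2$ at every point of its support; since the support is finite over $S$ (\zcref{lem:supplemm} together with properness of $Y\to S$), depth is preserved under the finite pushforward, so $\rho_*\calF^{**}$ has depth $2=\dim\calO_{S,p}$ everywhere and is therefore locally free, whence $\calF^{**}$ is flat over $S$. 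With that substitution your argument works and in fact subsumes \zcref{lem:dirimB}.

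For contrast, the paper's proof is shorter and sidesteps $\calF^{**}$ entirely: from $0\to{\sf F}_{-2}\to{\sf F}_{-1}\to I\to 0$ one gets an injection ${\mathcal Tor}_1^Y(I,\calO_{Y_p})\hookrightarrow {\sf F}_{-2}\otimes\calO_{Y_p}$; since $I$ is locally free off $Z_{\sf red}$, this Tor sheaf is supported on the zero-dimensional set $Z_{\sf red}\cap Y_p$, and a zero-dimensional subsheaf of a locally free sheaf on the smooth surface $Y_p$ must vanish. Torsion-freeness of $I\otimes\calO_{Y_p}$ is then extracted from the vanishing of ${\mathcal Ext}^2_{Y_p}(I\otimes\calO_{Y_p},\calO_{Y_p})$ rather than from your (admittedly cleaner) syzygy embedding, which requires the injectivity of $I\otimes\calO_{Y_p}\to{\sf F}_0\otimes\calO_{Y_p}$ and hence, again, the flatness of $\calF^{**}$ that is not yet available at this stage of the paper.
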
 

\begin{proof} 
By construction, $I$ has a locally free resolution 
\[
0\to {\sf F}_{-2} \to {\sf F}_{-1} \to I \to 0 
\]
which yields an injective morphism 
\[
{\mathcal Tor}_1^Y(I, \calO_{Y_p}) \to {\sf F}_{-2} \otimes \calO_{Y_p}.
\]
By \zcref{lem:supplemm}, the scheme-theoretic support of ${\calF}^{**}$
is a closed subscheme $Z \subset Y$ so that the induced projection $Z\to
S$ has zero-dimensional fibers.  Moreover, the exact sequence
\[
0\to I \to {\sf F}_0\to {\calF}^{**}\to 0 
\]
shows that $I$ is locally free on the complement $Y\setminus {Z}_{\sf
red}$. Hence ${\mathcal Tor}_1^Y(I, \calO_{Y_p})$ is set theoretically
supported on $Z_{\sf red} \cap Y_p$, which is zero dimensional by
\zcref{lem:supplemm}.  Since ${\sf F}_{-2} \otimes \calO_{Y_p}$ is
locally free on $Y_p$, and $Y_p$ is smooth, this implies that
\begin{align}\label{eq:torvanishing}
{\mathcal Tor}_1^Y(I, \calO_{Y_p}) =0.
\end{align} 

By \cite[Lemma 3.1.1(b)]{TB_thesis}, the above vanishing result implies
that $I$ is flat over $S$.  Hence one has an exact sequence
\[
0 \to {\sf F}_{-2} \otimes \calO_{Y_p} \to {\sf F}_{-1} \otimes \calO_{Y_p} \to 
 I \otimes \calO_{Y_p}\to 0.
\]
Since the first two terms are locally free, the associated long exact
sequence shows that
\be\label{eq:extvanishingB} 
 {\mathcal Ext}^2_{Y_p}( I \otimes \calO_{Y_p}, \calO_{Y_p}) =0. 
\ee
Moreover, $I \otimes \calO_{Y_p}$ is a  locally free $\calO_{Y_p}$-module
on the complement of the set theoretic intersection $Z_{\sf red}
\cap Y_p$, which is zero dimensional. Then the maximal torsion
$\calO_{Y_p}$-submodule $\calQ\subset I \otimes \calO_{Y_p}$ is
zero-dimensional. Therefore we have an exact sequence
\[
 0\to \calQ \to  I \otimes \calO_{Y_p}\to \calG \to 0
\]
where $\calG$ is a torsion-free $\calO_{Y_p}$-module.  Since $Y_p$
is a smooth surface, we have
\[
 {\mathcal Ext}^2_{Y_p}(\calG, \calO_{Y_p}) =0.
\]
Then the associated long exact sequence yields an isomorphism 
\[
{\mathcal Ext}^2_{Y_p}(I \otimes \calO_{Y_p}, \calO_{Y_p}) \to {\mathcal Ext}^2_{Y_p}(\calQ, 
\calO_{Y_p}).
\]
Given the vanishing result \eqref{eq:extvanishingB}, this implies that
$\calQ=0$. In conclusion, $I\otimes \calO_{Y_p}$ is a torsion-free
$\calO_{Y_p}$-module.
\end{proof}

Using the same notation as in \zcref{lem:dirimA}, we next prove: 
\begin{lemma}\label{lem:dirimB} 
$\calF^{**}$ is flat over $S$. 
\end{lemma}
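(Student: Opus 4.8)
The goal is to upgrade the fiberwise torsion-freeness of $\calF^{**}\otimes\calO_{Y_p}$ established implicitly in \zcref{lem:dirimA} — together with the flatness of the submodule $I$ — into flatness of $\calF^{**}$ itself over $S$. The natural approach is to run the same Tor-vanishing criterion used for $I$ in \zcref{lem:dirimA}, namely \cite[Lemma 3.1.1(b)]{TB_thesis}: it suffices to show that ${\mathcal Tor}_1^Y(\calF^{**}, \calO_{Y_p})=0$ for every point $p\in S$, where $Y_p=\rho^{-1}(p)$ is the scheme-theoretic fiber.

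\textbf{Key steps.} First, from the short exact sequence $0\to I\to {\sf F}_0\to \calF^{**}\to 0$ with ${\sf F}_0$ locally free, the long exact sequence of ${\mathcal Tor}^Y(-,\calO_{Y_p})$ gives an isomorphism ${\mathcal Tor}_1^Y(\calF^{**},\calO_{Y_p})\cong \ker\bigl(I\otimes\calO_{Y_p}\to {\sf F}_0\otimes\calO_{Y_p}\bigr)$, using ${\mathcal Tor}_1^Y({\sf F}_0,\calO_{Y_p})=0$. Second, I would observe that this kernel is exactly the maximal torsion subsheaf of $I\otimes\calO_{Y_p}$: indeed, away from the zero-dimensional locus $Z_{\sf red}\cap Y_p$ the sheaf $I$ is locally free (as noted in the proof of \zcref{lem:dirimA}) and the map $I\otimes\calO_{Y_p}\hookrightarrow {\sf F}_0\otimes\calO_{Y_p}$ is injective there, so the kernel is supported in dimension zero; conversely any torsion element of $I\otimes\calO_{Y_p}$ must die in the locally free sheaf ${\sf F}_0\otimes\calO_{Y_p}$. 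But \zcref{lem:dirimA} established precisely that $I\otimes\calO_{Y_p}$ is torsion-free over $\calO_{Y_p}$, so this kernel vanishes; hence ${\mathcal Tor}_1^Y(\calF^{**},\calO_{Y_p})=0$. Third, apply \cite[Lemma 3.1.1(b)]{TB_thesis} to conclude that $\calF^{**}$ is flat over $S$.

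\textbf{Main obstacle.} The technical heart is the second step — identifying ${\mathcal Tor}_1^Y(\calF^{**},\calO_{Y_p})$ with the torsion of $I\otimes\calO_{Y_p}$ rather than something larger. This rests on the injectivity of $I\otimes\calO_{Y_p}\to {\sf F}_0\otimes\calO_{Y_p}$ on the open dense complement of $Z_{\sf red}\cap Y_p$, which in turn needs that $Z\to S$ has zero-dimensional fibers (\zcref{lem:supplemm}) so that $I$ restricted to that complement is locally free and the generic fiber of the inclusion stays injective. I would take care that the comparison is at the level of the fiber $Y_p$, not just generically on $Y$, so one really uses that $Y_p$ is a smooth surface and the torsion support is $0$-dimensional. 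Once that is in place the argument is a short diagram chase plus the cited flatness criterion; no further input beyond \zcref{lem:supplemm,lem:dirimA} is needed.
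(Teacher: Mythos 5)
Your proof is correct and follows essentially the same route as the paper: both reduce to the Tor-vanishing criterion of \cite[Lemma 3.1.1(b)]{TB_thesis}, identify ${\mathcal Tor}_1^Y(\calF^{**},\calO_{Y_p})$ with a submodule of $I\otimes\calO_{Y_p}$ via the sequence $0\to I\to{\sf F}_0\to\calF^{**}\to 0$, note that it is supported on the zero-dimensional set $Z_{\sf red}\cap Y_p$, and kill it using the torsion-freeness of $I\otimes\calO_{Y_p}$ from \zcref{lem:dirimA}. The only cosmetic difference is that you phrase the kernel as the full torsion submodule, whereas the paper only needs the one containment.
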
 

\begin{proof} 
By \cite[Lemma 3.1.1(b)]{TB_thesis}, it suffices to prove that 
\[
{\mathcal Tor}_1^Y(\calF^{**}, \calO_{Y_p}) =0 
\]
for any $p\in S$.  The exact sequence 
\[
0 \to I \to {\sf F}_0 \to \calF'\to 0
\]
yields an injective morphism 
\[
 {\mathcal Tor}_1^Y(\calF^{**}, \calO_{Y_p}) \to I \otimes \calO_{Y_p}~. 
\]
Since $\calF^{**}$ is scheme theoretically supported on $Z$, the set
theoretic support of $ {\mathcal Tor}_1^Y(\calF^{**}, \calO_{Y_p})$
is contained in the set theoretic intersection $Z_{\sf red}\cap
Y_p$, which is zero dimensional by \zcref{lem:supplemm}.  Moreover,
$I \otimes \calO_{Y_p}$ is a torsion free $\calO_{Y_p}$-module by
\zcref{lem:dirimA}. This implies the required vanishing.
\end{proof} 

In conclusion, by \zcref{lem:FlemmA, lem:dirimB}, one obtains an exact
sequence
\be\label{eq:FseqAB} 
0 \to \calF \xrightarrow{\theta} \calF^{**}\to \calG \to 0 
\ee
where the middle term is reflexive and flat over $S$, and $\calG$ is
zero-dimensional.  Moreover, the first two terms are scheme theoretically
supported on a closed subscheme $Z\subset Y$, so that the induced
projection $Z\to S$ has zero-dimensional fibers.
\begin{lemma}\label{lem:dirimC}
One has an exact sequence of $\calO_S$-modules
\be\label{eq:dirimA} 
0 \to \rho_*\calF \to \rho_* \calF^{**} \to \rho_*\calG \to 0 
\ee
where the middle term is locally free and the third is zero-dimensional.
In particular, the direct image $\rho_*\calF$ is torsion-free.
\end{lemma}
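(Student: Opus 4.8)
The plan is to apply the direct image functor $\rho_*$ to the short exact sequence \eqref{eq:FseqAB} and then read off the three assertions one at a time. The first step is to check that all three sheaves $\calF$, $\calF^{**}$, $\calG$ lie in ${\rm Coh}_{\sf c}(X)$. This is clear for $\calF$, and once we know $\mathrm{Supp}(\calG)\subseteq\mathrm{Supp}(\calF^{**})$ it is clear for the zero-dimensional $\calG$ as well. For $\calF^{**}$ it follows from the Chern class identities $\ch_k(\calF^{**})=\ch_k(\calF)$, $0\le k\le 3$, of \zcref{lem:FlemmA} together with the purity of $\calF^{**}$: since $\theta$ is an isomorphism away from the zero-dimensional $\mathrm{Supp}(\calG)$, and a subsheaf of $\calF^{**}$ supported in dimension $\le 1$ must vanish, one gets $\mathrm{Supp}(\calF^{**})=\mathrm{Supp}(\calF)\subseteq X$. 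This is precisely what is recorded just before the lemma, together with the fact that the common scheme-theoretic support projects to $S$ with zero-dimensional fibers. Hence \zcref{cor:dirimA} applies to \eqref{eq:FseqAB} and directly produces the short exact sequence \eqref{eq:dirimA}, the surjectivity on the right being available because $R^1\rho_*\calF=0$.

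The second step is to show the middle term $\rho_*\calF^{**}$ is locally free. By \zcref{lem:dirimB}, $\calF^{**}$ is flat over $S$; the morphism $\rho$ is proper; and for each $p\in S$ the fiberwise restriction $\calF^{**}\otimes\calO_{Y_p}$ is a coherent sheaf on $Y_p$ whose support meets $Y_p$ in the zero-dimensional set of \zcref{lem:supplemm}, so that $H^i(Y_p,\calF^{**}\otimes\calO_{Y_p})=0$ for all $i>0$. By cohomology and base change \cite{FOAG}, $\rho_*\calF^{**}$ is then locally free, with formation commuting with arbitrary base change; alternatively one may push the three-term locally free resolution of \zcref{lem:FlemmA} through $\rho_*$. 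The remaining two claims are immediate: $\rho_*\calG$ is zero-dimensional because its support is the finite set $\rho(\mathrm{Supp}(\calG))$, and $\rho_*\calF$ is torsion-free because \eqref{eq:dirimA} exhibits it as a subsheaf of the locally free sheaf $\rho_*\calF^{**}$.

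The only step carrying genuine content is the cohomology-and-base-change argument for local freeness of $\rho_*\calF^{**}$; everything else is formal once the purity and support properties from the earlier lemmas are in hand. The one point that is not completely automatic, and that I would spell out carefully, is the verification that $\calF^{**}$ (and hence $\calG$) is a horizontal sheaf, so that \zcref{cor:dirimA} is legitimately applicable — this is exactly where purity of $\calF^{**}$ and the Chern class identities of \zcref{lem:FlemmA} are used.
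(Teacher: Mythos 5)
Your proof is correct and follows essentially the same route as the paper: apply \zcref{cor:dirimA} to the sequence \eqref{eq:FseqAB}, then use flatness from \zcref{lem:dirimB} together with the vanishing $H^k(\calF^{**}\otimes\calO_{Y_p})=0$ for $k\geq 1$ and cohomology-and-base-change to get local freeness of $\rho_*\calF^{**}$, with the remaining claims formal. The extra care you take in verifying that $\calF^{**}$ and $\calG$ are horizontal sheaves is a reasonable elaboration of what the paper records implicitly just before the lemma.
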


\begin{proof} 
Under the stated conditions, the exact sequence \eqref{eq:dirimA}
follows from \zcref{cor:dirimA}.  Moreover, one has
\[
H^k(\calF^{**}\otimes \calO_{Y_p}) =0 
\]
for all $k \geq 1$, for any point $p\in S$. Since $\calF^{**}$ is flat
over $S$, the base change theorem implies that $\rho_*\calF^{**}$ is
locally free. Finally, $\rho_*\calG$ is zero dimensional since $\calG$
is zero-dimensional.
\end{proof} 
In conclusion, \zcref{lem:monadC, lem:monadD, lem:dirimC} yield:
\begin{proposition}\label{prop:spectral} 
The assignment $\calF \mapsto h(\calF)$ determines a categorical
equivalence ${\rm Coh}_{\sf c}^{(2)}(X) \to {\rm Higgs}_{\sf t.f.}(S;
\LL_1, \LL_2)$ which preserves exact sequences. Its inverse maps a Higgs
sheaf $\calH$ to the degree zero cohomology sheaf of the associated monad
complex ${\bf M}(\calH)$. Moreover, $\calH$ is locally free if and only
if $\calF$ is reflexive.
\end{proposition}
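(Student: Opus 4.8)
The plan is to assemble \zcref{prop:spectral} from the lemmas already in place, treating it as a bookkeeping exercise that packages the monad construction (in one direction) and the functor $h$ of \eqref{eq:cohtohiggs} (in the other) into mutually inverse, exact-sequence-preserving equivalences. First I would fix the two functors: in one direction, $h: {\rm Coh}^{(2)}_{\sf c}(X) \to {\rm Higgs}_{\sf t.f.}(S; \LL_1, \LL_2)$, where I must first check that $h$ actually lands in the torsion-free subcategory — this follows from \zcref{lem:dirimC}, since $h(\calF) = \pi_*\calF$ as an $\calO_S$-module (with its two Higgs fields coming from multiplication by $z_1, z_2$), and that pushforward is torsion-free. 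In the other direction, $\calH \mapsto \calF(\calH)$, the degree-zero cohomology of the monad ${\bf M}(\calH)$ of \eqref{eq:monadA}, which by \zcref{lem:monadC} is a well-defined purely two-dimensional object of ${\rm Coh}_{\sf c}(X)$.

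Next I would verify that these functors are mutually inverse. One composite is handled by part $(ii)$ of \zcref{lem:monadA} in the locally free case, namely the natural isomorphism $\calH \xrightarrow{\sim} h(\calF(\calH))$; for a general torsion-free $\calH$ one takes a locally free resolution ${\calH}_{-1} \to {\calH}_0 \to \calH$ as in the proof of \zcref{lem:monadC}, applies the monad functor to get the exact sequence $0 \to \calF_{-1} \to \calF_0 \to \calF \to 0$ on $Y$, pushes forward, and uses naturality of the isomorphism in \zcref{lem:monadA}$(ii)$ together with \zcref{cor:dirimA} to conclude $\calH \simeq h(\calF(\calH))$ in general — this is exactly the content of the last sentence of \zcref{lem:monadD}. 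For the other composite, $\calF \to \calF(h(\calF))$, I would use the exact sequence \eqref{eq:FseqAB}, $0 \to \calF \to \calF^{**} \to \calG \to 0$: since $\calF^{**}$ is reflexive with a three-term locally free resolution (\zcref{lem:FlemmA}) and flat over $S$ (\zcref{lem:dirimB}), it is the monad-cohomology sheaf of the locally free Higgs sheaf $h(\calF^{**})$ by \zcref{lem:monadA}$(i)$, one reduces to checking that $h$ is fully faithful on two-dimensional sheaves so that $h(\calF)$ determines $\calF$, and here I would invoke the tilting equivalence $\tau$ of \zcref{thm:tilting} together with \zcref{prop:higgscat}: $\tau$ restricted to ${\rm Coh}^{(2)}_{\sf c}(X)$ factors through the subcategory $\calC$ (because $a_0, b_0$ are isomorphisms, as noted just before \eqref{eq:cohtohiggs}), and $\calC \simeq {\rm Higgs}(S; \LL_1, \LL_2)$, so $h$ is the restriction of an equivalence — hence fully faithful, and its essential image is precisely the torsion-free Higgs sheaves by \zcref{lem:monadC} and \zcref{lem:dirimC}.

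Exactness-preservation in both directions is then immediate: $h$ preserves short exact sequences by \zcref{cor:dirimA} (since $\pi_*$ is exact on ${\rm Coh}_{\sf c}(X)$ and the Higgs fields are natural), and the monad functor preserves them by \zcref{lem:monadD}. Finally, the locally-free-versus-reflexive statement: if $\calH$ is locally free then $\calF(\calH)$ is reflexive by \zcref{lem:monadA}$(i)$; conversely, if $\calF$ is reflexive then $\calF = \calF^{**}$, so \eqref{eq:FseqAB} has $\calG = 0$ and $\rho_*\calF = \rho_*\calF^{**}$ is locally free by \zcref{lem:dirimC}, i.e. $h(\calF)$ has locally free underlying sheaf.

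The main obstacle, I expect, is not any single lemma but correctly identifying $h$ with the restriction of the tilting equivalence $\tau$ and thereby getting full faithfulness "for free" rather than by hand: one must check that the $\calB^{\sf op}$-module $\tau(\calF)$ really does have $\calE_0 = \calE_1 = \calE_2$ up to the canonical isomorphisms $a_0, b_0$ and that these match the Higgs data produced by $h$, so that the square
\[
\begin{array}{ccc}
{\rm Coh}^{(2)}_{\sf c}(X) & \xrightarrow{\ \tau\ } & \calC \\
\big\downarrow{\scriptstyle h} & & \big\downarrow{\scriptstyle \wr} \\
{\rm Higgs}_{\sf t.f.}(S;\LL_1,\LL_2) & \hookrightarrow & {\rm Higgs}(S;\LL_1,\LL_2)
\end{array}
\]
commutes up to natural isomorphism; granting that, the remaining work is routine diagram-chasing with the resolutions already constructed.
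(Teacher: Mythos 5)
Your proposal is correct and takes essentially the same route as the paper, which simply assembles \zcref{lem:monadC}, \zcref{lem:monadD} and \zcref{lem:dirimC} (together with the preceding construction of $h$ via the tilting functor and \zcref{prop:higgscat}, which already makes $h$ the restriction of the equivalence $\tau$, so the full-faithfulness you flag as the "main obstacle" is built into the definition). Your filling-in of the mutual-inverse and reflexive-versus-locally-free checks matches the intended argument.
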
 

\begin{remark}
Using analogous arguments, one can prove that the functor
\eqref{eq:cohtohiggs} is an equivalence of abelian categories.
The details will be omitted because this result will not be used in
the following. 
\end{remark}

\subsection{Stable pair moduli spaces}\label{sect:pairs_moduli} 

This section summarizes some of the foundational results proven in
\cite{St_pairs_fourfolds, Counting_surfaces_I, Counting_surfaces_II} on
moduli spaces of $\PT_1$-stable pairs.  In order to make a connection with
moduli spaces of objects in the derived category $D^b(Y)$, we will work
with some closely related objects called ${\rm PT}_1$-stable triples. A
${\rm PT}_1$-stable triple on $Y$ will be defined by the data $(\calL,
\calF, s)$ where
\begin{itemize} 
\item $\calL$ is a line bundle on $Y$ with $\ch(\calL) = \ch(\calO_Y)$, 
\item $\calF$ is a purely two-dimensional sheaf on $Y$, and 
\item $s: \calL \to \calF$ is morphism of sheaves on $Y$ so that
$\cok(s)$ has one-dimensional support.
\end{itemize} 
We define the support of a triple $(\calL, \calF, s)$ to be the set
theoretic support of $\calF$.  A triple will be said to be supported in
$X$ if its support is contained in $X$.

Let $ST_{1}(Y)$ be the moduli stack of stable triples on $Y$.
This is identified with a moduli stack of objects in
the derived category $D^b(Y)$ as follows.

Let $\tau$ be the $t$-structure on $D^b(Y)$ defined by the torsion pair 
\[
\left({\rm Coh}_{\leq 2}(Y), \ {\rm Coh}_{\geq 3}(Y)\right),
\]
where ${\rm Coh}_{\leq 2}(Y)\subset {\rm Coh}(Y)$ is the full-subcategory
of coherent sheaves with at most two-dimensional support, and ${\rm
Coh}_{\geq 3}(Y)\subset {\rm Coh}(Y)$ is its orthogonal complement.
Let $\coh(Y,\tau)$ be the moduli stack of objects in the heart of the
above $t$-structure, which is an algebraic stack of locally finite type.
Then by \cite[Theorem 2.19]{St_pairs_fourfolds} and \cite[Theorem
1.4]{Counting_surfaces_I}, the moduli stack of stable triples $ST_1(Y)$
is naturally isomorphic to an open substack of $\coh(Y,\tau)$ determined
by a polynomial stability condition.

Moreover, note that the above torsion pair is open in the sense
of \cite{Mod_K_trivial} i.e.\ the moduli stacks $Coh_{\leq 2}(Y)$
and $Coh_{\geq 3}(Y)$ are open in $Coh(Y)$.  Then, by analogy with
\cite[Section 2.1]{Two_dim_Hall}, the moduli stack ${\bf Perf}_\tau(Y)$
admits a derived enhancement, which is a substack of the derived
moduli stack of perfect complexes ${\bf Perf}(Y)$ constructed in
\cite{Mod_obj_dg}.  Therefore, as an open substack of $\coh(Y,\tau)$,
the moduli stack $ST_1(Y)$ also admits a derived enhancement ${\bf
ST}_1(Y)$. Furthermore, by analogy with \cite[Section 2.w]{Two_dim_Hall},
the derived stack ${\bf ST}_1(Y)$ admits a tangent complex, which
coincides with the restriction of the tangent complex of ${\bf Perf}(Y)$
to ${\bf ST}_1(Y)$.

One also has a determinant map 
\be \label{eq:detmap} 
\det: {\bf ST}_1(Y) \to {\bf Pic}_0(Y),
\ee 
where the target is the derived Picard stack of $Y$ consisting of line
bundles with the same Chern character as $\calO_Y$.

Then the derived moduli stack of $\PT_1$-stable pairs on $Y$
is defined through the pull-back square 
\[
\xymatrix{ 
{\bf SP}_1(Y)\ar[r] \ar[d] & {\bf ST}_1(Y)\ar[d]^-{\det} \\
{\rm Spec}\, \IC\ar[r]& {\bf Pic}_0(Y), \\
}
\]
where the bottom horizontal arrow maps ${\rm Spec}\, \IC$ to $[\calO_Y]$. 

Finally, \cite[Proposition
2.9]{Counting_surfaces_II}, shows that the support condition ${\rm
Supp}(\calF) \subset X$ is open. Therefore one also has an open derived
substack ${\bf ST}_1(X) \subset {\bf ST}_1(Y)$, respectively an open
derived subspace ${\bf SP}_1(X) \subset {\bf SP}_1(Y)$ whose closed points
are in one-to-one correspondence with objects supported in $X$. In the
latter case, the closed points are also in one-to-one correspondence with
stable pairs on $X$. As shown in \cite[Section 4.1]{St_pairs_fourfolds}
and \cite[Theorem 4.1]{Counting_surfaces_I}, the construction of
\cite{OT-isotropic} yields a  virtual fundamental cycle and a virtual
structure sheaf on classical truncation ${\bf SP}_1(X)$.

\subsection{From stable pairs to ADHM sheaves}\label{sect:pairs_ADHM} 
Our next goal is to derive an explicit connection between $\PT_1$-stable
pairs on $X$ and framed ADHM sheaves, using the tilting functor
\eqref{eq:tiltingA}.

First note the following result, which follows easily from the
basic properties of projective bundles. Recall that ${\bf Pic}_0(Y)$
is the derived Picard stack consisting of line bundles $\calL$ with
$\ch_1(\calL)= \ch_1(\calO_Y)$.  Let ${\bf Pic}_0(S)$ be its analogue
for line bundles on $S$. Then we have:
\begin{proposition}\label{prop:linebundles}
The tilting functor \eqref{eq:tiltingA} induces an isomorphism of
derived stacks 
\[
{\bf Pic}_0(Y) \to {\bf Pic}_0(S) 
\]
which maps a line bundle $\calL$ on $Y$ to $\rho_*(\calL)$.
\end{proposition}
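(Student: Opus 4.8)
The plan is to show that the equivalence $\tau$ of \zcref{thm:tilting}, restricted to line bundles with the Chern character of $\calO_Y$, lands in (shifted) line bundles on $S$, and that the functor obtained this way is precisely $\calL \mapsto \rho_*\calL$. First I would compute, for any line bundle $\calL$ on $Y$ with $\ch(\calL) = \ch(\calO_Y)$, the complex $R\rho_* R\mathcal{H}om_Y(\calT, \calL) = R\rho_*(\calL) \oplus R\rho_*(\calL(-1)) \oplus R\rho_*(\calL(-2))$. Since $\ch(\calL) = \ch(\calO_Y)$ and $\rho$ is a $\IP^2$-bundle, the Chern character constraint forces $\calL$ to restrict to $\calO_{\IP^2}$ on every fiber (its fiberwise degree is $0$); hence by cohomology and base change $R\rho_*\calL$ is a line bundle on $S$ concentrated in degree $0$, while $R\rho_*\calL(-1)$ and $R\rho_*\calL(-2)$ vanish identically (the fiberwise cohomology of $\calO_{\IP^2}(-1)$ and $\calO_{\IP^2}(-2)$ is zero). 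Therefore $\tau(\calL)$ is the $\calB^{\sf op}$-module $(\rho_*\calL, 0, 0)$, which is automatically invertible as a module over the relevant matrix algebra and has the same class as $\tau(\calO_Y) = (\calO_S, 0, 0)$.

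Next I would upgrade this pointwise computation to a morphism of derived stacks. Since $\tau$ is an equivalence of derived categories that is compatible with base change along $S$ (it is given by a relative integral-transform-type functor $R\rho_* R\mathcal{H}om_Y(\calT, -)$, with $\calT$ a tilting bundle), it induces a morphism on the derived moduli stacks of objects, restricting to an open-and-closed piece on the components cut out by the fixed Chern character. Concretely, a family of line bundles $\calL$ on $Y \times_S B$ parametrized by a test scheme $B$ is sent to the family $R\rho_{B,*}\calL$ on $S \times B$, which by the above is again a line bundle flat over $B$; functoriality in $B$ gives the desired morphism $\mathbf{Pic}_0(Y) \to \mathbf{Pic}_0(S)$. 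The inverse is constructed symmetrically by pulling back along $\rho$: starting from a line bundle $\calM$ on $S$ one forms $\rho^*\calM$ on $Y$, which has $\ch = \ch(\calO_Y)$, and one checks $\rho_*\rho^*\calM \simeq \calM$ by the projection formula together with $\rho_*\calO_Y \simeq \calO_S$, while $\rho^*\rho_*\calL \simeq \calL$ for $\calL \in \mathbf{Pic}_0(Y)$ since both restrict to $\calO_{\IP^2}$ on fibers and agree after pushforward (equivalently, this is the statement that $\tau$ and its quasi-inverse are mutually inverse on the relevant subcategory). That these natural maps are compatible with the derived/stacky structure again follows from the fact that all constructions are stable under base change and $\tau$ is an equivalence.

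The main obstacle I expect is bookkeeping at the derived level rather than any geometric subtlety: one has to make sure that the identification of $\tau$-images of line bundles with honest line bundles on $S$ (as opposed to perfect complexes with invertible determinant) holds in families and at the level of derived structure sheaves, so that the map genuinely lands in $\mathbf{Pic}_0(S)$ and is an isomorphism of \emph{derived} stacks and not just of classical truncations. This is handled by the vanishing $R^{>0}\rho_*\calL = 0$ and $R\rho_*\calL(-i) = 0$ for $i=1,2$, which are stable under arbitrary base change on $S$ by cohomology and base change (the fibers being $\IP^2$ with the relevant line bundles having no higher cohomology and, in the latter cases, no cohomology at all); this pins down $\tau(\calL)$ as a line bundle placed in degree zero uniformly. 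Everything else is formal from \zcref{thm:tilting} and standard properties of projective bundles.
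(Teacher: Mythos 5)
Your proposal is correct, and it supplies exactly the argument the paper leaves implicit: the paper states \zcref{prop:linebundles} without proof, remarking only that it ``follows easily from the basic properties of projective bundles,'' and those properties are precisely the ones you invoke (fiberwise triviality of $\calL$ forced by $\ch_1(\calL)=\ch_1(\calO_Y)$, the vanishing $R\rho_*\calL(-i)=0$ for $i=1,2$ from $H^\bullet(\IP^2,\calO(-i))=0$, cohomology and base change to make this work in families, and $\rho^*$ as the inverse). The only cosmetic point is that the phrase ``invertible as a module over the matrix algebra'' is not really needed — what matters is just that $\tau(\calL)=(\rho_*\calL,0,0)$ identifies the relevant substack with ${\bf Pic}_0(S)$, which your computation already establishes.
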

As a consequence of \zcref{prop:linebundles}, the determinant map
\eqref{eq:detmap} determines a second map
\be\label{eq:detmapB} 
\mathrm{det}_S: {\bf ST}_1(Y) \to {\bf Pic}_0(S). 
\ee
Moreover, \zcref{prop:spectral} yields: 
\begin{lemma}\label{lem:stpairsX} 
The assignment 
\[ 
(\calL, \calF, s) \mapsto (\rho_*(\calL), h(\calF), \rho_*(s)) 
\]
determines an equivalence between the groupoid of stable triples on $Y$
with support in $X$ and the groupoid of stable ADHM sheaves on $S$.

Similarly, the assignment 
\[
(\calF,s) \mapsto (h(\calF), \pi_*(s)) 
\]
determines an equivalence between the groupoid of stable pairs on $X$
and the groupoid of framed stable ADHM sheaves on $S$.
\end{lemma}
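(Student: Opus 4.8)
The plan is to build the asserted equivalence of groupoids from three ingredients already in place. The first is the spectral equivalence of \zcref{prop:spectral}, which matches a purely two-dimensional sheaf $\calF$ on $Y$ supported in $X$ with a torsion-free Higgs sheaf $(\calE,\Phi_1,\Phi_2)$ on $S$, the underlying sheaf being $h(\calF)=\pi_*\calF$. The second is the isomorphism of \zcref{prop:linebundles}, matching a line bundle $\calL$ on $Y$ with $\ch_1(\calL)=\ch_1(\calO_Y)$ with the line bundle $\rho_*\calL$ on $S$ (automatically with $\ch(\rho_*\calL)=\ch(\calO_S)$), whose inverse is $N\mapsto\rho^*N$. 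The third is a natural identification of the section $s$ with the framing morphism $\phi$: writing $\calL\simeq\rho^*N$ with $N=\rho_*\calL$, I would note that $R^{>0}\rho_*(\calF\otimes\rho^*N^\vee)=0$ by \zcref{cor:dirimA}, since $\calF\otimes\rho^*N^\vee$ lies in ${\rm Coh}_{\sf c}(X)$, and then apply the projection formula to obtain a natural isomorphism
\[
\Hom_Y(\calL,\calF)\;\simeq\;H^0(S,\rho_*\calF\otimes N^\vee)\;\simeq\;\Hom_S(\rho_*\calL,\calE)
\]
under which $s$ corresponds to $\rho_*(s)=\phi$. (Alternatively this follows from \zcref{thm:tilting} once one checks that the tilting functor carries $\rho^*N$ to the $\calB^{\sf op}$-module concentrated in degree zero.)

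Assembling these, I would define the functor $(\calL,\calF,s)\mapsto(\rho_*\calL,h(\calF),\rho_*(s))$ together with the reverse assignment $(\calL,\calE,\Phi_1,\Phi_2,\phi)\mapsto(\rho^*\calL,\calF,s)$, where $\calF$ is the degree-zero cohomology of the monad ${\bf M}(\calE,\Phi_1,\Phi_2)$ and $s$ is the preimage of $\phi$ under the isomorphism above. The ADHM relation for $(\Phi_1,\Phi_2)$ is precisely the relation defining $h(\calF)\in{\rm Higgs}(S;\LL_1,\LL_2)$, and $\ch(\rho_*\calL)=\ch(\calO_S)$ by \zcref{prop:linebundles}. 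That both assignments are functorial and mutually inverse on the underlying groupoids then follows formally from \zcref{prop:spectral,prop:linebundles} and the naturality, in both arguments, of the $\Hom$-identification. The one nontrivial point that remains is to show that a triple is stable if and only if the corresponding ADHM sheaf is generically cyclic.

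This stability matching is the step I expect to be the main obstacle. I would first record that, because the equivalence of \zcref{prop:spectral} preserves exact sequences (and so does its inverse, by \zcref{lem:monadD}), under $\calF\leftrightarrow(\calE,\Phi_1,\Phi_2)$ the saturated $\Phi_i$-invariant subsheaves $\calE'\subsetneq\calE$ correspond bijectively to the subsheaves $\calF'\subsetneq\calF$ with $\calF/\calF'$ again purely two-dimensional (such an $\calF'$ being automatically purely two-dimensional), with $\calE'=\pi_*\calF'$, and that by \zcref{cor:dirimA} applying $\pi_*$ to $0\to\calF'\to\calF\to\calF/\calF'\to0$ yields the corresponding short exact sequence on $S$. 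Granting this, both implications go by contradiction. If $(\calL,\calF,s)$ is a stable triple and $\mathrm{im}(\phi)\subseteq\calE'$ for some proper saturated $\Phi_i$-invariant $\calE'$, I take the corresponding $\calF'\subsetneq\calF$; left exactness of $\Hom_Y(\calL,-)$ and naturality of the $\Hom$-identification force $s$ to factor through $\calF'$, so the nonzero purely two-dimensional sheaf $\calF/\calF'$ becomes a quotient of $\cok(s)$, contradicting that $\cok(s)$ is at most one-dimensional. Conversely, if $\cok(s)$ had two-dimensional support, I let $T\subset\cok(s)$ be its maximal subsheaf of dimension at most one and $\calF'$ the kernel of $\calF\to\cok(s)\to\cok(s)/T$; then $\calF'\subsetneq\calF$ is proper with $\calF/\calF'$ purely two-dimensional and nonzero, $\mathrm{im}(s)\subseteq\calF'$, hence the corresponding $\calE'=\pi_*\calF'$ is a proper saturated $\Phi_i$-invariant subsheaf containing $\mathrm{im}(\phi)$, contradicting generic cyclicity. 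This establishes the first equivalence.

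For the second statement I would simply specialize to $\calL=\calO_Y$: a $\PT_1$-stable pair $(\calF,s)$ on $X$ is the same datum as a stable triple $(\calO_Y,\calF,s)$ on $Y$ with support in $X$, using $\Hom_Y(\calO_Y,\calF)=H^0(Y,\calF)=H^0(X,\calF|_X)$ since $\calF$ is supported in $X$; under the first equivalence it goes to a stable ADHM sheaf with line bundle $\rho_*\calO_Y=\calO_S$, that is, to the framed stable ADHM sheaf $(h(\calF),\pi_*(s))$. The only genuinely delicate points are the bookkeeping in the $\Hom$-identification and the verification that \zcref{prop:spectral} matches saturated invariant subsheaves with purely two-dimensional subquotients; the rest is formal.
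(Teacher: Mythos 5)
Your proposal is correct and follows the same route the paper intends: the paper states this lemma as an immediate consequence of \zcref{prop:spectral} (together with \zcref{prop:linebundles}) without writing out the details, and your argument --- the projection-formula identification $\Hom_Y(\calL,\calF)\simeq\Hom_S(\rho_*\calL,\rho_*\calF)$ for the section, plus the matching of saturated $\Phi_i$-invariant subsheaves with subsheaves $\calF'\subset\calF$ having purely two-dimensional quotient via the exactness-preserving equivalence --- is exactly the missing bookkeeping. The only hair worth noting is the degenerate case $s=0$ (equivalently $\phi=0$), where the literal reading of the paper's cyclicity condition ``$0\subsetneq\calE'\subsetneq\calE$'' can become vacuous; this is an artifact of the definition rather than a gap in your argument.
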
 

Now let ${\bf Perf}(\calB^{\sf op}-{\rm mod})$ be the derived moduli stack
of complexes in $D^b(\calB^{\sf op}-{\rm mod})$. Then \zcref{thm:tilting,
lem:stpairsX} yield:
\begin{theorem}\label{thm:triplesADHM} 
The tilting functor \eqref{eq:tiltingA} induces a morphism of derived
stacks ${\bf ST}_1(Y) \to {\bf Perf}(\calB^{\sf op}-{\rm mod})$
mapping ${\bf ST}_1(Y)$ isomorphically onto a derived substack
${\bf ADHM}(S, \LL_1,\LL_2)\subset {\bf Perf}(\calB^{\sf op}-{\rm mod})$
whose closed points are
in one-to-one correspondence with stable ADHM sheaves on $S$.  Moreover,
the determinant map \eqref{eq:detmapB} factors as follows
\[
\xymatrix{ 
{\bf ST}_1(Y) \ar[dr]_-{{\det}_S}  \ar[rr]^-{\sigma} && {\bf ADHM}(S,\LL_1, \LL_2)\ar[dl]^-{\varphi}\\
& {\bf Pic}_0(S) & \\
}
\]
where 
\begin{itemize} 
\item $\sigma$ is the induced isomorphism, and 
\item 
$\varphi$ is the forgetful morphism mapping an ADHM sheaf 
$(\calL, \calE, \Phi_1, \Phi_2, \phi)$ to $\calL$. 
\end{itemize}
\end{theorem}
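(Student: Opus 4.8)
The plan is to upgrade the derived equivalence $\tau$ of \zcref{thm:tilting}, together with the groupoid-level identifications of \zcref{prop:spectral} and \zcref{lem:stpairsX}, to an isomorphism of \emph{derived} moduli stacks, and then to read off the factorization of $\det_S$ from the explicit form of $\tau$. The only genuinely new ingredient compared with the abelian and triangulated statements already proved is the passage from an equivalence of categories to an isomorphism of derived moduli problems compatible with the tangent complexes; that is where the work lies.

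First I would observe that $\tau = R\rho_* R{\mathcal Hom}_Y(\calT,-)$ is a kernel transform: $\calT=\calO_Y\oplus\calO_Y(1)\oplus\calO_Y(2)$ is a vector bundle on $Y$ which $\rho$-relatively generates $D^b(Y)$, and $\calB=R\rho_*R{\mathcal Hom}_Y(\calT,\calT)$ is a finite $\calO_S$-algebra, so $\tau$ is induced by the perfect $\calB$-$\calO_Y$-bimodule $\calT$ and restricts to a quasi-equivalence of the dg-categories of perfect complexes $\mathrm{Perf}(Y)\simeq\mathrm{Perf}(\calB^{\sf op}-{\rm mod})$. By the functoriality of the derived moduli stack of perfect complexes under quasi-equivalences \cite{Mod_obj_dg}, this produces an isomorphism of derived stacks ${\bf Perf}(Y)\xrightarrow{\sim}{\bf Perf}(\calB^{\sf op}-{\rm mod})$, compatible with cotangent complexes: at a point $E$ both tangent complexes are $R{\rm Hom}(E,E)[1]$ computed in the respective category, and $\tau$ matches these. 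Restricting this isomorphism along the open immersion ${\bf ST}_1(Y)\hookrightarrow{\bf Perf}(Y)$ gives the asserted morphism ${\bf ST}_1(Y)\to{\bf Perf}(\calB^{\sf op}-{\rm mod})$.

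Next I would transport the substack. The torsion pair $({\rm Coh}_{\leq 2}(Y),{\rm Coh}_{\geq 3}(Y))$ is open \cite{Mod_K_trivial}, so ${\bf Perf}_\tau(Y)$ is an open derived substack of ${\bf Perf}(Y)$, and ${\bf ST}_1(Y)$ is the open substack of it carved out by the polynomial stability of \cite{St_pairs_fourfolds,Counting_surfaces_I}; hence its image under the isomorphism of the previous paragraph is an open derived substack of ${\bf Perf}(\calB^{\sf op}-{\rm mod})$, which I take as the definition of ${\bf ADHM}(S,\LL_1,\LL_2)$ — it then automatically carries a cotangent complex restricted from ${\bf Perf}(\calB^{\sf op}-{\rm mod})$. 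It remains to identify the closed points: for a stable triple $(\calL,\calF,s)$ supported in $X$ (an open condition by \cite[Proposition 2.9]{Counting_surfaces_II}), applying $\tau$ termwise to the two-term complex $[\calL\xrightarrow{s}\calF]$ and invoking \zcref{prop:linebundles} and \zcref{prop:spectral} produces $[\rho_*\calL\to h(\calF)]$, which is exactly the ADHM datum $(\rho_*\calL,h(\calF),\rho_*(s))$ of \zcref{lem:stpairsX}; $\PT_1$-stability, i.e.\ one-dimensionality of $\cok(s)$, corresponds to generic cyclicity, and the monad functor ${\bf M}(-)$ of \zcref{prop:spectral} together with \zcref{prop:linebundles} supplies the inverse on closed points. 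Combining this with the previous paragraph yields the isomorphism $\sigma$.

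Finally, for the factorization of $\det_S$: by construction $\det_S$ is $\det$ followed by the isomorphism ${\bf Pic}_0(Y)\xrightarrow{\sim}{\bf Pic}_0(S)$, $\calL\mapsto\rho_*\calL$, of \zcref{prop:linebundles}, whereas $\varphi\circ\sigma$ records the line-bundle component $\rho_*\calL$ of the associated ADHM sheaf. These agree because, for a stable triple, the determinant of the two-term complex $[\calL\to\calF]$ equals $\calL$ — indeed $\det\calF=\calO_Y$, since $\calF$ is purely two-dimensional and $\cok(s)$ is at most one-dimensional — and in families $\tau$ intertwines the determinant of the universal perfect complex with that of its image, as one checks from the formula for $\tau$ using $\tau(\rho^*M)=M$ for $M\in{\bf Pic}_0(S)$. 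I expect everything downstream of the first two paragraphs to be essentially bookkeeping; the main obstacle is the construction in the second paragraph — turning the abstract equivalence $\tau$ into an isomorphism of derived moduli stacks, compatibly with the obstruction theory — which rests on the representability machinery of \cite{Mod_obj_dg} and the kernel-type description of $\tau$.
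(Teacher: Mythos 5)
Your proposal is correct and follows essentially the same route as the paper, which derives \zcref{thm:triplesADHM} directly from \zcref{thm:tilting} and \zcref{lem:stpairsX}: the tilting equivalence is upgraded to an isomorphism of derived moduli stacks via the functoriality of the moduli of objects construction of \cite{Mod_obj_dg}, the open substack ${\bf ST}_1(Y)$ is transported across it, closed points are matched using \zcref{prop:spectral} and \zcref{lem:stpairsX}, and the determinant factorization follows from \zcref{prop:linebundles}. Your write-up simply makes explicit the steps the paper leaves implicit.
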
 

Furthermore, we define the derived moduli stack of framed ADHM sheaves on
$S$ as the scheme theoretic inverse image ${\bf ADHM}_{\sf fr}(S, \LL_1,
\LL_2) \coloneq \varphi^{-1}([\calO_S])$.  Note that its closed points are in
one-to-one correspondence with framed stable ADHM sheaves on $S$. Then,
as a consequence of \zcref{thm:triplesADHM}, we obtain:
\begin{corollary}\label{cor:pairsADHM} 
The isomorphism $\sigma$ induces an isomorphism of derived algebraic spaces
${\bf SP}_1(X) \to {\bf ADHM}_{\sf fr}(S,\LL_1, \LL_2)$.  
\end{corollary}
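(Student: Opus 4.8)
The plan is to deduce \zcref{cor:pairsADHM} directly from \zcref{thm:triplesADHM} by pulling back the factorization of the determinant map along the inclusion of the basepoint $[\calO_S] \in {\bf Pic}_0(S)$. Recall that by definition ${\bf SP}_1(Y)$ is the fiber product of ${\bf ST}_1(Y) \xrightarrow{\det} {\bf Pic}_0(Y)$ with ${\rm Spec}\,\IC \xrightarrow{[\calO_Y]} {\bf Pic}_0(Y)$, and that ${\bf ADHM}_{\sf fr}(S,\LL_1,\LL_2) \coloneq \varphi^{-1}([\calO_S])$, i.e.\ the fiber product of $\varphi: {\bf ADHM}(S,\LL_1,\LL_2) \to {\bf Pic}_0(S)$ with ${\rm Spec}\,\IC \xrightarrow{[\calO_S]} {\bf Pic}_0(S)$. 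The key observation is that, by \zcref{prop:linebundles}, the tilting functor induces an isomorphism ${\bf Pic}_0(Y) \to {\bf Pic}_0(S)$ carrying $[\calO_Y]$ to $[\rho_*\calO_Y] = [\calO_S]$ (the last equality because $\rho_*\calO_Y \simeq \calO_S$ for the projective bundle $\rho$). Hence the two basepoints correspond under this isomorphism, and the map $\det$ is identified with $\det_S$ after composing with it.

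First I would record that, since the square in the definition of ${\bf SP}_1(Y)$ is Cartesian and ${\bf Pic}_0(Y) \to {\bf Pic}_0(S)$ is an isomorphism, ${\bf SP}_1(Y)$ is equally well the derived fiber product of ${\det}_S : {\bf ST}_1(Y) \to {\bf Pic}_0(S)$ with ${\rm Spec}\,\IC \xrightarrow{[\calO_S]} {\bf Pic}_0(S)$. Next I would restrict to the open derived substack ${\bf ST}_1(X) \subset {\bf ST}_1(Y)$ of triples with support in $X$; since ${\bf SP}_1(X) \subset {\bf SP}_1(Y)$ is the corresponding open derived subspace, ${\bf SP}_1(X)$ is the derived fiber product of ${\det}_S|_{{\bf ST}_1(X)}$ with $[\calO_S]$. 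Then I would invoke \zcref{thm:triplesADHM}: the tilting functor carries ${\bf ST}_1(X)$ isomorphically onto ${\bf ADHM}(S,\LL_1,\LL_2)$ via $\sigma$, and under $\sigma$ the map ${\det}_S$ is identified with the forgetful map $\varphi$ (this is precisely the commutativity of the triangle in \zcref{thm:triplesADHM}). Forming the derived fiber product over $[\calO_S] \in {\bf Pic}_0(S)$ on both sides and using that $\sigma$ is an isomorphism of derived stacks intertwining the two maps to ${\bf Pic}_0(S)$, we get an induced isomorphism of derived fiber products ${\bf SP}_1(X) \xrightarrow{\sim} \varphi^{-1}([\calO_S]) = {\bf ADHM}_{\sf fr}(S,\LL_1,\LL_2)$, as claimed.

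Finally I would remark that the compatibility on closed points is automatic: by \zcref{lem:stpairsX} the assignment $(\calF,s)\mapsto(h(\calF),\pi_*(s))$ already identifies stable pairs on $X$ with framed stable ADHM sheaves on $S$, and this is exactly the closed-point restriction of the derived isomorphism just constructed, so the derived enhancement refines the set-theoretic bijection. The main (and essentially only) obstacle is a bookkeeping one: ensuring that the fiber product defining ${\bf SP}_1(X)$ and the scheme-theoretic inverse image $\varphi^{-1}([\calO_S])$ are both taken in the \emph{derived} sense and that \zcref{prop:linebundles} and the triangle in \zcref{thm:triplesADHM} are strictly (not merely up to $2$-isomorphism) compatible, so that base change produces a genuine isomorphism rather than just an equivalence on points. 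Given that \zcref{thm:triplesADHM} is stated as an isomorphism of derived stacks factoring the determinant, this compatibility is already in hand, and the corollary follows by taking fibers.
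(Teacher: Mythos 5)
Your proposal is correct and matches the paper's (implicit) argument: the corollary is stated there as an immediate consequence of \zcref{thm:triplesADHM}, obtained exactly as you describe by base-changing the commuting triangle $\det_S = \varphi\circ\sigma$ along $[\calO_S]\in{\bf Pic}_0(S)$, using \zcref{prop:linebundles} to match the basepoints $[\calO_Y]\mapsto[\rho_*\calO_Y]=[\calO_S]$. Your spelled-out version, including the restriction to the open substack of triples supported in $X$ and the closed-point check via \zcref{lem:stpairsX}, is a faithful elaboration of what the paper leaves unstated.
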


An immediate consequence of \zcref{cor:pairsADHM} is that the virtual
structure sheaf constructed in \cite[Section 4.1]{St_pairs_fourfolds}
and \cite[Theorem 4.1]{Counting_surfaces_I} define analogous structures on
the classical truncation ${}^{\sf cl}{\bf ADHM}_{\sf fr}(S,\LL_1, \LL_2)$.

\subsection{Topological invariants}\label{sec:topinv}
For future reference, in this section we collect some results on
topological invariants coherent sheaves $\calF$ on $Y$ with compact
support contained in $X$.  The topological invariants of any two
dimensional coherent sheaf $\calF$ on $Y$ are defined by triple
\[
(\ch_2(\calF), \ch_3(\calF), \chi(\calF)) \in H^4(Y,\IQ) \oplus H^6(Y,\IQ) \oplus \IZ.
\]
Since all homology and cohomology groups in this section will be defined
over $\IQ$, we will simply omit the coefficient ring in the following.
We will also use the symbol $H_c^\bullet$ for cohomology with compact
support.

Let $D_\infty \subset Y$ be the reduced closed complement of $X$. Note
that there is a canonical section $\sigma_\infty: S \to Y$ mapping $S$
isomorphically onto $D_\infty$.  Recall that $\sigma : S \to Y$ is the
unique section of $Y$ over $S$ which factors through the zero section
of $X$ over $S$. The latter will be denoted by $o:S\to X$.

\begin{proposition}\label{prop:topinvC} 
Assume that $H^{1}(S)=0$. Then there is a commutative diagram 
\be\label{eq:topdiag}
\xymatrix{ 
0 \ar[r] 
                   & H^{2k}_{c}(X) \ar[r]^-{i^c_*} \ar[d] 
                   & H^{2k}(Y) \ar[d] \ar[r]^-{\sigma_\infty^*}
                   & H^{2k}(S) \ar[r] \ar[d]
                   & 0 \\
0\ar[r] 
                   & H_{8-2k}(S) \ar[r]^-{\sigma_*} 
                   & H_{8-2k}(Y) \ar[r]  
                   & H_{4-2k}(S) \ar[r]
                   & 0\\
                   }
\ee
with exact rows, where the vertical arrows are isomorphisms induced by
Poincar\'e duality.
\end{proposition}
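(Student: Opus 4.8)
The plan is to obtain the diagram as a $T$-equivariant version of the long exact sequence of the pair $(Y, D_\infty)$, translated via Poincaré duality. First, I would recall that $Y = \IP(\LL_0^{-1}\oplus\LL_1^{-1}\oplus\LL_2^{-1})$ is a smooth projective fourfold with a $\IP^2$-bundle structure $\rho:Y\to S$, and $D_\infty = \sigma_\infty(S)$ is a smooth divisor isomorphic to $S$, with open complement $X = Y\setminus D_\infty$. The long exact sequence of the pair, together with the excision/Thom isomorphism $H^{2k}_c(X)\simeq H^{2k}(Y, D_\infty)$, gives
\[
\cdots \to H^{2k}_c(X) \xrightarrow{i^c_*} H^{2k}(Y) \xrightarrow{\sigma_\infty^*} H^{2k}(D_\infty) \xrightarrow{\partial} H^{2k+1}_c(X) \to \cdots,
\]
and I would identify $H^{2k}(D_\infty)\simeq H^{2k}(S)$ via $\sigma_\infty$. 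The bottom row is the homology Gysin sequence for the closed embedding $\sigma_\infty:S\hookrightarrow Y$ of codimension $3$ (so $8-2k$ in $S$ maps to $8-2k$ in $Y$, and the quotient is $H_{4-2k}(S)$ after the shift by the normal bundle rank), and the vertical isomorphisms are the Poincaré duality isomorphisms on the (smooth, oriented) manifolds $Y$, $S$; compatibility of the squares is the standard naturality of Poincaré–Lefschetz duality with respect to Gysin maps, which I would simply cite.

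The content is then to show the rows are \emph{short} exact, i.e.\ that the connecting maps vanish. Here I would use the $\IP^2$-bundle structure: by the projective bundle formula, $H^\bullet(Y)$ is a free module over $H^\bullet(S)$ on $1, \zeta, \zeta^2$ where $\zeta = c_1(\calO_Y(1))$, and the restriction $\sigma_\infty^*:H^\bullet(Y)\to H^\bullet(S)$ sends $\zeta$ to $c_1$ of the restricted tautological bundle, which is (up to a line bundle on $S$) the class of $D_\infty$ restricted. Since $D_\infty$ is the zero locus of the section $z_0$ of $\calO_Y(1)\otimes\rho^*\LL_0$, the restriction $\sigma_\infty^*$ is surjective in each even degree — any $\alpha\in H^{2k}(S)$ is hit by $\rho^*\alpha$ (as $\sigma_\infty$ is a section of $\rho$). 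Surjectivity of $\sigma_\infty^*$ in even degrees forces $\partial = 0$ into $H^{\mathrm{odd}}_c(X)$; combined with $H^1(S)=0$ — which via the bundle formula gives $H^{\mathrm{odd}}(Y)=0$ and, dually, the relevant odd compactly-supported groups vanish — the long exact sequence breaks into the short exact rows displayed. The hypothesis $H^1(S)=0$ is exactly what is needed to kill the odd-degree groups that would otherwise obstruct the splitting into short exact sequences; I would spell out that $H^1(S)=0$ implies $H^3(S)=0$ as well by Poincaré duality on the surface $S$, so all odd cohomology of $S$, hence of $Y$, vanishes.

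The main obstacle I anticipate is bookkeeping rather than conceptual: getting the degree shifts in the bottom (homology Gysin) row exactly right — the normal bundle of $\sigma_\infty(S)$ in $Y$ has rank $3$, so $\sigma_*$ raises homological degree by... actually it preserves it in the convention where $\sigma_*:H_{8-2k}(S)\to H_{8-2k}(Y)$ is the composite of the Thom iso $H_{8-2k}(S)\simeq H_{8-2k}(D_\infty)$ with pushforward, but one must check the cokernel is genuinely $H_{4-2k}(S)$ and that this matches $H^{2k}(S)$ under duality on $S$ (duality on a $4$-real-dimensional $S$ pairs $H^{2k}$ with $H_{4-2k}$). I would verify this compatibility by diagram-chasing the two Poincaré duality squares against the Gysin pushforward, citing e.g.\ the standard reference for naturality of $PD$ under proper pushforward; no genuinely hard input is needed beyond the projective bundle theorem and the pair sequence.
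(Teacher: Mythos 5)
Your overall strategy -- the compactly supported cohomology long exact sequence for the decomposition $Y = X \sqcup D_\infty$, matched by Poincar\'e--Lefschetz duality against a homology sequence, with $H^1(S)=0$ (plus duality on $S$) killing the odd-degree terms that would obstruct splitting into short exact rows -- is the same as the paper's. But there is a genuine gap at the step where you ``identify $H^{2k}(D_\infty)\simeq H^{2k}(S)$ via $\sigma_\infty$.'' The divisor $D_\infty=\{z_0=0\}$ is $\IP(\LL_1\oplus\LL_2)$, a $\IP^1$-bundle over $S$ and hence a \emph{threefold}; it is not the image of a section, and $H^{2k}(D_\infty)\simeq H^{2k}(S)\oplus H^{2k-2}(S)$ by the projective bundle formula. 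The long exact sequence of the pair forces the third term of the top row to be $H^{2k}(D_\infty)$; replacing it by $H^{2k}(S)$ destroys exactness in the middle. Concretely, for $S=\IP^2$ and $k=2$ one has $H^4_c(X)\simeq\IQ$, $H^4(Y)\simeq\IQ^3$, $H^4(S)\simeq\IQ$, so the displayed row cannot be exact. Your surjectivity argument via $\rho^*$ only addresses the right-hand end of the sequence (the composite $H^{2k}(Y)\to H^{2k}(D_\infty)\to H^{2k}(\sigma_\infty(S))$ is indeed onto); it says nothing about exactness in the middle, which is precisely what fails. Relatedly, the section has complex codimension $2$ (real codimension $4$) in $Y$, not ``codimension $3$,'' and in the bottom row you conflate the section at infinity $\sigma_\infty$ with the zero section $\sigma$: the map $\sigma_*$ in the statement arises because $X$ deformation retracts onto the \emph{zero} section, i.e.\ $\mathrm{Im}\bigl(H_{8-2k}(X)\to H_{8-2k}(Y)\bigr)=\sigma_*H_{8-2k}(S)$, not from a Gysin sequence for $\sigma_\infty$.

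In fairness, this defect is already present in the printed statement (and the paper's own proof makes the same silent substitution, writing $H^{2k}(S)$ for $H^{2k}(D_\infty)$ and $H_{4-2k}(S)$ for $H_{8-2k}(Y,X)\simeq H^{2k}(D_\infty)$), so you are reproducing an error rather than introducing one. The part of the proposition actually used downstream, in \zcref{cor:topinvD}, is only the left-hand square: under Poincar\'e duality, $\mathrm{Im}(i^c_*)=\sigma_*H_{8-2k}(S)$ coincides with the kernel of restriction to the full divisor $D_\infty$, which is what forces $\ch_2(\calF)=r\sigma_*[S]$ and $\ch_3(\calF)=\sigma_*(\beta)$ for sheaves supported in $X$. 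Your argument, once $H^{2k}(S)$ is corrected to $H^{2k}(D_\infty)$ on the right (and the extra summand $H_{6-2k}(S)$ restored in the bottom-right term), does establish this; a careful write-up should make that correction explicit rather than inheriting the conflation of $D_\infty$ with $\sigma_\infty(S)$.
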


\begin{proof} 
By Poincar\'e duality we have a commutative diagram
{\small
\be
\xymatrix{
\cdots \ar[r] & H^{k-1}(D_\infty) \ar[r] \ar[d] 
                   & H^k_{c}(X) \ar[r]^-{i^c_*} \ar[d] 
                   & H^k(Y) \ar[d] \ar[r] 
                   & H^k(D_\infty) \ar[r] \ar[d] 
                   & \cdots \\
\cdots \ar[r] & H_{9-k}(Y, X)\ar[r] 
                   & H_{8-k}(X) \ar[r]^-{i^h_*} 
                   & H_{8-k}(Y) \ar[r]  
                   & H_{8-k}(Y,X) \ar[r] 
                   & \cdots
                   }
\ee
}%
where the rows are canonical excision exact sequences and all vertical
arrows are isomorphisms.  Since the odd degree Betti numbers of $S$
vanish by assumption, for any $0\leq k \leq 2$, we obtain a second
commutative diagram 
\be 
\xymatrix{ 
0 \ar[r] 
                   & H^{2k}_{c}(X) \ar[r]^-{i^c_*} \ar[d] 
                   & H^{2k}(Y) \ar[d] \ar[r]^-{\sigma_\infty^*}
                   & H^{2k}(S) \ar[r] \ar[d]
                   & 0 \\
0\ar[r] 
                   & H_{8-2k}(X) \ar[r]^-{i^h_*} 
                   & H_{8-2k}(Y) \ar[r]  
                   & H_{8-2k}(Y,X) \ar[r]
                   & 0\\
                   }
\ee
with exact rows, where the vertical arrows are duality isomorphisms. Note
also that the map $\sigma_{*}: H_{2k}(S) \to H_{2k}(Y)$ factors
naturally through the pushforward map $o_*:H_{2k}(S) \to H_{2k}(X)$
associated to the zero section of $X$ over $S$.  The latter is an
isomorphism since $X$ is the total space of a vector bundle over $S$.
Finally, by Property $(2)$ listed in \cite[Ch. 2.6]{Rep_cpx_geom}, the
relative homology group $H_{8-2k}(Y,X)$ is isomorphic to the Borel--Moore
homology group $H^{BM}_{8-2k}(X)$ and since $X$ is the total space of a
real rank-four vector bundle over $S$ the pull-back map $H_{4-2k}^{BM}(S)
\to H_{8-2k}^{BM}(X)$ is an isomorphism. Moreover, since $S$ is compact,
$H^{BM}_{4-2k}(S) \simeq H_{4-2k}(S)$.

In conclusion, one obtains diagram \eqref{eq:topdiag}.
\end{proof} 

\begin{remark}\label{rem:topinvE} 
Provided that $H^1(S)=0$, \zcref{prop:topinvC} shows that the
topological invariants used in this paper coincide with those employed
in \cite{Counting_surfaces_I, Counting_surfaces_II}, which are defined
as elements of $H_c^\bullet(X, \IQ)$.
\end{remark}

\begin{remark}\label{rem:topinvF}
Since the map $i_h^*$ in \zcref{prop:topinvC} is injective, the natural
pairing $H^k(X) \otimes H_k(X) \to \IQ$ determines a pairing 
\[
\langle\ \ , \ \ \rangle : H^k(X) \otimes {\rm Im}(i^h_*) \to \IQ 
\]
for any $k\in \IZ$, $0\leq k \leq 8$. 
\end{remark}

\begin{corollary}\label{cor:topinvD} 
Given a nonzero two-dimensional coherent sheaf $\calF$ with compact
support contained in $X$, there exists a unique pair $(r, \beta)\in
\IZ\times H_2(S)$, $r \geq 1$, so that the Poincar\'e duality maps the pair
$(\ch_2(\calF), \ch_3(\calF))$ to $(r\sigma_*([S]), \sigma_*(\beta))$.
\end{corollary}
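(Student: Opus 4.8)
The plan is to leverage the exact commutative diagram \eqref{eq:topdiag} from \zcref{prop:topinvC}, which identifies, via Poincar\'e duality, the cohomology of $Y$ in degrees $2k$ ($0 \le k \le 2$) with a split short exact sequence whose left-hand term is $H_{8-2k}(S)$ (pushed forward by $\sigma_*$) and whose right-hand term is $H_{4-2k}(S)$. The key observation is that $\ch_k(\calF)$ lives in the image of $i^c_*$, i.e.\ in the kernel of $\sigma_\infty^*$, because the support of $\calF$ is contained in $X$ and hence disjoint from $D_\infty$; equivalently, $\calF$ defines a class in $H^\bullet_c(X)$ whose image in $H^\bullet(Y)$ is $\ch(\calF)$. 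Therefore, under the Poincar\'e duality isomorphisms, $\ch_2(\calF)$ and $\ch_3(\calF)$ land in $\sigma_*H_6(S)$ and $\sigma_*H_4(S)$ respectively.

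First I would make precise that the Chern classes $\ch_2(\calF) \in H^4(Y)$ and $\ch_3(\calF) \in H^6(Y)$ lie in the subgroups $i^c_* H^4_c(X)$ and $i^c_* H^6_c(X)$. Since $\calF \in \coh_{\sf c}(X)$ has a finite locally free resolution on $Y$ (or can be treated via its class in $K$-theory with compact support along $X$), its Chern character is the image of a class in $H^\bullet_c(X,\IQ)$ under the natural map $H^\bullet_c(X) \to H^\bullet(Y)$, which is exactly $i^c_*$ in the top row of \eqref{eq:topdiag}; in particular $\sigma_\infty^* \ch_k(\calF) = 0$. By exactness of the top row, $\ch_k(\calF) \in \ker(\sigma_\infty^*) = \operatorname{Im}(i^c_*)$.

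Next, applying the left vertical Poincar\'e duality isomorphism together with the identification of the bottom row, I get that $\mathrm{PD}(\ch_2(\calF)) \in \sigma_* H_6(S)$ and $\mathrm{PD}(\ch_3(\calF)) \in \sigma_* H_4(S)$. Since $H_6(S) \simeq H_0(S) \simeq \IZ$ is generated by the fundamental class $[S] \in H_4(S)$ viewed appropriately — more precisely $H_6(S)$ under the diagram is spanned by the image of $[S]\in H_4(S)$ under the dimension-shifting identification, giving $\mathrm{PD}(\ch_2(\calF)) = r\,\sigma_*([S])$ for a unique $r \in \IZ$ — and $H_4(S) = H_2(S)$ (again after the appropriate shift in \eqref{eq:topdiag}) receives $\mathrm{PD}(\ch_3(\calF)) = \sigma_*(\beta)$ for a unique $\beta \in H_2(S)$. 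The uniqueness of $(r,\beta)$ is immediate from the injectivity of $\sigma_*$, which is part of \zcref{prop:topinvC}. Finally, to see $r \ge 1$: since $\calF$ is nonzero and two-dimensional, $\pi_*\calF \ne 0$ by \zcref{cor:dirimA}, and the generic rank of $\pi_*\calF$ over $S$ equals $r$; more directly, $r$ computes the intersection number of the support cycle of $\calF$ with a generic fiber of $\pi$, which is a positive integer because the support surjects onto (a nonempty open of) $S$ — indeed a two-dimensional compactly supported sheaf on $X$ must dominate $S$, as $X \to S$ is affine and the fibers are affine planes, so a two-dimensional support meeting a fiber in positive dimension would be noncompact.

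The main obstacle I anticipate is bookkeeping the degree shifts in \eqref{eq:topdiag} correctly and pinning down that the relevant piece of $H_6(S)$ (resp.\ $H_4(S)$) in the bottom row is canonically generated by $[S]$ (resp.\ by $H_2(S)$) so that the coefficient $r$ is an honest integer with the stated sign, rather than merely a rational number. This is where one must invoke integrality of Chern classes together with the fact that $\sigma_*$ sends the integral lattice to the integral lattice, and identify the positivity of $r$ with a genuine intersection-theoretic computation on a fiber; the argument via $\pi_*\calF$ being a nonzero torsion-free (indeed, by \zcref{lem:dirimC}, closely related to locally free) sheaf of rank $r$ on $S$ is the cleanest route and also explains why $r$ cannot be negative or zero.
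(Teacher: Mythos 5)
Your proposal is correct and follows essentially the same route as the paper: the paper's proof simply notes that $\sigma_\infty^*(\ch_k(\calF))=0$ because the support of $\calF$ is disjoint from $D_\infty$, and then invokes the exact diagram of \zcref{prop:topinvC}, exactly as you do; your additional argument for $r\geq 1$ via the rank of the pushforward (cf.\ \zcref{prop:topinvA}) fills in a point the paper leaves implicit. The only slip is in the degree bookkeeping you yourself flag: for $k=2,3$ the bottom-row terms are $\sigma_*H_4(S)$ and $\sigma_*H_2(S)$ (not $H_6(S)$ and $H_4(S)$, which vanish or are misindexed for a surface), but this does not affect the argument.
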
 

\begin{proof} 
Since $\calF$ has compact support contained in $X$, we have 
\[
\sigma_\infty^*(\ch_{2k}(\calF)) = 0 
\]
in $H^{2k}(D_\infty)$ for $2\leq k\leq 4$.  Then the claim follows from
\zcref{prop:topinvC}.
\end{proof}

Since Poincar\'e duality is a canonical isomorphism, by a slight abuse
of notation we will simply write
\[
\ch_2(\calF) = r\sigma_*([S]), \qquad \ch_3(\calF) = \sigma_*(\beta) 
\]
in the following. Similarly, we will implicitly use the Poincar\'e
duality isomorphism $H^2(S) \to H_2(S)$.

We conclude this section with the  following result, which follows from
the Grothendieck--Riemann--Roch theorem for the projection $\rho: Y \to S$.
\begin{proposition}\label{prop:topinvA} 
Let $\calF$ be a nonzero two-dimensional coherent sheaf $\calF$
with compact support contained in $X$ and let $\calE \coloneq
\rho_*\calF$. Then the following relations hold in $H_\bullet(Y,\IQ)$: 
\begin{align}
\ch_2(\calF) & = \rk (\calE) \sigma_*[S]\\
\ch_3(\calF) &  = \sigma_*\left( \ch_1(\calE) + \frac12 \rk(\calE) \, c_1(S) \right)\\
\chi(\calF)
& =\chi(\calE).
\end{align}
\end{proposition}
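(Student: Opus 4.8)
The plan is to compute the Chern character of $\calF$ in terms of $\calE = \rho_*\calF$ by applying Grothendieck--Riemann--Roch to the projection $\rho: Y \to S$, using the locally free resolution of $\calF$ (or of $\calF^{**}$, modified by the zero-dimensional correction $\calG$) that we already have in hand. By \zcref{prop:spectral} and \zcref{lem:monadC}, $\calF$ is the degree-zero cohomology of the monad complex ${\bf M}(\calH)$ for $\calH = h(\calF)$, and after passing to a locally free resolution of $\calH$ we may assume $\calF = \calF^{**}$ is reflexive with a three-term locally free resolution; the general case follows by the exact sequence \eqref{eq:FseqAB}, since $\calG$ being zero-dimensional contributes only to $\ch_4$ and not to $\ch_2, \ch_3, \chi$. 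So it suffices to treat the resolved case, where ${\bf M}(\calH)$ as written in \eqref{eq:monadA}--\eqref{eq:monadB} computes $\ch(\calF)$ directly as an alternating sum of the three terms.

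First I would write $\ch(\calF) = \ch(\pi^*\calE) - \ch\bigl(\pi^*(\calE\otimes(\LL_1^{-1}\oplus\LL_2^{-1}))\otimes\calO_Y(-1)\bigr) + \ch\bigl(\pi^*(\calE\otimes\LL_1^{-1}\otimes\LL_2^{-1})\otimes\calO_Y(-2)\bigr)$ in $H^\bullet(Y)$, expand using $\ch(\calO_Y(1)) = e^{\zeta}$ where $\zeta \coloneq c_1(\calO_Y(1))$, and collect by degree. The resulting expression involves $\rho^*c_1(S)$ (through the Chern classes of the $\LL_i$ and the relation $\LL_1\otimes\LL_2\simeq\omega_S$, or more precisely through $c_1(\LL_1) + c_1(\LL_2)$ which appears via the $\calO_Y(1)$-twists whether or not the Calabi--Yau condition is imposed — here one must be a little careful about which combination of $c_1(\LL_i)$ actually shows up) together with $\zeta$, $\rho^*\ch_1(\calE)$, and $\rho^*\rk(\calE)$. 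Then I would push forward to $S$ via $\rho_*$, using the standard projective-bundle relations $\rho_*(1) = \rho_*\zeta = 0$, $\rho_*\zeta^2 = 1$, $\rho_*\zeta^3 = -c_1(\LL_0^{-1}\oplus\LL_1^{-1}\oplus\LL_2^{-1}) = c_1(\LL_1)+c_1(\LL_2) = c_1(S)$ (the last using $\LL_1\otimes\LL_2\simeq\omega_S$), and so on; since $\calF$ is supported away from $D_\infty$, the class pushed forward matches the one supported on the zero section, i.e.\ $\sigma_*$ of a class on $S$, which is how the right-hand sides of the three displayed formulas arise. Alternatively, and perhaps more cleanly, I would invoke GRR for $\rho$ directly: $\ch(R\rho_*\calF)\,\td(S) = \rho_*\bigl(\ch(\calF)\,\td(T_{Y/S})\,\td(S)\bigr)$, note $R\rho_*\calF = \rho_*\calF = \calE$ by \zcref{cor:dirimA}, and read off the degree-$0$, $1$, $2$ parts after dividing by $\td(S)$; the $\td(T_{Y/S})$ factor is explicitly computable from the Euler sequence of the $\IP^2$-bundle. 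The Euler characteristic relation $\chi(\calF) = \chi(\calE)$ is then immediate from $R\rho_*\calF = \calE$ and the Leray spectral sequence, independently of any of the Chern-class bookkeeping.

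The main obstacle I anticipate is purely bookkeeping: matching the Todd-class and $\calO_Y(1)$-twist contributions so that the $\frac12\rk(\calE)c_1(S)$ term in $\ch_3(\calF)$ comes out with the correct coefficient, and making sure the pushforward identifies the class correctly as $\sigma_*$ of something on $S$ rather than picking up a spurious $\sigma_{\infty,*}$ contribution — the latter is handled by \zcref{cor:dirimA} and \zcref{lem:supplemm} (support disjoint from $D_\infty$, zero-dimensional fibers), so it is really just a matter of carefully organizing the GRR computation. There is no conceptual difficulty; the statement is a routine consequence of GRR once the resolution of $\calF$ and the structure of the $\IP^2$-bundle $\rho: Y\to S$ are in place.
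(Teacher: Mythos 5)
Your second route --- GRR for $\rho$ applied directly to $\calF$, combined with $R\rho_*\calF = \rho_*\calF = \calE$ from \zcref{cor:dirimA} and the fact (\zcref{cor:topinvD}, not \zcref{lem:supplemm}) that $\ch_2(\calF)$ and $\ch_3(\calF)$ lie in the image of $\sigma_*$, so that $\sigma^*\td(T_{Y/S}) = \td(\LL_1\oplus\LL_2)$ supplies the $\frac12\rk(\calE)\,c_1(S)$ correction via $\LL_1\otimes\LL_2\simeq\omega_S$ --- is exactly the argument the paper intends, which it states in one line as a consequence of GRR for $\rho$. The only slip is the claim that the zero-dimensional sheaf $\calG$ does not contribute to $\chi$ (it contributes its length), but this is harmless since you obtain $\chi(\calF)=\chi(\calE)$ independently from $R\rho_*\calF=\calE$.
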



\subsection{Compactness}\label{sec:compactmoduli} 

In this section we prove a compactness result for moduli spaces of
$\PT_1$-stable pairs on a local fourfold $X$ assuming that $H^0 (\LL_i)=0$
for $1\leq i\leq 2$.


\begin{lemma}\label{lem:noneffective}
Let $D \in {\sf N}_1(S)\setminus\{0\}$  be non-effective divisor class
on $S$.  Then there exists an ample divisor class $H_S \in {\sf N}_1(S)$
so that $H_S \cdot D <0$.
\end{lemma}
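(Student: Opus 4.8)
The plan is to deduce this purely from convex duality in $\mathsf{N}_1(S)_{\BR}$, using only non-degeneracy of the intersection form and the definition of ``nef'', so that no geometric input is really needed. I read ``non-effective'' as ``not pseudo-effective'', that is $D\notin\overline{\mathrm{Eff}}(S)$, where $\overline{\mathrm{Eff}}(S)\subset\mathsf{N}_1(S)_{\BR}$ is the closure of the convex cone spanned by classes of effective divisors; for the surfaces we actually use (toric surfaces, and in particular $S=\IP^2$) this cone is rational polyhedral, hence closed, so this agrees with ``$D$ is not the class of an effective divisor''. Recall that $\mathsf{N}_1(S)_{\BR}$ is a finite-dimensional real vector space, the intersection pairing on it is non-degenerate, and a class is nef precisely when it pairs non-negatively with every irreducible curve on $S$.

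The key step is a separation argument. Since $D\neq 0$ does not lie in the closed convex cone $\overline{\mathrm{Eff}}(S)$, the Hahn--Banach separation theorem produces a linear functional on $\mathsf{N}_1(S)_{\BR}$ that is strictly negative at $D$ and non-negative on all of $\overline{\mathrm{Eff}}(S)$; non-negativity on the whole cone, rather than on a mere half-space, is automatic because the cone is stable under multiplication by $\BR_{\geq 0}$. By non-degeneracy of the intersection form this functional is $\gamma\mapsto\gamma\cdot H_0$ for a unique class $H_0\in\mathsf{N}_1(S)_{\BR}$. Hence $H_0\cdot D<0$, while $H_0\cdot\gamma\geq 0$ for every effective class $\gamma$; in particular $H_0\cdot C\geq 0$ for every irreducible curve $C\subset S$, so $H_0$ is nef. (This is, of course, just the elementary half of the duality between the nef and pseudo-effective cones.)

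Finally I would replace $H_0$ by a genuinely ample class without losing negativity against $D$. Fix any ample class $A$ on $S$; then $H_0+\epsilon A$ is ample for every $\epsilon>0$ (a nef class plus an ample class is ample). Since $H_0\cdot D<0$ \emph{strictly}, for all sufficiently small $\epsilon>0$ we still have $(H_0+\epsilon A)\cdot D=H_0\cdot D+\epsilon\,(A\cdot D)<0$, and, if an integral representative is wanted, we may further perturb inside the open ample cone to a nearby rational, hence after rescaling integral, class, the strict inequality being an open condition; this is the required $H_S$. I expect the only genuinely delicate point to be the reading of ``non-effective'': if it is meant literally as ``not the class of an effective divisor'', then the statement as phrased needs $\overline{\mathrm{Eff}}(S)$ to be closed so that a non-effective class automatically lies outside it, which holds for the toric surfaces at hand but not for an arbitrary smooth projective surface; granting that, everything above is a routine application of Hahn--Banach and the definition of nef and ample.
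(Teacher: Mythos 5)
Your proof is correct and follows essentially the same route as the paper: both produce a nef class pairing strictly negatively with $D$ via the duality between the nef cone and the (closure of the) effective cone, then add a small multiple of an ample class to land in the open ample cone, and finally perturb to a rational, hence integral, class. Your explicit Hahn--Banach separation and your caveat that one needs $\overline{\mathrm{Eff}}(S)$ to be closed (automatic for the toric surfaces to which the lemma is actually applied) are simply a more careful spelling-out of the duality step that the paper invokes in one line.
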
 

\begin{proof} 
We first show that there exists a real ample divisor class $\eta$
on $S$ so that $\eta\cdot D <0$.  Under the stated conditions, $D$
is a non-zero, non-effective divisor class on $S$.  Since the nef
cone of $S$ is dual to its K\"ahler cone, this further implies that
there exists a nef real divisor class $\eta_0\in {\sf N}_1(S)_\BR$ so
that $\eta \cdot D_i< 0$. Let $A$ be an arbitrary ample real divisor class.
If $A\cdot D \leq 0$, then $\eta \coloneq \eta_0+A$ is ample and $\eta
\cdot D <0$.  If $A\cdot D >0$, then $(\eta_0+\varepsilon A) \cdot D
<0$ for sufficiently small $\varepsilon \in \BR$, $\varepsilon >0$.
Then set $\eta \coloneq \eta_0+\varepsilon A$.

Since $\IQ$ is dense in $\BR$, and the real nef cone of $S$ is finitely
generated, it follows that we can find a rational ample divisor class
$\eta'$ satisfying $\eta'\cdot D<0$. Moreover, there exists a positive
integer $N$, so that $N \eta' \in {\sf N}_1(S)$.
\end{proof} 

For the following lemma we change base via a morphism ${\rm Spec}\,K \to
{\rm Spec}\, \IC$, with $K$ an arbitrary $\IC$-field. For convenience,
this base change will not be explicitly reflected in the notation.
\begin{lemma}\label{lem:compA} 
Assume $H^0 (\LL_i)=0$ for $i=1,2$, and let $Z\subset Y$ be a nonempty
reduced irreducible closed subscheme of $Y$ contained in $X$,
such that the structure sheaf $\calO_Z$ is purely two-dimensional.
Then $Z$ coincides with the scheme-theoretic image
of the zero section $\sigma: S\to Y$ as a closed subscheme of $Y$.
\end{lemma}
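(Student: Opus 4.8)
The plan is to show that the finite $\calO_S$-algebra $\calA\coloneq\rho_*\calO_Z$ is isomorphic to $\calO_S$, so that $Z$ becomes a section of $\pi\colon X\to S$ and the hypothesis $H^0(\LL_i)=0$ identifies it with $\sigma(S)$.

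First I would record the basic geometry: since $Z\subset X$ is disjoint from $D_\infty$, \zcref{lem:supplemm} gives that $\rho|_Z\colon Z\to S$ has zero-dimensional fibres, and, $Z$ being closed in the $S$-proper space $Y$, that $\rho|_Z$ is finite; as $\dim Z=2=\dim S$ with $S$ integral, it is finite surjective of some degree $m\ge1$. Thus $\calA$ is a torsion-free coherent sheaf of $\calO_S$-algebras with generic fibre the degree-$m$ field extension $K(Z)/K(S)$, and since $Z\subset X=\operatorname{Spec}_S\operatorname{Sym}_{\calO_S}(\LL_1^{-1}\oplus\LL_2^{-1})$ it is generated over $\calO_S$ by the two morphisms $\LL_i^{-1}\to\calA$ given by the fibre coordinates, equivalently by the sections $\xi_i\in H^0(Z,\rho^*\LL_i)=H^0(S,\calA\otimes\LL_i)$ obtained by restricting $z_i/z_0$ to $Z$ ($z_0$ trivialising $\calO_Y(1)|_Z$). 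Multiplication by $\xi_i$ gives commuting $\Phi_i\colon\calA\to\calA\otimes\LL_i$, so $(\calA,\Phi_1,\Phi_2)$ is the torsion-free Higgs sheaf attached to $\calO_Z$ by \zcref{prop:spectral}, and $Z=\operatorname{Supp}(\calO_Z)$ is recovered from it. Hence it suffices to prove $\Phi_1=\Phi_2=0$: then $\calA=\calO_S$, the finite birational map $\rho|_Z$ onto the normal surface $S$ is an isomorphism, $Z$ is a section of $\pi$ — i.e.\ the closed subscheme $\{z_1=z_2=0\}$ of $Y$ — and such a section corresponds to a pair $(s_1,s_2)\in H^0(\LL_1)\oplus H^0(\LL_2)=0$, giving $Z=\sigma(S)$.

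The core of the argument is then to show $\xi_1=\xi_2=0$; by symmetry I would focus on $\xi_1$. Since $\calA$ is torsion-free on the smooth — hence normal, $S_2$ — surface $S$, it is locally free in codimension one, so $\tr(\Phi_1)$ is defined off a codimension-$\ge2$ locus and extends to a global section; thus $\tr(\Phi_1)\in H^0(S,\LL_1)=0$. (More generally, the coefficients of the characteristic polynomial of $\Phi_1$ lie in the groups $H^0(S,\LL_1^{\otimes j})$, with vanishing subleading term.) Now I would argue by contradiction: if $\xi_1\ne0$, then $\xi_1$ is a nonzero section of the line bundle $\rho^*\LL_1$ on the integral surface $Z$, so its zero scheme is an effective Cartier divisor in the class $\rho^*c_1(\LL_1)$; pushing this divisor forward along the finite morphism $\rho|_Z$ and using that $(\rho|_Z)_*(\rho|_Z)^*$ is multiplication by $m$ on divisor classes shows that $m\,c_1(\LL_1)$ is represented by an effective divisor on $S$, hence $c_1(\LL_1)$ pairs non-negatively with every ample class. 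By the contrapositive of \zcref{lem:noneffective}, $c_1(\LL_1)$ is then an effective class, which — together with $H^0(\LL_1)=0$ and the irreducibility of $Z$ (so that $K(Z)/K(S)$ is a field and the generic value of $\xi_1$ is already in $K(S)$) — forces $\xi_1\in H^0(S,\LL_1)=0$, a contradiction. Therefore $\xi_1=\xi_2=0$ and $Z=\sigma(S)$.

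The hard part is exactly this last step: the linear relation $\tr(\Phi_1)=0$ is free, but excluding a genuine degree-$m\ge2$ multisection $Z\to S$ — equivalently controlling all the symmetric functions of the ``eigenvalues'' of $\Phi_1$ — is where the hypothesis $H^0(\LL_i)=0$ must be exploited most carefully, combining the effectivity input of \zcref{lem:noneffective} with the integrality of $Z$. Everything else (finiteness of $\rho|_Z$, the spectral reconstruction of $Z$ from $(\calA,\Phi_1,\Phi_2)$, and the final identification of a section of $\pi$ with $\sigma(S)$) is routine.
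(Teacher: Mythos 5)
Your overall route is genuinely different from the paper's: you push everything down to $S$ and argue with the zero divisor of the tautological section $\xi_1$, whereas the paper stays on $Y$, observes that $\calO_Z$ and $\calO_Z\otimes\calO_Y(1)\otimes\rho^*\LL_i$ are both $H$-stable (because $Z$ is integral), and kills the multiplication map $z_i\otimes\mathrm{id}$ by choosing an ample $H=\rho^{-1}(H_S)+m[D_\infty]$ for which the target has strictly smaller reduced Hilbert polynomial — the slope comparison reducing, via $D_\infty\cdot\sigma_*(\gamma_i)=0$, to exactly the ample class $H_S$ with $H_S\cdot c_1(\LL_i)<0$ supplied by \zcref{lem:noneffective}. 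So both arguments ultimately run on the same numerical input, and your "pushforward of an effective divisor" computation is a legitimate substitute for the Hilbert-polynomial comparison.

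The gap is in how you close the contradiction. Having shown that $\xi_1\neq 0$ forces $m\,c_1(\LL_1)$ to be an effective class, hence $H_S\cdot c_1(\LL_1)\geq 0$ for every ample $H_S$, you pass to the contrapositive of \zcref{lem:noneffective} to conclude "$c_1(\LL_1)$ is effective (or zero)" and then try to upgrade this to $\xi_1\in H^0(S,\LL_1)$. That last inference rests on the claim that the generic value of $\xi_1$ lies in $K(S)$, which is unjustified and in fact false whenever $m\geq 2$: the $\xi_i$ generate the degree-$m$ extension $K(Z)/K(S)$, so at least one of them is not in $K(S)$. You flag this yourself as the unresolved "multisection" difficulty, but the detour is unnecessary. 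The hypothesis $H^0(\LL_1)=0$ is used by the paper precisely to assert that $c_1(\LL_1)$ is a \emph{nonzero, non-effective} class, so \zcref{lem:noneffective} (applied directly, not contrapositively) produces an ample $H_S$ with $H_S\cdot c_1(\LL_1)<0$, which flatly contradicts your inequality $H_S\cdot c_1(\LL_1)\geq 0$ — for every $m\geq 1$, with no need to descend $\xi_1$ to $S$ and no need for the trace computation (which, as you note, only controls one symmetric function anyway). With that one rewiring your argument is complete and parallel in strength to the paper's; the only remaining caveat, shared with the paper, is the implicit step that $H^0(\LL_i)=0$ forces the numerical class $c_1(\LL_i)$ to be nonzero and non-effective in ${\sf N}_1(S)$.
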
 

\begin{proof} 
The image of the zero section is the common zero locus of the
tautological sections  $z_i \in H^0(\calO_Y(1) \otimes \rho^*\LL_i)$,
$1 \leq i\leq 2$. Under the stated conditions, it suffices to prove that
the multiplication map
\[
\calO_Z \xrightarrow{z_i\otimes {\rm id}} \calO_Z \otimes \calO_Y(1) \otimes \rho^*\LL_i 
\]
is zero for $1\leq i \leq 2$.

Since $Z$ is reduced irreducible, both the domain and the target of the
above morphism are $H$-stable for any ample divisor class $H$ on $Y$.
Therefore it suffices to show that there exists an ample divisor class
$H$ on $Y$ so that the reduced Hilbert polynomials satisfy the inequality
\[
p_H( \calO_Z \otimes \calO_Y(1) \otimes \rho^*\LL_i ; t) \prec
p_H(\calO_Z; t) 
\]
for $1\leq i \leq 2$. Since the support of $\calO_Z$ is contained in $X$, 
by \zcref{cor:topinvD}, we have 
\[
\ch_2(\calO_Z) = r\sigma_{*}[S], 
\]
for some $r\geq 1$. Let also $\ch_1(\LL_i) = \gamma_i \in H_2(S, \IQ)$.
Then a straightforward computation shows that
\[
p_H ( \calO_Z \otimes \calO_Y(1) \otimes \rho^*\LL_i ) - p_H(\calO_Z) =
\frac{2}{r} \frac{H \cdot \sigma_*(\gamma_i)} {H^2 \cdot \sigma_*([S])} t + \delta,
\]
where $\delta$ consists of degree-zero terms.

In conclusion, it suffices to show that there exists an ample divisor $H$
on $Y$ so that
\[
H \cdot \sigma_*(\gamma_i) <0
\]
for $1\leq i \leq 2$.  Given an ample divisor $H_S$ on $S$, any divisor
of the form $H= \rho^{-1}(H_S) + m [D_\infty]$ is ample for sufficiently
large $m>0$ since $D_\infty$ is relatively ample over $S$.  Thus, since
$D_\infty \cdot \sigma_*(\gamma_i)=0$, it suffices to find an ample
divisor $H_S$ on $S$ so that
\[
H_S \cdot \gamma_i<0
\]
for $1\leq i\leq 2$.
Since $H^0(\LL_i)=0$ by assumption, it follows
that $\LL_i \simeq \calO_S(D_i)$ for a non-zero, non-effective divisor
$D_i$ on $S$.  Then the claim follows from \zcref{lem:noneffective}.
\end{proof} 

Under the same assumptions as in \zcref{lem:compA}, we further obtain:
\begin{corollary}\label{cor:compB} 
Let $\calF$ be a nonzero purely two-dimensional coherent sheaf on $Y$
with compact support contained in $X$.  Then there exist two positive
integers $k_i>0$, $1\leq i\leq 2$, so that the scheme theoretic support of
$\calF$ is contained in the common zero locus of the sections $z_i^{k_i}
\in H^0 (\calO_Y(1) \otimes \rho^*\LL_i)$, $1\leq i \leq 2$. 
\end{corollary}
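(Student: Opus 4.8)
The plan is to pin down the reduced support of $\calF$ using \zcref{lem:compA}, and then to upgrade the resulting set-theoretic statement to the scheme-theoretic one by a nilpotency argument.

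First I would let $Z\subset Y$ be the scheme-theoretic support of $\calF$, which lies in $X$ by hypothesis. Since $\calF$ is purely two-dimensional, every irreducible component of $Z$ has dimension two; endowing such a component with its reduced structure yields a reduced irreducible closed subscheme of $Y$ of dimension two contained in $X$, whose structure sheaf --- integral of dimension two --- is purely two-dimensional. \zcref{lem:compA} then forces every such component to coincide with the scheme-theoretic image of the zero section $\sigma\colon S\to Y$; as $\sigma(S)$ is irreducible, $Z$ has a single component and $Z_{\sf red}=\sigma(S)$.

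Next I would invoke the fact, already used in the proof of \zcref{lem:compA}, that $\sigma(S)$ is the common zero locus of the tautological sections $z_i\in H^0(\calO_Y(1)\otimes\rho^*\LL_i)$, so that $Z_{\sf red}\subset V(z_i)$ for $1\le i\le 2$. Writing $\calN\subset\calO_Z$ for the nilradical, so $\calO_Z/\calN=\calO_{Z_{\sf red}}$, I would fix a local frame of $\calO_Y(1)\otimes\rho^*\LL_i$ and observe that the local equation $f_i\in\calO_Z$ of $z_i|_Z$ maps to $0$ in $\calO_{Z_{\sf red}}$, since $Z_{\sf red}\subset V(z_i)$; hence $f_i\in\calN$. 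Because $Y$ is Noetherian and $Z$ is a closed subscheme, $\calN$ is a coherent nil ideal, hence nilpotent: $\calN^{N}=0$ for some $N\ge 1$. Taking $k_1=k_2=N$, the section $z_i^{k_i}$ of $\calO_Y(k_i)\otimes\rho^*\LL_i^{\otimes k_i}$ has local equation $f_i^{N}\in\calN^{N}=0$ on $Z$, so $z_i^{k_i}$ vanishes on $Z$; equivalently $Z$ is contained, as a closed subscheme, in the common zero locus of $z_1^{k_1}$ and $z_2^{k_2}$, which is the assertion.

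I do not expect a genuine obstacle here: the real content --- that the only reduced irreducible two-dimensional closed subscheme of $Y$ contained in $X$ is $\sigma(S)$ --- is precisely \zcref{lem:compA}, and the remaining steps are formal once one tracks the twists by $\calO_Y(1)\otimes\rho^*\LL_i$ and uses that the nilradical of a Noetherian scheme is nilpotent. The one step deserving a sentence of care is the passage from ``$z_i$ vanishes on $Z_{\sf red}$'' to ``the local equation of $z_i|_Z$ lies in $\calN$,'' which is immediate from the definition of the reduced closed subscheme and the fact that $\calO_Y(1)\otimes\rho^*\LL_i$ is locally free of rank one.
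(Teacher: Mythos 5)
Your proof is correct and follows the same route as the paper's: apply \zcref{lem:compA} to the irreducible components of $Z_{\sf red}$ to conclude that $\calF$ is set-theoretically supported on the zero section, then use coherence/Noetherianity to lift this to a scheme-theoretic containment in $V(z_1^{k_1},z_2^{k_2})$. The paper compresses the final nilpotency step into the single remark ``since $\calF$ is coherent, this implies the claim,'' which you have simply spelled out.
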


\begin{proof} 
Let $Z$ be the scheme theoretic support of $\calF$. Since $\calF$ purely
two dimensional, so is $\calO_Z$. Then \zcref{lem:compA} shows that each
irreducible component of $Z_{\sf red}$ coincides with the image of the
zero section. Hence $\calF$ is set theoretically supported on the zero
section. Since $\calF$ is coherent, this implies the claim. 
\end{proof}

In conclusion, resetting the ground field to $\IC$, we have:
\begin{proposition} \label{prop:compC}
If $H^0 (\LL_i)=0$ for $i=1,2$, then the moduli space $SP_1(X; v)$
of $\PT_1$-stable pairs is proper over $\IC$ for any topological
invariants $v$.
\end{proposition}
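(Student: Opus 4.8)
The plan is to establish properness via the valuative criterion, by realizing $SP_1(X;v)$ as an open subspace of a proper moduli space on $Y$ and then using \zcref{cor:compB} to show that any limit remains supported inside $X$. First I would recall, from \zcref{sect:pairs_moduli}, that $SP_1(X;v)$ is quasi-projective — hence of finite type and separated over $\IC$ — and that it is an open subspace of $SP_1(Y;v)$, the moduli space of $\PT_1$-stable pairs with invariant $v$ on the \emph{projective} fourfold $Y$. I would take for granted that $SP_1(Y;v)$ is proper over $\IC$; this should either be available from \cite{St_pairs_fourfolds, Counting_surfaces_I} or follow from boundedness of the moduli problem together with a Langton-type semistable reduction argument for $\PT_1$-stable pairs. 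Given this, it remains to verify that every limit point reached inside $SP_1(Y;v)$ starting from $SP_1(X;v)$ actually lies in $SP_1(X;v)$.

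Concretely, I would run the valuative criterion as follows. Let $R$ be a discrete valuation ring with fraction field $K$, and let $(\calF_K, s_K)$ be a $\PT_1$-stable pair on $X_K$ with $v(\calF_K) = v$, i.e.\ a $K$-point of $SP_1(X;v)$. Composing with the open immersion $SP_1(X;v) \hookrightarrow SP_1(Y;v)$ and invoking properness of the target, this extends uniquely to an $R$-point, i.e.\ to an $R$-flat family $(\calF_R, s_R)$ of $\PT_1$-stable pairs on $Y_R$. The goal is then to show that the central fibre $\calF_0$ is still scheme-theoretically supported in $X$: once this is known, the closed point of ${\rm Spec}\,R$ maps into the open subspace $SP_1(X;v)$, hence — ${\rm Spec}\,R$ being local — so does the entire $R$-point, while uniqueness of the lift is inherited from separatedness; this is precisely the valuative criterion for properness of $SP_1(X;v)$.

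The key geometric input for controlling $\calF_0$ is \zcref{cor:compB}. Applying it to $\calF_K$ over the field $K$ — this is the only place the hypothesis $H^0(\LL_i)=0$ is used, through \zcref{lem:compA,lem:noneffective} — produces integers $k_1, k_2 > 0$ with $z_i^{k_i}\otimes\mathrm{id}_{\calF_K} = 0$ for $i = 1,2$, where $z_i \in H^0(\calO_Y(1)\otimes\rho^*\LL_i)$ are the tautological sections. I would then propagate this vanishing to the whole family: since $\calF_R$ and $\calF_R \otimes \calO_Y(k_i)\otimes\rho^*\LL_i^{\otimes k_i}$ are $R$-flat, the target of the multiplication map $z_i^{k_i}\otimes\mathrm{id}_{\calF_R}$ has no $R$-torsion, so a morphism into it vanishing after $\otimes_R K$ must vanish identically; hence $z_i^{k_i}\otimes\mathrm{id}_{\calF_R} = 0$ for $i = 1,2$. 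Consequently ${\rm Supp}(\calF_R)$ is contained in the common zero locus $V(z_1^{k_1})\cap V(z_2^{k_2})\subset Y_R$, which — because $z_0, z_1, z_2$ have no common zero on $Y$ — is disjoint from $D_\infty$ and therefore contained in $X_R$. In particular ${\rm Supp}(\calF_0)\subset X$, and $(\calF_0, s_0)$ is a genuine $\PT_1$-stable pair on $X$ with invariant $v$, the purity and one-dimensional-cokernel conditions being inherited from its being a point of $SP_1(Y;v)$.

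The step I expect to be the main obstacle is the input that $SP_1(Y;v)$ is proper for the projective $Y$ (equivalently: boundedness plus a Langton-type argument for $\PT_1$-pairs); everything else uses only \zcref{cor:compB}, the openness of $SP_1(X;v)\subset SP_1(Y;v)$ recorded in \zcref{sect:pairs_moduli}, and the two elementary facts about $R$-flat sheaves and about $V(z_1^{k_1})\cap V(z_2^{k_2})$. As a variant avoiding the valuative criterion altogether, one could first sharpen \zcref{cor:compB} to a $v$-uniform bound, namely $z_i^{\,r}\calF = 0$ for \emph{every} $\PT_1$-stable pair $(\calF,s)$ on $X$ with $v(\calF)=(r,\beta,n)$ — a short purity argument, since the stalk of $\calF$ at the generic point of the zero section has length $r$, so $z_i^{\,r}\calF$ is a lower-dimensional subsheaf of the pure sheaf $\calF\otimes\calO_Y(r)\otimes\rho^*\LL_i^{\otimes r}$ and hence zero — which shows that ${\rm Supp}(\calF)$ always lies in the fixed projective subscheme $V(z_1^{\,r})\cap V(z_2^{\,r})\subset X$; then the condition ``${\rm Supp}\subset X$'' is both open and closed on $SP_1(Y;v)$, so $SP_1(X;v)$ is a union of connected components of the proper scheme $SP_1(Y;v)$ and thus proper.
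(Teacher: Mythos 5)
Your proposal is correct and follows essentially the same route as the paper: the paper likewise verifies the valuative criterion by extending the family inside the proper space $SP_1(Y;v)$ (whose properness it also simply quotes), applies \zcref{cor:compB} over the generic fibre — this is exactly why that corollary was stated after a base change to an arbitrary $\IC$-field — and uses flatness to propagate the containment of the support in $Z^{k_1,k_2}$ to the central fibre. Your explicit torsion-freeness argument for that last propagation step, and the closing variant with the uniform bound $k_i=r$, are fine refinements but not departures from the paper's proof.
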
 

\begin{proof} 
We have to check the valuative criterion for properness. Let $T =
{\rm Spec}(R)$, where $R$ is a d.v.r.\ over $\IC$. Let $o \in T$ be the
unique closed point and let $U =T \setminus \{0\}$. Let the $(\scrF_U,
s_U)$ be a ${\PT}_1$ stable pair associated to an arbitrary morphism
$U \to SP_1(X;v)$.  Since $SP_1(X,v) \subset SP_1(Y; v)$ is open, and
$SP_1(Y)$ is proper over $\IC$, the pair $(\scrF_U, s_U)$ admits a unique
extension $(\scrF_T, s_T)$ to a $T$-flat family of $\PT_1$-stable pairs
on $Y$. Hence it suffices to prove that the restriction $(\scrF_o, s_o)$
to the closed point belongs to $SP_1(X_o; v)$.

For any pair $(k_1, k_2) \in \IZ^2$, $k_1, k_2>0$, let $Z^{k_1,
k_2}\subset Y$ denote common zero locus of $z_i^{k_i}$, $1\leq i \leq 2$.
By \zcref{cor:compB}, the scheme-theoretic support of the sheaf $\scrF_U$
is a closed subscheme of $Z^{k_1,k_2}_U$ for some positive integers $(k_1,
k_2)$.  By flatness, this implies that the scheme-theoretic support of
$\scrF_o$ is a closed subscheme of $Z^{k_1, k_2}_o \subset X_o$. See
for example \cite[Corollary V.2.16]{Coha_Yangians}. This proves the claim.
\end{proof}

\begin{remark} \label{rem:compcond}
If $\LL_1 \otimes \LL_2 \simeq \omega_S$, the vanishing conditions
$H^0(\LL_i)=0$, $1\leq i \leq 2$, are equivalent to
$H^k(\LL_1) =0$ for $k=0,2$, as well as $H^k (\LL_2)=0$ for $k=0,2$,
by Serre duality.
\end{remark}

\section{Gauge theory}\label{sec:4d}

This section consists of a detailed treatment of Coulomb branch
localization in topologically twisted ADHM gauge theory on a toric
surface $S$. Note that $X$ is also toric, hence it admits a natural
rank-four torus action $T_X \times X \to X$.  In the following, we will
work equivariantly with respect to the rank-three subtorus $T\subset T_X$
that preserves the Calabi--Yau structure.

Within this section, $\chi(S, \ )$ denotes the $K$-theoretic
equivariant Euler characteristic, namely the alternating sum of the
characters.  
Our present construction of the Coulomb branch partition function
is based on the assumption that $\chi(S,\LL_1)$ has no constant terms,
which translates into the absence of certain zero modes
in the perturbative part.  (We leave the study of these for future work.)

\subsection{Fields and equations}

Consider a supersymmetric gauge theory on $S$, with gauge group
$\grp{U}(k)$.  Let $V_k$ be the $k$-dimensional vector bundle over $S$,
associated to the principal $\grp{U}(k)$ bundle whose curvature we denote
by $F$.  Let $M$ and $N$ be two copies of the constant $n$-dimensional
vector space $\BC^n$.  The matter content includes two complex adjoint
scalars $B_{i} \in H^0 (S, \End(V_k) \otimes \LL_i)$, for $i=1,2$,
two fundamental scalars $I \in H^0 (S, \Hom(V_k,N))$ and $J \in
H^0 (S, \Hom(N,V_k) \otimes K_S)$, and fermionic scalars $\Upsilon
\in \Pi H^0 (S, \Hom(V_k,M))$ and $\Psi \in \Pi H^0 (S, \Hom(M,V_k)
\otimes K_S)$.  Note that the contribution of fermionic scalars to the
beta function cancels that of fundamental scalars, hence the theory is
conformal.\footnote {In the topological theory, we are allowed to break
spin-statistics.  Such matter arises e.g.\ in the study of supergroup
gauge theory \cite{Dijkgraaf:2016lym}.}

The partition function of the topological theory localizes to $Q$-fixed
point configurations, which in the present case include \cite[section
8]{Nekrasov:2023nai}
\begin{equation}\label{eq:Qfixed}
F^{(2,0)} + [B_1,B_2] + IJ = 0.
\end{equation}
Let $\mred_k$ be (some compactification of) the moduli space of solutions
to our equations modulo gauge equivalence, which is a disjoint union
over topological sectors, labeled by $\ch(F)=(k, c_1(F), \ch_2(F))$.

We want to compute from first principles generating sums
of K-theoretic virtual invariants, schematically of the form
\begin{equation}
 Z_k^C \coloneq \chi (\mred_k,
 \widehat{\cO^{vir} \otimes \wedge^\bullet_M \calF}),
\end{equation}
where $\calF$ denotes the universal object on $\mred_k \times S$.

Our theory makes sense for any K\"ahler surface, but for the rest of
this section we assume that $S$ is toric, and compute $Z_k^C$ by virtual
localization.  Although one could use as observable any equivariantly
closed form combined with polynomials in the curvature (e.g.\ surface
observables, Chern-Simons terms, etc.), we weigh the solutions to the
above monopole equations by
\begin{equation} \label{eq:fug-pt}
I \coloneq \int_S \ch(F) \td(S) (\tau_2 + \omega_S),
\end{equation}
where $\tau_2$ is an arbitrary complex fugacity
and $\omega_S$ is the K\"ahler form on $S$.

\begin{remark}
Up to a sign, the first term is the classical part of Nekrasov--Okounkov,
with a small improvement \cite{Nakajima:2005fg}.  Up to a constant,
it also equals the virtual dimension of the moduli space, as explained
in \zcref{sec:vdim}.
\end{remark}

\begin{remark}
Note that, in the present case, a specific compactification of
the moduli space of solutions to the above monopole equations is
provided by the moduli space of framed ADHM sheaves constructed in
\zcref{cor:pairsADHM}. Then, the above expression for $I$ is related
to the topological invariants of an ADHM sheaf $(\calE, \Phi_1, \Phi_2,
\phi)$ by
\[
I = \chi(\calE) \tau_2 + \int_S \left(\ch_1(\calE) + \frac{r}{2} c_1(S) \right)\omega_S.
\]
Below it will be convenient to view all Chern classes as homology classes 
by Poincar\'e duality. 

We next show that $I$ can be written explicitly in terms of topological
invariants of the associated $\PT_1$-stable pair $(\calF, s)$ on $X$
using \zcref{prop:topinvA}. As shown in loc.~cit., we have
\[
\ch_3(\calF) = \sigma_*(\ch_1(\calE) + \frac{r}{2} c_1(S)),
\qquad \chi(\calF) = \chi(\calE).
\]
Since the K\"ahler form $\omega_S$ is the restriction of the K\"ahler
form $\omega_X$ via the zero section, we obtain
\[
I  =\chi(\calF) \tau_2+ \langle \omega_X,  \ch_3(\calF) \rangle 
\]
where the pairing $\langle\ , \ \rangle$ is defined in \zcref{rem:topinvF}. 

Choosing a basis $\{\gamma^i\}$ in $H^2(S,\IQ)$ with $1\leq i \leq b_2(S)$, 
we define the gauge theory counting parameters as 
\[
\qq \coloneq \exp \tau_2, \qquad 
\QQ_i \coloneq \exp t_i
\]
where $t_i$ are the coefficients of $\omega_S$ with respect to the
basis $\{\gamma^i\}$.  By \zcref{cor:topinvD}, we get $\ch_3(\calF) =
\sigma_*(\beta)$ for a unique element $\beta\in H_2(S)$. Let $\beta_i$
be the coefficients of $\beta$ with respect to the Poincar\'e dual
basis $\{\gamma_i\}$.  Then we obtain
\be \label{eq:gaugeweight}
e^{I} = \qq^{\chi(\calF)} \QQ^\beta, 
\ee
where 
\[
 \QQ^\beta \coloneq
 \prod_{i=1}^{b_2(S)} \QQ_i^{\beta_i}.
\]
This shows that the natural counting parameters in gauge
theory coincide with those used in $\PT_1$-stable pair theory
\cite{Counting_surfaces_I,Counting_surfaces_II}.
\end{remark}

\subsection{Toric data} \label{sec:toricdata}

Here we collect basic facts on toric geometry needed in the
construction of Coulomb branch partition functions for toric
surfaces. We will review the construction of a toric variety as a
symplectic K\"ahler quotient of the linear space $\IC^N$ by the real
torus $U(1)^M$. A good reference is the book by \textcite{audin}.
Abusing the language, by a Hamiltonian action of an algebraic torus
action on a symplectic manifold we will mean a Hamiltonian action of
its maximal compact subgroup.

The rank $N$ torus  $(\BC^N)^\times$ acts naturally on $\IC^N$ by 
\[
x_a \mapsto \q_a x_a
\]
where $(x_1,\dots,x_N)$ are the standard linear coordinates on $\BC^N$.
This is a Hamiltonian action with Hamiltonian function 
$H=\sum_a p_a \ep_a$, where the maximal compact subgroup $U(1)^N \subset
(\IC^{\times})^N$ is parameterized by $\q_a = \exp \beta \ep_a$, and the
momenta $p_a$ are given by $p_a = |x_a|^2$, $1\leq a \leq N$.

Given an integer-valued matrix $Q: \BZ^N \to \BZ^M$, with $0<M <N$,
we construct a second torus action
$(\IC^\times)^M \times \IC^N \to \IC^N$ given by 
\[
 (z_\alpha,x_a) \mapsto (\prod_{\alpha=1}^{M} z_\alpha^{Q_a^\alpha} x_a)
\]
Below we will use the notation $T_Q \coloneq (\IC^{\times})^M$ and we
will parameterize the elements of its real compact subgroup by
\[
 z_\alpha = \exp(\gamma_\al).
\]
Then note that the $T_Q$-action on $\IC^N$ is again Hamiltonian, with
momentum maps
\begin{equation}
\mu_\al (p) = \sum_a p_a Q^\al_a, \quad \al = 1,\dots,M
\end{equation}
Given a regular value $t \in \BR^M$ of $\mu$, we construct the toric
variety $Z$ as the symplectic quotient $Z \coloneq \mu^{-1}(t)/T_Q$. This
is a smooth K\"ahler variety of complex dimension $d \coloneq \dim(Z)
= N-M$. Moreover, as a complex variety, $Z$ also admits a GIT quotient
presentation $Z \simeq (\IC^N \setminus \Delta)/T_Q$, where $\Delta
\subset \IC^N$ denotes the unstable locus.

Next note that the $(\IC^\times)^N$-action on $\IC^N$ induces
a rank $d$ torus action $T_d \times Z\to Z$, where $T_d \coloneq
(\IC^\times)^{d}$. For each $0\leq p \leq d$, the set of irreducible
subvarieties of $Z$ preserved by the $T_d$-action is finite, and it will
be denoted by $\Delta_p (Z)$. In particular, $\Delta_0(Z)$ coincides with
the $T_d$-fixed locus. Moreover, note that the subvarieties belonging to
$\Delta_{d-1}(Z)$ are precisely the toric divisors $\{x_a=0\} \cap Z$,
$1\leq a \leq N$.

We also recall that the second integral cohomology group $H^2(Z)$ is
isomorphic to $\IZ^M$, and the odd degree cohomology vanishes. Moreover,
the Euler characteristic of $Z$ is given by $\chi(Z)=|\Delta_0(Z)|$;
if $Z$ is a smooth compact surface we also have $\chi(Z) = |\Delta_1(Z)|$.

The Picard group of $Z$ is freely generated by a collection of line
bundles $L_\al$, $1\leq \alpha\leq M$, associated to the canonical
characters of $T_Q$. Moreover, by construction, there is also a canonical
collection of  line bundles $L_a$ associated to the toric divisors
$\{x_a =0\}$, $1\leq a\leq N$.  For each $1\leq a \leq N$, one has a
line bundle isomorphism
\be\label{eq:lbrel}
L_a \simeq \otimes_{\alpha=1}^M L_\alpha^{Q_a^\alpha}, 
\ee
and we  also have $c_1 (Z) = \sum_{a=1}^Nc_1 (L_a)$.  In addition,  note
that each line bundle $L_a$ has a natural $T_d$-equivariant structure.

Every fixed point $v \in \Delta_0 (Z)$ defines a set $I_v \subset
\{1,\dots,N \}$ of cardinality $d$, such that $p_a = 0$ at $v$ for
every $a \in I_v$.  Denote by $L_v: \BZ^N \to \BZ^d \subset \BZ^N$ the
map projecting as the identity on $I_v$, thus labeling its elements,
and $L_v^c = 1-L_v$.

We require that $Q$ is such that fixed points are isolated, i.e.\
$\Delta_0(Z)$ consists of finitely many closed points. Then the
restriction $Q_v \coloneq L_v^c\cdot Q^t$ to the lattice $\{
(n_1,\dots,n_N) \in \BZ^N \mid n_a = 0\,\forall a \in I_v\}$ is
invertible, and it satisfies $Q_v \gamma = L^c \ep$, since $v$ is a
fixed point.  The momenta at $v$ satisfy $p = L^c p$.  Plugging their
value into the moment map, we compute the Hamiltonian $H_v = \ep^t L^c
Q^{-t}_v t$.  We define local weights at $v \in \Delta_0$
\begin{equation}
w: \BZ^N \to \BZ^d, \quad
w = L_v - L_v \cdot Q^t \cdot Q^{-1}_v \cdot L^c_v
\end{equation}
such that they compute the character of the tangent space to $Z$
at $v$.  Finally, let $q_\mu = \exp (\beta \sum_a w_{a,\mu} \ep_a)$
for $\mu=1,\dots,d$, as well as $P \coloneq \prod_{\mu=1}^d (1-q_\mu)$
and $Q \coloneq \prod_{\mu=1}^d q_\mu$.

In the following $S$ will be a smooth compact toric surface
constructed as a toric quotient $$(\IC^N \setminus \Delta)/T_Q~,$$
where $T_Q= (\IC^\times)^{N-2}$. At the same time, $X$ will be a
toric Calabi--Yau fourfold of the form $(\IC^2 \times (\IC^N \setminus
\Delta))/T_{\widetilde Q}$, where the integral matrix ${\widetilde Q}:
\IZ^{N+2}\to \IZ^{N-2}$ has block form
\[
{\widetilde Q} = \left(Q \big| \Lambda\right) 
\]
for a linear map $\Lambda: \IZ^{2} \to \IZ^{N-2}$. The above toric
presentation implies that $X$ is naturally identified with the total space
of a direct sum of two line bundles $\LL_1\oplus \LL_2$ on $S$, where
\[
\LL_i \simeq \prod_{\alpha=1}^M L_\alpha^{\Lambda_i^\alpha}. 
\]
Furthermore, as shown above, one has generic torus actions $T_2 \times S$
and $T_4 \times X\to X$ with finite fixed locus.  Given the above toric
construction, the $T_4$-action preserves the image of the zero section
$S\to X$ and the induced action $T_4 \times S\to S$ is compatible with
the $T_2$-action. More precisely, one has an exact sequence
\be\label{eq:torseq}
1 \to T_V \to T_4 \to T_2 \to 1
\ee
of algebraic tori, with $T_V \simeq \IC^\times \times \IC^\times$, and
the induced $T_4$ action on $S$ factors through the projection $T_4 \to
T_2$. In particular, the stabilizer of any closed point in $S$ coincides
with $T_V$.  Moreover, by construction, each line bundle $\LL_i$ has
a natural $T_4$-equivariant structure for $1\leq i\leq 2$.  Finally,
we will denote by $T \subset T_4$ the stabilizer of the Calabi--Yau
structure on $X$, which will be referred to as the Calabi--Yau torus.

\subsection{Coulomb branch and Fixed points} \label{sec:coulombfixed}

This section outlines our strategy for the construction of the Coulomb
branch partition function of topologically twisted ADHM theory on a
toric surface.

Physically, our intuition suggests that we can study our theory both in
the Coulomb branch and Higgs branch, and the computations should agree.
On the former, we first treat each vacua as isolated, and apply cluster
decomposition principle.  For a non-compact space, this is the whole
story.  For a compact space, we are
moreover instructed to integrate over the Coulomb branch, since there
is quantum mechanical tunneling between different vacua.

More concretely, we will assume that the partition function of
the topological $\grp{U}(k)$ gauge theory on a compact surface $S$
reduces to an integral over a moduli space $\mcou_k$, which at present
does not admit a rigorous mathematical construction.  Heuristically,
this moduli space closely resembles a master space \cite{Thaddeus94}.
By cluster decomposition, we expect
the partition function to be obtained by gluing local data defined in
the canonical affine charts on $S$ centered at the $T_2$ fixed points
$v\in \Delta_0(S)$.

The main working hypothesis is that the local data associated to any
vertex $v \in \Delta_0(S)$ will be constructed by equivariant integration
on moduli spaces of framed torsion-free sheaves on $\BP^1 \times \BP^1$
containing the toric chart centered at $v$ as the complement of the
normal crossing divisor
\[
\Delta_\infty:= \BP^1 \times \{ \infty\} \cup \{\infty\}\times \BP^1.
\]
With suitable framing conditions along $\Delta_\infty$, these moduli
spaces admit a $T_2 \times T_f$ torus action, where the $T_2$ action
is induced from the action on $S$, while  $T_f \coloneq (\IC^\times)^k$
is induced by the canonical action on the framing.

Moreover, the $T_2 \times T_f$-fixed points are classified by $k$-uples of
asymptotic 2d partitions. For completeness, an asymptotic $2d$ partition
$\tl$ is defined as a subset $\tl \subset \IZ_{\geq 0}\times \IZ_{\geq
0}$ satisfying the conditions:
\begin{itemize}
\item If $(i,j)\in \tl$ and $i\geq 1$, then $(i-1,j)\in \tl$.
\item If $(i,j)\in \tl$ and $j\geq 1$  then $(i,j-1)\in \tl$.
\item For sufficiently large $i$, the set  $\tl\cap (\{i\}\times \IZ)$ is finite
and has constant size $\ell_1\geq 0$,  independent of $i$.
\item For sufficiently large $j$, the set $\tl\cap (\IZ\times \{j\})$ is finite
and has constant size $\ell_2\geq 0$, independent of $j$.
\end{itemize} 
As opposed to finite 2d partitions, we do not require the total number
of points in $\tl$ to be finite. Geometrically, asymptotic 2d partitions
classify $T_2$-fixed points in the Hilbert scheme of ideal sheaves $\calI
\subset \calO_{\BP^1\times \BP^1}(-\ell_1,-\ell_2)$ so that the quotient
$\calO_{\BP^1\times \BP^1}(-\ell_1,-\ell_2)/\calI$ is zero-dimensional.

For ADHM gauge theory, the $T_2 \times T_f$-action is lifted to an action of
$T_4\times T_f$, where the action of $T_4$ is induced via the construction
of the gauge theory as an effective theory of $D$-branes supported on
the zero section $S\to X$. In particular, the subgroup $T_V \subset
T_4$ introduced in \zcref{eq:torseq} acts trivially on the instanton
moduli space, but nontrivially on the adjoint fields $B_1$,
$B_2$ associated to deformations in the normal directions.

Therefore, for each vertex $v\in \Delta_0(S)$, the local partition
function will consist of a  perturbative part, and a $K$-theoretic
instanton sum defined in terms of asymptotic 2d partitions.  In addition,
one has to provide a gluing algorithm of the local data, along edges $e
\in \Delta_1(S)$.  The detailed construction is explained below.

\subsection{Gluing algorithm}

In order to fix notation, for each vertex  $v \in \Delta_0(S)$ note that
the induced $T_4$-action on the cotangent space $T_v^*(X)$ is diagonal
with respect to the natural toric frame.  We will denote by $q_{v,i}$,
$1\leq i\leq 4$ the associated characters, choosing notation so that
$q_{v,1}$, $q_{v,2}$ will be associated to the action on $T_v^*(S)$,
while $q_{v,3}$, $q_{v,4}$ are associated to the action along the conormal
directions.  Hence $q_{v,3}$ and $q_{v,4}$ also encode the $T_4$ action
on the adjoint fields $B_1$, $B_2$ respectively.
For simplicity we will
write $q_{v,i} = q_i$, working in an arbitrary fixed local chart.
Note that the Calabi--Yau torus $T\subset T_4$ is defined by
\[
q_1q_2q_3q_4 =1. 
\]

For any $i=1,\dots,k$, let $\tK_i$ be a collection $\tl_i=(\tl_{i,v})_{v
\in \Delta_0(S)}$, where, for each vertex, $\tl_{i,v}$ is an asymptotic (2d)
partition.  To any partition $\tl$ we associate its character
\[
\ch_{\tl} (q_1, q_2) \coloneq \sum_{(a,b)\in \tl} q_1^{a-1} q_2^{b-1},
\]
which can have poles at $q_\mu \to 1$, since the partition is allowed to
be of infinite size.  Then we define equivariant fluxes
\[
\tn_{\mu,i} (\tl_i) \coloneq \lim_{q_\mu \to 1} (1-q_\mu) \ch_{\tl_{i,v}}(q_1,q_2) \in \BZ_{\geq 0}
\quad \mu=1,2.
\]
(The integer is also naturally the character of a one-dimensional
partition.)  Therefore, for any collection $\tK_i$ we obtain two
$\chi$-tuples of equivariant fluxes $\tn_{\mu,i}$, $\mu=1,2$, where
$\chi \coloneq \chi(S)$.  Gluing compatibility of $\tK_i$ means that
actually $\tn_{\mu,i}$ descend from a globally defined $\tn_{a,i}$,
for $a=1,\dots,|\Delta_{1}(S)|$.  We use them to define line bundles
$L_i = \otimes_{e \in \Delta_1} L_e^{\tn_{e,i}}$, for $i=1,\dots,k$.
Topologically, we have $c_1 (L_i) = \sum_{a,\al} \tn_{a,i} Q^\al_a c_1
(L_\al)$.  Let
\begin{equation}
\xi_{\al,i} \coloneq \int_{\BP^1_\al} c_1 (L_i) = \sum_a Q_\al^a \tn_{a,i}
\qquad \al=1,\dots,M
\quad i=1,\dots,k
\end{equation}
as well as
\begin{equation}
\ch_2(L_i) = \frac12 \sum_{e,e' \in \Delta_1} \tn_{i,e} \tn_{i,e'} D_e \cdot D_{e'}
\end{equation}
such that the sum $L = \oplus_{i=1}^k L_i$ has character $\ch (L) =
\sum_{i=1}^k \ch L_i$.  In other words, topologically $c_1 (L_i) =
\sum_\al \xi_{\al,i} c_1 (L_\al)$ and $\ch(L_i) = \exp c_1(L_i)$.

\begin{lemma} \label{lem:change}
The $T_d$-equivariant character 
\begin{equation} \label{eq:qtn}
q^{\tn} \coloneq \ch L_i = \exp \beta (w \ep)^t (L_v\tn)
\end{equation}
splits as the sum of a globally defined part
that depends on the \emph{equivariant} fluxes
and a local (i.e., vertex-dependent)
part that only depends on the topological values,
\begin{equation}
\beta^{-1} \log q^{\tn} = X - X_v
\end{equation}
where $X = \sum_{e\in\Delta_1} \ep_e \tn_{e}$ is related to the
Hamiltonian by replacing momenta with equivariant fluxes and $X_v =
\ep^t L_v^c Q^{-t}_v (Q\tn)$ is similarly related to $H_v$, by the
replacement $t \to Q\tn$.
\end{lemma}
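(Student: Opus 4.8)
The plan is to grant the first two equalities in \eqref{eq:qtn}, which are a bookkeeping statement about the toric line bundle $L_i$, and then to derive the splitting by a short manipulation in the character lattices $\BZ^N$ and $\BZ^M$. \textbf{Step 1: the equivariant fibre at $v$.} First I would record that $L_i = \otimes_{e\in\Delta_1(S)} L_e^{\tn_{e,i}}$ is an honest $T_d$-equivariant line bundle on the toric surface $S$, and that its restriction to the fixed point $v$ only sees the two boundary divisors through $v$, namely those labelled by $I_v$. Hence the fibre $L_i|_v$ is the $T_d$-character $\prod_{\mu=1}^{2} q_\mu^{(L_v\tn)_\mu}$, where the exponents are the entries of $L_v\tn$ and the tangent weights at $v$ are $q_\mu = \exp\bigl(\beta\sum_a w_{a,\mu}\ep_a\bigr)$ by the defining property of the local weights $w$. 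This is exactly $\beta^{-1}\log q^{\tn} = (w\ep)^t(L_v\tn)$, which reduces \zcref{lem:change} to the purely algebraic identity $(w\ep)^t(L_v\tn) = X - X_v$.

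\textbf{Step 2: the algebraic identity.} Regard $L_v$ and $L_v^c = 1 - L_v$ as the complementary idempotent coordinate projections of $\BZ^N$, so that $Q_v = L_v^c Q^t$ and, since $L_v^c$ is symmetric, $Q_v^t = Q L_v^c$. Since $w\ep$ is supported on $I_v$ we may replace $L_v\tn$ by $\tn$, and transposing $w = L_v - L_v Q^t Q_v^{-1} L_v^c$ gives $w^t = L_v - L_v^c Q_v^{-t} Q L_v$, so that
\[
\beta^{-1}\log q^{\tn} = \ep^t w^t\tn = \ep^t L_v\tn - \ep^t L_v^c Q_v^{-t} Q L_v\tn.
\]
On the other hand $X - X_v = \ep^t\tn - \ep^t L_v^c Q_v^{-t}(Q\tn)$, and splitting $\tn = L_v\tn + L_v^c\tn$ this becomes
\[
X - X_v = \ep^t L_v\tn - \ep^t L_v^c Q_v^{-t} Q L_v\tn + \bigl(\ep^t L_v^c\tn - \ep^t L_v^c Q_v^{-t} Q L_v^c\tn\bigr).
\]
Comparing with the previous display, the lemma reduces to the vanishing of the bracketed term, i.e.\ to $\ep^t L_v^c\tn = \ep^t L_v^c Q_v^{-t} Q L_v^c\tn$. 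This holds because $L_v^c\tn$ lies in the sublattice complementary to $I_v$, on which $Q$ restricts to $Q_v^t$; hence $L_v^c Q_v^{-t} Q L_v^c = L_v^c Q_v^{-t} Q_v^t L_v^c = L_v^c$ by idempotency. Here one uses invertibility of $Q_v$ on that sublattice, which is the standing assumption that the $T_2$-fixed locus of $S$ is finite.

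\textbf{Step 3: identifying the summands.} It then remains to match the two terms with the stated descriptions. The term $X = \ep^t\tn = \sum_{e\in\Delta_1(S)}\ep_e\tn_e$ is manifestly independent of $v$ and is the Hamiltonian $H = \sum_a p_a\ep_a$ with the momenta replaced by the equivariant fluxes, whereas $X_v = \ep^t L_v^c Q_v^{-t}(Q\tn)$ is the fixed-point Hamiltonian $H_v = \ep^t L_v^c Q_v^{-t}t$ under the substitution $t\mapsto Q\tn$, and it depends on $\tn$ only through $Q\tn = (\xi_{\al,i})_\al$, i.e.\ only through the topological data. This is the asserted splitting into a globally defined part (the equivariant fluxes) and a vertex-dependent part (the topological values).

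I expect the only genuinely delicate point to be Step 1: making precise, at the level of the surface $S$ alone, that the glued bundle $L_i$ carries exactly this $T_d$-equivariant fibre at $v$, since the ambient moduli space $\mcou_k$ is only heuristic at this stage. However $L_i$ is itself a bona fide toric line bundle on $S$, so this is a routine, if bookkeeping-heavy, computation with the boundary divisors in the affine chart centred at $v$; once it is in place, Steps 2 and 3 are purely mechanical.
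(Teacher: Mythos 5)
Your proposal is correct and follows essentially the same route as the paper's proof: verify that $\ch L_i$ at $v$ equals $\exp\beta(w\ep)^t(L_v\tn)$, then manipulate with the projections $L_v$, $L_v^c$ using the key identity $L_v^c Q_v^{-t}QL_v^c = L_v^c$ (which you, unlike the paper, actually justify via $Q_v^t = QL_v^c$). The only cosmetic difference is that you establish the consistency of the definition geometrically (only divisors through $v$ contribute to the fibre), whereas the paper does it algebraically via the quotient-torus relation $Q_v\gamma = L^c\ep$.
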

\begin{proof}
We work locally at $v \in \Delta_0$.  Let us first check that our
definition makes sense.  Since $T_d$ is the quotient of the big
torus by $T_Q$, we have $\ch(L_i) = \ep_a \tn_a - \gamma_\al \xi_\al$.
By recalling $\xi = Q\tn$ and $Q_v \gamma = L^c \ep$, we immediately get
$\tn^t (\ep-Q^t \gamma)= \tn^t (1-Q^t Q_v^{-1}L^c) \ep= \tn^t L(1-Q^t
Q_v^{-1}L^c) \ep$, and we thus recover $w$.

From the definition, we compute
\begin{equation}
\text{lhs} = \ep^t L_v\tn - \ep^t L_v^c Q^{-t}_v QL_v\tn
\end{equation}
Using the relations $L_v+L^c_v=1$ and $L_vL_v^c=0$, we get 
\begin{equation}
\text{lhs} = \ep^t \tn - \ep^t L_v^c \tn - \ep^t L_v^c Q^{-t}_v Q(1-L_v^c)\tn
\end{equation}
We claim that $L_v^c = L_v^c Q^{-t}_v Q L_v^c$
follows from the definition of $Q_v$.
This implies that
\begin{equation}
\text{lhs} = \ep^t \tn - \ep^t L_v^c Q^{-t}_v (Q\tn)
\end{equation}
where we recover the definition of $X$ and $X_v$.
\end{proof}

The renormalized $\tK$ then give $\chi\cdot k$-tuples of (finite size)
Young diagrams $\Lambda = (\vec{\lambda}_1, \dots, \vec{\lambda}_\chi)$,
with $\vec{\lambda}_v = (\lambda_{1,v}, \dots, \lambda_{k,v})$, as
follows: at a fixed point $v \in \Delta_0 (S)$, identifying a partition
$\lambda$ with its character
\begin{equation}
K_i = \ch \lambda_i
= \sum_{(a,b) \in \lambda_i} q_1^{a-1} q_2^{b-1}
\end{equation}
we introduce regularized partitions
\begin{equation} \label{eq:regK}
K_i = q^{-\tn_i} \left( \tK_i - \frac{1-q^{\tn_i}} {P} \right)
\end{equation}
One can check that $\lambda$ so defined is a finite-size partition
(Young diagram).
\begin{figure}[htb]
\centering
\begin{tikzpicture}
    \node[cell, minimum height=2cm, minimum width=6cm] at (0,0) {$n_1$};
    \node[cell, minimum height=6cm] at (0,0) {$n_2$};
    
    \node[cell] at (1,4) {$q_2^2$};
    \node[cell] at (1,3) {$q_2$};
    \node[cell] at (1,2) {$1$};  
    \node[cell] at (2,2) {$q_1$};
    \node[cell] at (2,3) {$q_1 q_2$};
    \node[cell] at (3,2) {$q_1^2$};
    \node[cell] at (4,2) {$q_1^3$};
\end{tikzpicture}
\caption{An example of renormalized $\tK$.
We have $\tn_1 = 1 + q_2$, $\tn_2=1$ and
$\lambda = 1 + q_1 + q_1^2 + q_1^3 + q_2 + q_1 q_2 + q_2^2$.
One can check that \zcref{eq:regK} is satisfied, since
$\tK = \frac{1+q_2}{1-q_1} + \frac{q_2^2}{1-q_2} + \lambda q_1 q_2^2$.
The case $d=2$ is special, as the renormalized partition
is still a partition (this is not true for $d>2$).}
\end{figure}
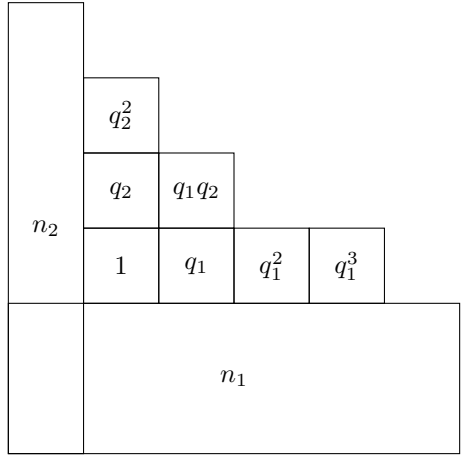

We conclude that tuples $\tK_i$ are in one-to-one correspondence with
pairs $(\lambda_i,\tn_i)$ of Young diagrams and equivariant fluxes.
The fixed point set $\mcou_k^T$ is thus a disjoint union over topological
sectors
\begin{equation}
\mcou^T_{k,ch_1,ch_2} = \left\{ (\tn,\Lambda) \mid
ch_{1,\al} = \sum_i \xi_{i,\al} \, \forall \al, \,
ch_2 = \ch_2(L) - \sum_{v} |\vec\lambda_{v}| \right\}
\end{equation}
\begin{remark}
Note that each of these sets has a finite number of elements (fixed
points), due to the fact that our starting point being $\tK$ implies that
$\tn$ are non-negative integers.  (Of course the differences $\tn_i -
\tn_j$ can be negative.)  Hence no regularization of the sum over $\tn$
is required.
\end{remark}

Using the relations
\begin{equation}
c_1 (L) c_1(S) = \sum_{i=1}^k \sum_{e,e' \in \Delta_1}
\tn_{e,i} D_e \cdot D_{e'}
\end{equation}
as well as
\begin{equation}
c_1 (L).\omega = \sum_{i=1}^k \sum_{e \in \Delta_1}
\tn_{e,i} \vol(D_e)
\end{equation}
we can evaluate the expression \zcref{eq:fug-pt} at $p\in\mcou_k^T$: we get
\begin{equation}
I_p = f_k (\xi) - \tau_2 \sum_{v=1}^\chi |\vec \lambda_v|
\end{equation}
where
\begin{equation} 
f_k (\xi) = \ch(L) \td(S) (\tau_2 + \omega)
\end{equation}
depends on $\tn_i$ only via $\xi_{i,\al}$.  We let $z_{cl} = \exp f_k$
(classical parts), so that $\exp I_p = \qq^{-|\Lambda|} \cdot z_{cl}$.

\subsection{Characters}

At a fixed point $p$, the pullback of the universal sheaf
$\cE$ via the inclusion decomposes as
\begin{equation}
\cE_v = \oplus_{i=1}^k \tilde \cE_i,
\end{equation}
and it has character
\begin{equation}
\ch \tilde \cE_i = \tb_i(1 - P \tK_i)
\end{equation}
where $\tb_i$ are (globally defined) Coulomb branch parameters,
i.e.\ characters of some framing space.
Plugging in the new $K$ from \zcref{eq:regK},
we see that
\begin{equation}
\tb_i (1 - P \tK_i) = q^{\tn_i} \tb_i (1 - P K_i)
\end{equation}
and if we define the shifted Coulomb parameters as
$b_i \coloneq \tb_i q^{\tn_i} = \tb_i \ch L_i$,
as well as $W = \sum_{i=1}^k b_i$ and $V = \sum_i b_i \ch \lambda_{v,i}$,
then we get
\begin{equation} \label{eq:univ}
\ch \cE_v = W - P V
\end{equation}
Let $b_{ij} \coloneq b_i / b_j$ for future reference.

\begin{remark}
Let $\calO = \otimes_{i=1}^k L_i$.
Replacing $\cE$ by $\cE' = \cE \otimes \calO^{-1/k}$,
has the effect of shifting Coulomb parameters in such a way
that adjoint products $b_{ij} = b_{ij}'$ do not change,
while fundamental ones do, and $\prod_i b'_i = \prod_i \tb_i$.
\end{remark}

The character of the virtual tangent space to $\mcou_k$ at $p$ is
computed by the Grothendieck--Riemann--Roch--Atiyah--Singer index theorem
(see e.g.\ the review by \textcite{Pestun:2016qko}). Taking into account the
contribution from the gauge field, adjoint matter and fundamental matter,
respectively, we get
\begin{equation} \label{eq:virtchar}
T_p =  - \chi(S, \cE\otimes\cE^*)
+ \chi(S, \cE \otimes \cE^* \otimes \LL_1)
+ \chi(S, \cE \otimes (N-M)^*)
\end{equation}
where $(\ )^*$ denotes the dual of $( \ )$, and $\chi(S,\ )$ is the
$K$-theoretic equivariant Euler characteristic.  A vector multiplet
contributes
\begin{equation} \label{eq:indvm}
- \chi(S, \cE\otimes\cE^*) =
- \int_S \ch(\cE \otimes \cE^*) \td_S
= \sum_{v \in \Delta_0} \left( T_{adj} - \frac{WW^*}{P^*} \right)
\end{equation}
where the fixed point virtual character
\begin{equation}
T_{adj} = W V^* + Q W^* V - P VV^*
\end{equation}
is computed via \zcref{eq:univ} and $\td_S = 1/P^*$ localized at $v$.
(For flat space, we follow standard notation \cite{Nekrasov:2015wsu}.)
\begin{remark}
For a $\grp{U}(1)$ theory, the adjoint part is independent of $b$,
hence of the flux.
\end{remark} 
Adjoint matter gives
\begin{equation} \label{eq:indad}
\int_S \ch(\cE \otimes \cE^* \otimes \LL_1) \td_S
= - \sum_{v \in \Delta_0} q_3 \left(T_{adj} - \frac{WW^*}{P^*} \right)
\end{equation}
\begin{remark}
In order to address the apparent disparity between $q_3$ and $q_4$
in the above expression, recall that our matter content is determined
via D-brane physics by the geometry of the Calabi--Yau fourfold $X$,
which is the total space of  the rank-two bundle $\LL_1 \oplus \LL_2 \to
S$. In particular, the adjoint fields $B_1$, $B_2$ encode deformations
in the normal directions to $S\subset X$ associated to $\LL_1$, $\LL_2$
respectively. Moreover, the local Calabi--Yau condition requires the
torus characters to satisfy $q_1 q_2 q_3 q_4=1$.  Hence contribution from
$(B_1,B_2)$ likewise satisfies $q_3 T_{adj} = (q_4 T_{adj})^*$.
\end{remark}
Fundamental matter contributes\footnote
{For our purposes, we do not need to twist masses by $K_S^{1/2}$,
since the reference value is arbitrary anyway, nor turn on background fluxes.}
\begin{equation} \label{eq:indfm}
\int_S \ch(\cE \otimes (N-M)^*) \td_S
= \sum_{v \in \Delta_0} \left( T_{f} + \frac{W(N-M)^*}{P^*} \right)
\end{equation}
where
\begin{equation}
T_f = V (M-N)^* Q
\end{equation}
and we identify a vector bundle with its character.

\begin{remark}
We are mostly interested in the case $n=1$, for which we can set the
mass $N=1$.  This is because only $\grp{PGL}(1)$ acts on the Hilbert
scheme of points of $\BC^2$.
\end{remark}

Overall the field content gives
\begin{equation}
T = (1-q_3) T_{adj} + T_f
\end{equation}
and the perturbative contribution is
\begin{equation}
T_{pert} = \frac {W(N-M)^* - W^*W (1-q_3)} {P^*}
\end{equation}
The overall character is thus $T_p = \sum_v (T_{pert} + T)$,
and it is a Laurent polynomial (since $S$ is compact), namely
$T_p \in \BZ[\q_1^{\pm1},\dots,\q_N^{\pm1},\tb_1^{\pm1},\dots,\tb_k^{\pm1},M^{\pm1}]$.

\begin{definition}
Let $\chi (\tn) \coloneq \chi (S,\calO (m) )$, where\footnote
{For a compact surface, $|\Delta_0| = \chi = N$.} $\calO (m) =
\otimes_{a=1}^\chi L_a^{\tn_a}$,
and $\chi_3(m) \coloneq \chi_S(\calO (m) \otimes \LL_1)$.
\end{definition}

We now derive an explicit formula for the perturbative part.
By equivariant localization, one obtains
\begin{equation}
\chi (\tn) =
\sum_{v \in \Delta_0} \frac{q^m} {P^*}~.
\end{equation}
Then the perturbative part reads
\begin{equation} \label{eq:pertpart}
\sum_v T_{pert} =
\sum_i \tb_i \chi(\tn_i) (1-M^*)
- \sum_{i \neq j} \tb_{ij} [\chi(\tn_i-\tn_j) - \chi_3(\tn_i-\tn_j)]
- k + k\chi_3(0)
\end{equation}
where $\chi_3 (0) = \chi(S,\LL_1)$, and we used the fact that
\[
\chi(0) = \chi_S(\calO_S) = 1
\]
since $S$ is toric.

\begin{remark} \label{rem:zeromodes} 
Let
\[
\chi_3 (0) = \chi_3^+(0) - \chi_3^-(0)
\]
where $\chi_3^\pm(0)$ are Laurent polynomials in the equivariant
variables with positive coefficients.  Then note that
if $\chi_3^+(0)=0$, then the partition function admits a
non-equivariant limit.  In particular, this will hold provided that the
line bundle $\LL_1$ satisfies the vanishing condition $H^k(\LL_1)=0$
for $k=0,2$.  By \zcref{prop:compC, rem:compcond}, if this is the case,
then the moduli space of $\PT_1$-stable pairs $SP_1(X,v)$ is compact
for all topological invariants.  Hence the associated partition function
of equivariant $K$-theoretic invariants admits a non-equivariant limit
as well.
\end{remark}

\subsection{Sanity check} \label{sec:vdim}

We want to check that the expected dimension as computed from
$\mcou_k$ matches the virtual dimension from PT theory.
\begin{lemma}
Assuming $\chi_S(\LL_1)=0$, 
the non-equivariant limit of the index $T_p$ in \zcref{eq:virtchar},
neglecting the $M$ dependent terms, gives $\int_S \ch(F) \td(S) - k^2$.
\end{lemma}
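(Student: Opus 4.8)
The plan is to evaluate the non-equivariant limit of each of the three terms in \eqref{eq:virtchar} by Hirzebruch--Riemann--Roch on the toric surface $S$, using only that $\ch(F)=\ch(\cE)$ is the Chern character of the universal sheaf, with $\rk\cE=k$, and that $\chi(S,\cO_S)=1$ since $S$ is toric. First I would pin down the two simplifications: ``neglecting the $M$-dependent terms'' deletes the fundamental contribution $-\chi(S,\cE\otimes M^*)$ from \eqref{eq:virtchar} while keeping $\chi(S,\cE\otimes N^*)$, and ``non-equivariant limit'' specializes all equivariant parameters to $1$, so that each equivariant $\chi(S,-)$ becomes an ordinary holomorphic Euler characteristic. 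In the case $n=1$ relevant here one has $N=\cO_S$, so the fundamental term contributes $\chi(S,\cE)=\int_S\ch(F)\td(S)$ by HRR.

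It then remains to show that the adjoint part $-\chi(S,\cE\otimes\cE^*)+\chi(S,\cE\otimes\cE^*\otimes\LL_1)$ has non-equivariant limit $-k^2$. By HRR this limit equals $\int_S\ch(\cE)\,\ch(\cE^*)\,\bigl(\ch(\LL_1)-1\bigr)\td(S)$. The key observation is that $\ch_1(\cE\otimes\cE^*)=\rk\cE\cdot\ch_1(\cE^*)+\ch_1(\cE)\cdot\rk\cE=0$, so $\ch(\cE)\,\ch(\cE^*)=k^2$ plus terms of cohomological degree $\ge 4$; since $\ch(\LL_1)-1$ has no degree-zero part, the only surviving contribution to the integral is $k^2\int_S\bigl(\ch(\LL_1)-1\bigr)\td(S)=k^2\bigl(\chi(S,\LL_1)-\chi(S,\cO_S)\bigr)$. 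Invoking $\chi(S,\cO_S)=1$ and the hypothesis $\chi_S(\LL_1)=0$, this equals $-k^2$.

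Adding the two contributions gives $\int_S\ch(F)\td(S)-k^2$, as claimed. I do not expect a genuine obstacle here; it is routine Chern-character bookkeeping on a surface. The one point to be handled carefully is the vanishing of $\ch_1(\cE\otimes\cE^*)$ — the cancellation of $c_1(\cE)$ against $c_1(\cE^*)$ — which is exactly what removes all cross terms and forces the coefficient of $\chi_S(\LL_1)-\chi_S(\cO_S)$ to be $k^2$; the only other subtlety is the bookkeeping one of retaining the $N$-fundamental term when one discards the $M$-dependent terms.
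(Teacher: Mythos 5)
Your proof is correct. It reaches the same two intermediate facts as the paper — the fundamental term contributes $\chi(S,\cE)=\int_S\ch(F)\td(S)$, and the vector-plus-adjoint combination contributes $k^2(\chi(S,\LL_1)-\chi(S,\cO_S))=-k^2$ — but by a more direct route. The paper works at a torus fixed point: it uses the split $T=(1-q_3)T_{adj}+T_f$ to observe that the instanton pieces of the first two terms of \eqref{eq:virtchar} cancel in the limit $q_3\to 1$, and then evaluates the surviving perturbative pieces by summing the line-bundle identity $\chi_S(M)+\chi_S(M^*)-\chi_S(M\otimes\LL_1)-\chi_S(M^*\otimes\LL_1)=2(\chi(0)-\chi_3(0))$ over the $k^2$ pairs of equivariant fluxes $\tn_{ij}$. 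You instead apply Hirzebruch--Riemann--Roch globally to $\cE\otimes\cE^*\otimes(\LL_1-\cO_S)$ and use $\ch_1(\cE\otimes\cE^*)=0$ to kill all cross terms; this is exactly the rank-$k$ version of the paper's rank-one identity (the $M\leftrightarrow M^*$ cancellation is the same cancellation of $c_1$ against $-c_1$), so the two arguments are mathematically equivalent. Your version has the minor advantage of not needing the fixed-point/perturbative decomposition or the toric structure beyond $\chi(S,\cO_S)=1$; the paper's version is phrased so as to reuse the explicit perturbative character \eqref{eq:pertpart} that is needed elsewhere. You also correctly flag the one bookkeeping subtlety (dropping $-\chi(S,\cE\otimes M^*)$ while retaining the $N$-term with $n=1$), which matches the paper's ``set $M=0$.''
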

\begin{proof}
We work with non-equivariant Euler characteristics, and set $M=0$.

The first two terms almost cancel, except for their perturbative parts,
which give (after splitting terms with $i=j$ and $i \neq j$)
\[
\sum_{i,j} (\chi(\tn_{ij}) - \chi_3 (\tn_{ij}))
= (\chi(0)-\chi_3(0)) (k+2\frac{k(k-1)}{2})
\]
where we used the fact that $\chi_S(M) + \chi_S(M^*) - \chi_S(M
\otimes \LL_1) - \chi_S(M^* \otimes \LL_1) = - \int c_1(\LL_1)
(c_1(\LL_1)+c_1(S)) = 2 (\chi(0)-\chi_3(0))$ for any line bundle $M$.
Since  $\chi_S(\LL_1)=0$ under the current assumptions, we get $k^2
\chi(0) = k^2 \td_S$, and $\int \td_S = 1$ for a toric surface.

Fundamental matter contributes $\int\ch(F)\td(S)$.
\end{proof}

\subsection{Partition function} \label{sec:part_fct}

\begin{definition}
We call $P \in \BZ[\q_1^{\pm1},\dots,\q_N^{\pm1},\tb^{\pm1},M^{\pm1}]$
\emph{movable} if the constant term is absent.  The plethystic map is
defined on movable Laurent polynomials, by sending products to sums.
Write it as a finite sum $P = \sum_i m_i x_i$ where $x_i$ are monomials
with unit coefficient and $m_i$ are integers.  Then
\begin{equation}
\hat a (P) \coloneq \prod_i \br{x_i}^{-m_i},
\quad \br{x} \coloneq x^{1/2} - x^{-1/2}
\end{equation}
In our conventions, it sends positive to denominator.
\end{definition}

After taking the plethystic of everything by the
Atiyah--Bott--Berline--Vergne machinery, we define the perturbative
\begin{equation}
z_{pert} = \hat a ( k + \sum_{v=1}^\chi T_{pert} )
\end{equation}
and instanton parts
\begin{equation}
z_{inst} = \prod_{v=1}^\chi \sum_{\vec \lambda_v} \qq^{- |\vec \lambda_v|} \hat a (T)
\end{equation}
and we get
\begin{equation} \label{eq:integrand}
\sum_{p \in \mcou_k^T} \hat a (T_p+k) \exp I_p
= \sum_{\tn} z_{cl} \cdot z_{pert} \cdot z_{inst}
\end{equation}
\begin{remark}
The inverse of group-like Vandermonde determinant
$\prod_{i \neq j} \hat a (-\tb_{ij})$,
which one would expect on physical grounds,
is actually canceled by some $+2$ zero modes,
so our index, which is the end result, does not see it.\footnote
{We thank Roman Mauch and Lorenzo Ruggeri for discussions on this point.}
\end{remark}

Using \zcref{lem:change} we make the change of variables
\begin{equation} \label{eq:anew}
\tb_i \eu^{\beta X_i} = \anew_i
\end{equation}
in the integrand \zcref{eq:integrand}, so that it depends on $\tn$ only
via $\xi$.  We can thus reduce the computation to contour integrals,
which we are summing over topological (as opposed to equivariant) fluxes,
subject to usual slope stability.  (In terms of $L = \oplus_i L_i$,
this means that the integrals $\sigma_i \coloneq \int_S c_1(L_i) \wedge
\omega = \sum_\al \xi_{i,\al} t_\al$ must satisfy $\sigma_1 -\sigma_2
\geq 0$ for $k=2$ and similar inequalities for higher rank, for $\xi$
to be in the set $\mathrm{st}(t)$.)  This gives
\begin{equation} \label{eq:zk}
Z_k^C = \sum_{\xi \in \BZ^{k \cdot M}_{\geq0}\cap \mathrm{st}(t)}
 \oint_{\calC_\ep (\xi)} d\anew\,
 z_{cl} \cdot z_{pert} \cdot z_{inst}
\end{equation}
with measure $d\anew \coloneq \prod_{i=1}^k \frac{d\anew_i}{\anew_i}$.
For the matter content described above, we take the integration
contour $\calC$ that encloses only poles from the perturbative part.
(The expression for the character $T_{pert}$ is known in closed form,
so it is simple to read off poles from it.)  In conclusion, we are led
to \zcref{conj:main}.

\section{Case study: local P2}\label{sec:localp2}

An interesting example where we can test our claim is the total space
of the sum of two line bundles $\cO(-2) \oplus \cO(-1)$ over $S=\BP^2$,
with Kähler parameter $t>0$.

\subsection{The geometric data}

The charge matrix for $X$ is $Q = (1,1,1,-2,-1)$, while the one for $S$
is obtained by truncating it to the first three columns.

There are three fixed points:
\begin{equation}
\begin{array}{c|c|c|c}
v \in \Delta_0 & \text{coord} & \text{symmetry} & H \\
\hline
v_1 & x_1 \neq 0 & \q_2/\q_1, \q_3/\q_1, \q_4 \q_1^2, \q_5 \q_1 & t \ep_1 \\
v_2 & x_2 \neq 0 & \q_1/\q_2, \q_3/\q_2, \q_4 \q_2^2, \q_5 \q_2 & t\ep_2 \\
v_3 & x_3 \neq 0 & \q_2/\q_3, \q_1/\q_3, \q_4 \q_3^2, \q_5 \q_3 & t\ep_3
\end{array}
\end{equation}
We can always e.g.\ set $\ep_3 = 0$, since the effective torus
acting on $\BP^2$ is two-dimensional.  Label toric divisors of $S$ as
$e_a \colon v_b \to v_c$ for $(abc)$ a permutation of $(123)$.
In this case $\xi_i = \sum_{e\in\Delta_1} \tn_{e,i}$ and $X_i =
\ep_1\tn_{1,i}+\ep_2\tn_{2,i}+\ep_3\tn_{3,i}$.  Then $q^{\tn}$ reads
(dropping the index $i$ for brevity)
\begin{equation}
\begin{array}{c|c}
v \in \Delta_0 & \beta^{-1} \log q^{\tn} \\
\hline
v_1 & (\ep_2-\ep_1)\tn_2 +(\ep_3-\ep_1)\tn_3 = X - \ep_1\xi \\
v_2 & (\ep_1-\ep_2)\tn_1 +(\ep_3-\ep_2)\tn_3 = X - \ep_2\xi \\
v_3 & (\ep_1-\ep_3)\tn_1 +(\ep_2-\ep_3)\tn_2 = X - \ep_3\xi
\end{array}
\end{equation}
\begin{remark}
From this we confirm that the integrand can only depend on $\tn_i$ via $\xi_i$.
For fixed $\xi_i$, we introduce new variables $\anew_i$ as in \zcref{eq:anew},
so that
\begin{equation}
b_i = \anew_i \eu^{-\beta \ep_v \xi_i}
\end{equation}
\end{remark}
For later convenience, let us define holomorphic Euler characteristics
\[
\chi' (\xi) \coloneq \sum_v \frac{\eu^{-\beta \ep_v \xi}}{P^*}
\]
as well as
\[
\chi'_3 (\xi) \coloneq \sum_v \frac{q_3 \eu^{-\beta \ep_v \xi}}{P^*}
\]
so that the perturbative contribution reads
\[
k + \sum_v T_{pert} = \sum_i \anew_i \chi'(\xi_i) (1-M^*)
- \sum_{i \neq j} \anew_{ij} \chi'(\xi_i-\xi_j)
+\sum_{i \neq j} \anew_{ij} \chi'_3(\xi_i-\xi_j) 
\]
using the fact that, for $\BP^2$, $\chi'(0)=1$ and $\chi_3'(0)=0$.

For the $\grp{U}(k)$ gauge theory, the flux and instanton numbers are
\begin{equation}
\xi = \int_{\BP^1} \ch_1(F) = \sum_{i=1}^k \xi_i
\end{equation}
\begin{equation}
\ch_2 = \int_{\BP^2} \ch_2(F) = \frac12\sum_i \xi_i^2
- \sum_v |\vec\lambda_v|
\end{equation}
where the last term is the instanton contribution.  
In order to make a connection with equation \eqref{eq:gaugeweight}, 
note that in the present case one has 
\[
\beta = m [H] 
\]
where 
\begin{equation}
m \coloneq \xi + \frac32 k
\end{equation}
and $[H]\in H_2(\BP^2, \IQ)$ is the hyperplane class. 
Moreover, 
\begin{equation}
n \coloneq \ch_2 +\frac32 \xi + k
\end{equation}
coincides with the Euler characteristic $\chi(\calF)$ in the same equation. 
In conclusion, \zcref{eq:gaugeweight} specializes to 
\[
e^{I} = \qq^n \QQ^m.
\]

Below we verify that the non-equivariant limit of $Z_k^C$ is in agreement
with generating functions $G^{PT_1}_k (\QQ, \qq)$ in \textcite[Section
6.6]{Bae:2024bpx} provided that $y =M$.

\subsection{U1 theory}
As a warm up, consider the $\grp{U}(1)$ theory.
All poles in the perturbative part
come from fundamental matter, since the adjoint part is absent:
\[
1 + \sum_v T_{pert} = \anew_1 \chi'(\xi_1) (1-M^*)
\]
As a heuristic check of our claims, it is easy to verify that the first
term ($\xi=0$, $ch_2=0$) matches equivariantly the face term of DT theory,
with $\lambda=1$.

With the help of some computer program, we can compute \zcref{eq:p2rk1}.

Due to the simplicity of $\grp U(1)$ theory, it may be possible to
get a closed form for $Z_1^C$, as a plethystic exponent, but we did not
investigate this.

\subsection{U2 theory}\label{sec:u2}

For $\grp{U}(2)$ theory, the story is richer.

Let us again run an equivariant check first.
The minimal values are
$(m,n)=(4,4)$ hence $\xi = 1$, $\ch_2 = \frac12$.
DT theory gives
\begin{equation}
- \sum_v \mu^* \frac{1+q_4}{P_{12}}
= - (\q_3 \q_5 \mu^* + \q_2 \q_5 \mu^* +\q_1 \q_5 \mu^* + \mu^*)
\end{equation}
It corresponds to $\tn = ( (1,0,0), (0,0,0) )$ and permutations,
and no instantons.
For $(\xi_1,\xi_2)=(1,0)$, the perturbative part reads
\begin{multline}
2+\sum_v T_{pert} = \anew_1 (\q_1^{-1}+\q_2^{-1}+\q_3^{-1}) + \anew_2
-M^* \anew_1 (\q_1^{-1}+\q_2^{-1}+\q_3^{-1}) - M^* \anew_2 \\
- \anew_{12} (\q_1^{-1}+\q_2^{-1}+\q_3^{-1}) + \anew_{21} \q_5^{-1}
\end{multline}
Taking the residue at $\anew_2=1$
and then $\anew_1=\q_5^{-1}$ we get the result.

Terms with $m=5$ correspond to $\xi=2$:
\begin{equation}
\begin{array}{c|c|c}
n & \ch_2 & (\xi_1, \xi_2, |\Lambda|) \\
\hline
7 & 2 & (2,0,0) \\
6 & 1 & (2,0,1), (1,1,0) \\
5 & 0 & (2,0,2), (1,1,1) \\
4 & -1 & (2,0,3), (1,1,2)
\end{array}
\end{equation}
For $\xi=(2,0)$, we take residues at
$(1,\q_1 \q_5^{-1}), (1,\q_2 \q_5^{-1}), (1,\q_3 \q_5^{-1}), (1,\q_4^{-1})$.

Terms with $m=6$ correspond to $\xi=3$:
\begin{equation}
\begin{array}{c|c|c}
n & \ch_2 & (\xi_1, \xi_2; |\Lambda|) \\
\hline
11 & \frac92 & (3,0;0) \T\\
10 & \frac72 & (3,0;1) \T\\
9  & \frac52 & (3,0;2), (2,1;0) \T\\
8  & \frac32 & (3,0;3), (2,1;1) \T\\
7  & \frac12 & (3,0;4), (2,1;2) \T
\end{array}
\end{equation}
For $\xi=(3,0)$ we take residues at
$(1, \q_1^3\q_2\q_3\q_4)$,
$(1, \q_1^2\q_2^2\q_3\q_4)$,
$(1, \q_1\q_2^3\q_3\q_4)$,
$(1, \q_1^2\q_2\q_3^2\q_4)$,
$(1, \q_1\q_2^2\q_3^2\q_4)$,
$(1, \q_1\q_2\q_3^3\q_4)$,
$(1, \q_3\q_4^{-1})$,
$(1, \q_2\q_4^{-1})$,
$(1, \q_1\q_4^{-1})$,
while for $\xi=(2,1)$ at $(\q_1,\q_1^2\q_2\q_3\q_4)$,
$(\q_2,\q_1\q_2^2\q_3\q_4)$, $(\q_3,\q_1\q_2\q_3^2\q_4)$.

With some more effort, we can compute some terms in the generating sum,
and take the non-equivariant limit: we get \zcref{eq:p2rk2}.  This matches
and extends results in the literature.

\printbibliography

\end{document}